   \def\MR#1{}
\renewcommand*{\backref}[1]{}
\renewcommand*{\backrefalt}[4]{
  \ifcase #1
  [No citations.]
  \or [#2]
  \else [#2]
  \fi }
\let\c@equation\c@subsection
\numberwithin{equation}{section}
\let\c@figure\c@equation
\numberwithin{figure}{section}
\theoremstyle{plain}
\newtheorem{theorem}[equation]{Theorem}
\newtheorem{lemma}[equation]{Lemma}
\newtheorem{proposition}[equation]{Proposition}
\theoremstyle{definition}
\newtheorem{definition/}[equation]{Definition}
\newtheorem{example/}[equation]{Example}
\newtheorem{fact/}[equation]{Fact}
\newtheorem{question/}[equation]{Question}
\newtheorem*{question*}{Question}
\newtheorem*{answer*}{Answer}
\newtheorem*{application*}{Application}
\newenvironment{definition}
  {%
   \pushQED{\qed}\begin{definition/}}
  {\popQED\end{definition/}}
\newenvironment{fact}
  {%
   \pushQED{\qed}\begin{fact/}}
  {\popQED\end{fact/}}
\theoremstyle{remark}
\newtheorem{remark/}[equation]{Remark}
\newtheorem*{remark*}{Remark}
\newtheorem{case}{Case}
\newtheorem*{case*}{Case}
\newtheorem*{claim*}{Claim}
\newenvironment{remark}
  {%
   \pushQED{\qed}\begin{remark/}}
  {\popQED\end{remark/}}
\newcommand{\refsec}[1]{Section~\ref{Sec:#1}}
\newcommand{\refthm}[1]{Theorem~\ref{Thm:#1}}
\newcommand{\reflem}[1]{Lemma~\ref{Lem:#1}}
\newcommand{\refprop}[1]{Proposition~\ref{Prop:#1}}
\newcommand{\reffac}[1]{Fact~\ref{Fac:#1}}
\newcommand{\reffig}[1]{Figure~\ref{Fig:#1}}
\newcommand{\refdef}[1]{Definition~\ref{Def:#1}}
\newcommand{\refcase}[1]{Case~\ref{Case:#1}}
\newcommand{\fakeenv}{} 
\newenvironment{restate}[2]  
{
 \renewcommand{\fakeenv}{#2} 
 \theoremstyle{plain}
 \newtheorem*{\fakeenv}{#1~\ref{#2}} 
 \begin{\fakeenv}
}
{
 \end{\fakeenv}
}
\newcommand{\from}{\colon} 
\newcommand{\cross}{\times}
\newcommand{\thsup}{\textrm{th}}
\newcommand{\RR}{{\mathbb{R}}}
\newcommand{\ZZ}{{\mathbb{Z}}}
\newcommand{\calB}{{\mathcal{B}}}
\newcommand{\calF}{{\mathcal{F}}}
\newcommand{\calH}{{\mathcal{H}}}
\newcommand{\calP}{{\mathcal{P}}}
\newcommand{\calT}{{\mathcal{T}}}
\newcommand{\SO}{\operatorname{SO}} 
\newcommand{\RP}{\mathbb{RP}} 
\newcommand{\orb}{{\operatorname{orb}}}
\newcommand{\bdy}{\partial}
\newcommand{\isom}{\cong}
\newcommand{\homeo}{\mathrel{\cong}} 
\newcommand{\group}[2]{{\langle #1 \st #2 \rangle}}
\newcommand{\cover}[1]{\widetilde{#1}}
\newcommand{\NP}{\textsc{NP}}
\newcommand{\FNP}{\textsc{FNP}}
\newcommand{\interior}{{\mathrm{int}\:}}
\newcommand{\closure}{\operatorname{closure}}
\newcommand{\cut}{\backslash\backslash}
\newcommand{\subgp}[1]{{\langle #1 \rangle}}
\newcommand{\twist}{\mathrel{%
    \stackrel{\sim}{\smash{\times}\rule{0pt}{0.6ex}}
    }} 
\newcommand{\st}{\mathbin{\mid}} 
\newcommand{\connect}{\#} 
\begin{document}

\title{Recognising elliptic manifolds}

\author[Lackenby]{Marc Lackenby}
\address{\hskip-\parindent
  Mathematical Institute\\
  University of Oxford\\
  Oxford OX2 6GG, United Kingdom}
\email{lackenby@maths.ox.ac.uk}

\author[Schleimer]{Saul Schleimer}
\address{\hskip-\parindent
  Mathematics Institute\\
  University of Warwick\\
  Coventry CV4 7AL, United Kingdom}
\email{s.schleimer@warwick.ac.uk}

\thanks{This work is in the public domain.}

\date{\today}

\begin{abstract}
We show that the problem of deciding whether a closed three-manifold admits an elliptic structure lies in \NP.
Furthermore, determining the homeomorphism type of an elliptic manifold lies in the complexity class \FNP.
These are both consequences of the following result.
Suppose that $M$ is a lens space which is neither $\RP^3$ nor a prism manifold.
Suppose that $\calT$ is a triangulation of $M$.
Then there is a loop, in the one-skeleton of the $86^\thsup$ iterated barycentric subdivision of $\calT$, whose simplicial neighbourhood is a Heegaard solid torus for $M$.
\end{abstract}






\maketitle

\section{Introduction}
\label{Sec:Intro}

Compact orientable three-manifolds have been classified in the following sense:
there are algorithms that, given two such manifolds, determine if they are homeomorphic~\cite{Kuperberg19, ScottShort, AschenbrennerFriedlWilton}.
Kuperberg~\cite[Theorem~1.2]{Kuperberg19} has further shown that this problem is no worse than elementary recursive.
Beyond this, very little is known about the computational complexity of the homeomorphism problem.

All known solutions rely on the geometrisation theorem, due to Perelman~\cite{Perelman1, Perelman2, Perelman3}.
This motivates the following closely related problem:
given a compact orientable three-manifold, determine if it admits one of the eight Thurston geometries.
This problem has an exponential-time solution using normal surface theory and, again, geometrisation.
See~\cite[Section~8.2]{Kuperberg19} for a closely related discussion.

Here we give a much better upper bound in an important special case.
Recall that the \emph{elliptic} manifolds are those admitting spherical geometry.
The decision problem \textsc{Elliptic manifold} takes as input a triangulation $\calT$, of a compact connected three-manifold $M$, and asks if $M$ is elliptic.

\begin{theorem}
\label{Thm:Elliptic}
The problem \textsc{Elliptic manifold} lies in \NP.
\end{theorem}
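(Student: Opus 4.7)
The plan is to exploit the three-way classification of elliptic manifolds (\refprop{Names}) to present three separate certificates, one per family. The prover declares which family $M$ belongs to — lens space, prism manifold, or platonic manifold — and supplies the corresponding data. For $M$ a lens space other than $\RP^3$, the certificate is the core curve $C$ guaranteed by \refthm{LensSpaceCore}: a simplicial loop in $\calT^{(86)}$ whose simplicial regular neighbourhood is one solid torus of a genus-one Heegaard splitting. After performing the 86 barycentric subdivisions and removing a simplicial neighbourhood of $C$, the verifier invokes Ivanov's NP certificate for solid torus recognition on the complement, exhibiting $M$ as a union of two solid tori and hence as either $S^2 \cross S^1$ or a lens space. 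The former is excluded by checking that $|H_1(M)|$ is finite, which can be computed in polynomial time from $\calT$. For the exceptional case $M \homeo \RP^3$ one proceeds directly, for instance by certifying via the universal cover $S^3$ using the three-sphere recognition certificate of Schleimer or Ivanov.

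For $M$ a prism manifold or platonic manifold, we use \refprop{PrismCriterion} and \refprop{PlatonicCriterion}. In each case the certificate consists of: (i) a triangulation $\widetilde{\calT}$ of a cover $\widetilde{M}$, of degree $2$ in the prism case and $d \in \{12, 24, 60\}$ in the platonic case; (ii) a free simplicial action on $\widetilde{\calT}$ of the appropriate group $\Gamma$ (cyclic of order two, or one of $A_4$, $S_4$, $A_5$); (iii) a combinatorial identification of the quotient $\widetilde{\calT}/\Gamma$ with $\calT$; and (iv) the lens space certificate above applied to $\widetilde{M}$. For the prism case the verifier additionally confirms that the fixed subgroup of $H_1(\widetilde{M})$ under the involution has the order demanded by \refprop{PrismCriterion} — a straightforward homological computation from $\widetilde{\calT}$. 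For the platonic case the verifier checks the isomorphism type of $\Gamma$ and reads off the Seifert invariants from $|H_1(M)|$ using the polynomial-time procedure of \refprop{platonicHomeo}.

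The total certificate size is polynomial in $|\calT|$: the cover has at most $60|\calT|$ tetrahedra, the $86$-fold barycentric subdivision multiplies by a fixed constant, and the solid-torus subcertificate is polynomial by Ivanov. The main obstacle — and precisely why such a polynomial-size certificate exists at all — is \refthm{LensSpaceCore}, which forces the lens-space core curve into a subdivision of bounded depth; without it the recursive step (iv) for prism and platonic manifolds would lack a polynomial witness, and the whole scheme would collapse. A secondary concern is ensuring that the declared family actually matches $M$ rather than being merely internally consistent, but this is handled by the intrinsic criteria in \refprop{PrismCriterion} and \refprop{PlatonicCriterion}, which make the family determination an unambiguous consequence of the homological and covering data supplied.
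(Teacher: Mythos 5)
Your proposal follows essentially the same route as the paper: classify elliptic manifolds via \refprop{Names}, certify lens spaces by exhibiting a short simplicial core curve (\refthm{LensSpaceCore}) and invoking solid-torus recognition on its exterior, and certify prism and platonic manifolds via a bounded-degree lens-space cover equipped with a free simplicial deck-group action, checked against \refprop{PrismCriterion} and \refprop{PlatonicCriterion}. The one imprecision is that \refthm{LensSpaceCore} excludes prism lens spaces $L(4p', 2p' \pm 1)$ in addition to $\RP^3$, so such manifolds must be routed through the prism-manifold branch (or certified as in the paper via \refthm{LensSpaceCurve} with exterior $K^2 \twist I$) --- an easy fix, since the prover chooses which family to declare.
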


If a three-manifold is elliptic, then it is reasonable to ask which elliptic manifold it is.
This is a \emph{function problem} as the desired output is more complicated than simply ``yes'' or ``no''.
The problem \textsc{Naming elliptic} takes as input a triangulation of a compact connected three-manifold, which is promised to admit an elliptic structure, and requires as output the manifold's Seifert data.
Some elliptic three-manifolds admit more than one Seifert fibration;
in this case, the output is permitted to be the data for any of these.

\begin{theorem}
\label{Thm:NamingElliptic}
The problem \textsc{Naming elliptic} lies in \FNP.
\end{theorem}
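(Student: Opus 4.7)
The plan is to reduce \textsc{Naming elliptic} to \refthm{Elliptic} together with Kuperberg's polynomial-time algorithm for \textsc{Naming lens space}~\cite[Theorem~1.1]{Kuperberg18}. Given a triangulation $\calT$ of the promised elliptic manifold $M$, the \FNP~certificate will include a certificate of ellipticity along with data identifying which of the three classes of \refprop{Names} the manifold belongs to: lens space, prism manifold, or platonic manifold of a specified type $\calP \in \{A_4, S_4, A_5\}$.

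First, in the lens-space case, the verifier simply runs Kuperberg's algorithm on $\calT$ to compute coefficients $(p,q)$ with $L(p,q) \homeo M$, producing the Seifert data directly. Second, in the prism case, the certificate exhibits a double cover of $M$: a triangulation $\cover{\calT}$, a fixed-point-free simplicial involution $\phi$ of $\cover{\calT}$, and a simplicial identification $\cover{\calT}/\phi \homeo \calT$, of total size at most $2|\calT|$. The verifier applies Kuperberg's algorithm to name $\cover{M}$ as a lens space $L(a,b)$, computes the order of the subgroup of $H_1(L(a,b)) \isom \ZZ_a$ fixed by the deck involution (an elementary polynomial-time computation since $H_1$ is explicit), and invokes \refprop{PrismCriterion} to recover the prism coefficients $(p,q)$. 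Third, in the platonic case, the certificate presents a $|\calP|$-fold cover of $M$: a triangulation $\cover{\calT}$ of size at most $60|\calT|$, a free simplicial $\calP$-action, and a simplicial identification $\cover{\calT}/\calP \homeo \calT$. The verifier names $\cover{M}$ as a lens space via Kuperberg, computes $|H_1(M)|$ from $\calT$ using Smith normal form in polynomial time, and applies \refprop{PlatonicCriterion} together with the polynomial-time procedure of \refprop{platonicHomeo} to extract the Seifert invariants $(q, q_1/p_1, q_2/p_2, q_3/p_3)$.

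The main obstacle is ensuring that these bounded-degree covers admit polynomial-size presentations and can be checked in polynomial time. By \refprop{PlatonicCriterion} and \reflem{PrismCover}, every platonic or prism elliptic manifold does admit a lens-space cover of degree at most $60$; a $d$-fold regular cover can be encoded by choosing $d$ lifts per tetrahedron of $\calT$ together with compatible gluing permutations, yielding a triangulation with exactly $d|\calT|$ tetrahedra. Freeness of the deck action and the quotient identification with $\calT$ are locally verifiable. Combined with Kuperberg's polynomial-time lens-space naming algorithm and standard polynomial-time computations of homology and of fixed subgroups of $\ZZ_a$, this yields the required \FNP~algorithm.
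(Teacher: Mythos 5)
Your high‑level plan — use Kuperberg's polynomial‑time \textsc{Naming lens space} algorithm and a bounded‑degree cover to reduce the prism and platonic cases to the lens‑space case — is exactly the alternative route the authors themselves acknowledge in \refsec{Intro} (after \refthm{NamingElliptic}, ``His work, together with \refthm{Elliptic} and parts of \refsec{CertificateElliptic}, \dots''). So the direction is sound, and the covering arguments via \refprop{PrismCriterion} and \refprop{PlatonicCriterion} are correct.

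However, there is a genuine gap in all three branches: Kuperberg's result is a \emph{promise} algorithm. It is only guaranteed to output the correct coefficients when the triangulation it receives really is a lens space; when the promise fails, its output is unspecified. In \FNP\ the verifier must never accept a certificate that leads it to emit a wrong answer on a valid input. In your scheme, nothing stops a prover from labelling a prism or platonic manifold as ``lens space'' (or from handing over a cover $\cover{M}$ that is not actually a lens space); the verifier would then feed Kuperberg's algorithm a non‑lens‑space, trust whatever it returns, and output incorrect Seifert data. A ``certificate of ellipticity'' on its own does not let the verifier independently confirm which of the three classes in \refprop{Names} the manifold lies in, nor that the exhibited cover is a lens space. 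The paper closes precisely this hole by including in the certificate a simplicial core curve from \refthm{LensSpaceCore}/\refthm{LensSpaceCurve} together with an \NP\ certificate that its exterior is a solid torus (or a $K^2\twist I$), thereby certifying lens‑space‑ness of $M$ (or of $\cover M$) directly; once that data is present, the coefficients can already be read off from the one‑cycles $\lambda, \mu$ and a homology computation, so the call to Kuperberg becomes redundant rather than load‑bearing. Your proof would be repaired by adding, in each branch, a certificate that the relevant manifold ($M$ itself or its cover) is a lens space — and the only known polynomial‑size such certificate is the core‑curve machinery that is the technical heart of the paper.
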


One precursor to \refthm{Elliptic} is that \textsc{Three-sphere recognition} lies in \NP.
The first proof of this~\cite[Theorem~15.1]{Schleimer11} uses Casson's technique of \emph{crushing} normal two-spheres as well as Rubinstein's \emph{sweep-outs}, derived from almost normal two-spheres.
There is another proof, due to Ivanov~\cite[Theorem~2]{Ivanov08}, that again uses crushing but avoids the machinery of sweep-outs.
Ivanov also shows that the problem of recognising the solid torus lies in \NP.

Our results rely on this prior work in a crucial but non-obvious fashion.
By geometrisation, a three-manifold $M$ is elliptic if and only if $M$ is finitely covered by the three-sphere.
Thus, one might hope to prove Theorems~\ref{Thm:Elliptic} and~\ref{Thm:NamingElliptic} by exhibiting such a finite cover together with a certificate that the cover is $S^3$.
However, consider the following examples.
Let $F_n$ denote the $n^\thsup$ Fibonacci number.
There is a triangulation of the lens space $L(F_n, F_{n-1})$ with $n$ tetrahedra.
The degree of the universal covering is $F_n$;
since this grows exponentially in $n$ it cannot be used directly in an \NP~certificate.
See~\cite[Section~2]{Kuperberg18} for many more examples of this phenomenon.

Instead we use the following:
any elliptic three-manifold has a cover, of degree at most sixty, which is a lens space.
Thus Theorems~\ref{Thm:Elliptic} and~\ref{Thm:NamingElliptic} reduce, respectively, to the problem of deciding whether a three-manifold is a lens space and, if so, naming it.

Our approach to certifying lens spaces is conceptually simple.
Suppose that $U \homeo V \homeo S^1 \cross D^2$ are solid tori.
A lens space $M$ can be obtained by gluing $U$ and $V$ along their boundaries.
This gives a \emph{Heegaard splitting} for $M$.
This decomposition of $M$ is unique up to ambient isotopy~\cite[Th\'eor\`eme~1]{BonahonOtal83}.
We call $S^1 \cross \{0\} \subset S^1 \cross D^2$ a \emph{core curve} for the solid torus.
We say that a simple closed curve $\gamma \subset M$ is a \emph{core curve} for $M$ if it is isotopic to a core curve for $U$ or for $V$.
We certify that $M$ is a lens space by exhibiting such a core curve.

This approach is inspired by the results of~\cite{Lackenby:CoreCurves}.
There Lackenby shows that, for any handle structure of a solid torus satisfying some natural conditions, there is a core curve that lies nicely with respect to the handles.
Specifically, the curve lies within the union of the zero- and one-handles and its intersection with each such handle is one of finitely many types.
This list of types is universal, in the sense that it does not depend on the handle structure.
The handle structure must satisfy some hypotheses, but these hold for any handle structure that is dual to a triangulation.
Using~\cite[Theorem~4.2]{Lackenby:CoreCurves} we give an explicit bound on the ``combinatorial length'' of the core curve.
For a triangulation $\calT$ and positive integer $n$, we let $\calT^{(n)}$ denote the triangulation obtained from $\calT$ by performing barycentric subdivision $n$ times.

\begin{theorem}
\label{Thm:DerivedSolidTorus}
Suppose that $\calT$ is a triangulation of the solid torus $M$.
Then $M$ contains a core curve that is a subcomplex of $\calT^{(51)}$.
\end{theorem}

Using this we prove the following technical result.

\begin{theorem}
\label{Thm:LensSpaceCurve}
Suppose that $M$ is a lens space other than $\RP^3$.
Suppose that $\calT$ is a triangulation of $M$.
Then there is a simple closed curve $C$ that is a subcomplex of $\calT^{(86)}$,
such that the exterior of $C$ is either a solid torus or a twisted $I$--bundle over a Klein bottle.
\end{theorem}

This is proved by placing a Heegaard torus $S$ into almost normal form in the triangulation $\calT$, using Stocking's work~\cite[Theorem~1]{Stocking}.
We then cut along this torus to get two solid tori, which inherit handle structures $\calH_1$ and $\calH_2$.
We could apply \refthm{DerivedSolidTorus} to each of these, and we would then obtain core curves of the solid tori.
However, their intersection with each tetrahedron of $\calT$ would not necessarily be of the required form.
In particular, the intersection between each of these curves and any tetrahedron of $\calT$ would not be in one of finitely many configurations; 
thus there would be no way of showing that it lay in $\calT^{(86)}$.
The reason for this is that each tetrahedron of $\calT$ may contain many handles of $\calH_1$ and $\calH_2$.
However, in this situation, all but a bounded number of these handles would lie between normally parallel triangles and squares of $S$.
Hence, they lie in the \emph{parallelity bundle} for $\calH_1$ or $\calH_2$.
(The parallelity bundle was first introduced by Kneser~\cite{Kneser28};
we follow the treatment given in~\cite{Lackenby:Composite}.)
The strategy behind \refthm{LensSpaceCurve} is to ensure that one of the core curves does not intersect these handles by using the results from~\cite{Lackenby:CoreCurves}.
It then intersects each tetrahedron of $\calT$ in one of finitely many types, and with some work, we show that it is in fact simplicial in $\calT^{(86)}$.

Using Theorems~\ref{Thm:DerivedSolidTorus} and~\ref{Thm:LensSpaceCurve} we prove our main technical result.

\begin{theorem}
\label{Thm:LensSpaceCore}
Suppose that $M$ is a lens space which is neither a prism manifold nor a copy of $\RP^3$.
Suppose that $\calT$ is a triangulation of $M$.
Then the iterated barycentric subdivision $\calT^{(86)}$ contains a core curve of $M$ in its one-skeleton.
Furthermore, $\calT^{(139)}$ contains in its one-skeleton the union of the two core curves.
\end{theorem}

\subsection{Other work}

We announced Theorems~\ref{Thm:Elliptic} and~\ref{Thm:NamingElliptic} in 2012, at Oberwolfach~\cite{LackenbySchleimer12}.
Motivated by this, Kuperberg~\cite[Theorem~1.1]{Kuperberg18} showed that
the \emph{function promise problem} \textsc{Naming lens space} has a polynomial-time solution.
That is, there is a polynomial-time algorithm that,
given a triangulated lens space $M$, determines its lens space coefficients.
His work, together with \refthm{Elliptic} and parts of \refsec{CertificateElliptic}, can be used to give another proof of \refthm{NamingElliptic}.

The work of Haraway and Hoffman~\cite{HarawayHoffman19} is also relevant here.
In particular, in \refsec{IBundles}, we rely upon their result~\cite[Theorem~3.6]{HarawayHoffman19} that the decision problems \textsc{Recognising $T^2 \times I$} and \textsc{Recognising $K^2 \twist I$} lie in \NP.

\subsection{Outline of paper}

For us, all manifolds are given via a triangulation;
we work in the PL category throughout. 

In \refsec{LensPrism}, we remind the reader of some elementary facts about lens spaces and \emph{prism manifolds}.
The lens spaces which are also prism manifolds are exceptional cases in our analysis (as can be seen in the statement of \refthm{LensSpaceCore}, for example).
In \refsec{NormalAlmostNormal}, we recall the definition of an (almost) normal surface in a triangulated three-manifold.
In \refsec{HandleStructures}, we discuss handle structures for three-manifolds.
In \refsec{SolidTori}, we give the background needed to state~\cite[Theorem~4.2]{Lackenby:CoreCurves}.
This result places the core curve of a solid torus into a controlled position with respect to a handle structure for the solid torus.
In \refsec{Barycentric}, we translate this back to triangulations.
In particular, we prove \refthm{DerivedSolidTorus}.

In \refsec{NicelyEmbedded}, we say what it means for one three-manifold to be \emph{nicely embedded} into another.
Our eventual goal is to show that one of the Heegaard solid tori of the lens space $M$ is nicely embedded within the triangulation $\calT$ of $M$.
We show that, in this situation, the core curve of this solid torus can be arranged to be simplicial in an iterated barycentric subdivision of $\calT$.

In \refsec{Parallelity}, we recall the notion of parallelity bundles in a handle structure.
In addition, we also discuss \emph{generalised} parallelity bundles, also from \cite{Lackenby:Composite}, where it was shown that such bundles may be chosen to have incompressible horizontal boundary~\cite[Proposition~5.6]{Lackenby:Composite}.
In our situation, this implies that the generalised parallelity bundle has horizontal boundary being a collection of annuli and discs lying in the Heegaard torus.
In \refsec{ProofMain}, we bring these results together and prove Theorems~\ref{Thm:LensSpaceCore} and~\ref{Thm:LensSpaceCurve}.
This section may be viewed as the heart of the paper.

In \refsec{Elliptic}, we recall the classification of elliptic three-manifolds into lens spaces, prism manifolds, and \emph{platonic} manifolds.
We show that any platonic manifold has a cover, of degree at most sixty, which is a lens space.
We also show how its Seifert data can be extracted from the cover and from the homology of the manifold.
In \refsec{CertificateElliptic}, we certify elliptic manifolds, thereby completing the proof of Theorems~\ref{Thm:Elliptic} and~\ref{Thm:NamingElliptic}.

\subsection{The role of geometrisation in this paper}
\label{Sec:Geometrisation}

Very little of this paper relies on the proof of Thurston's geometrisation conjecture by Perelman \cite{Perelman1, Perelman2, Perelman3}. 
The proofs of Theorems~\ref{Thm:DerivedSolidTorus},~\ref{Thm:LensSpaceCurve} and~\ref{Thm:LensSpaceCore} make no appeal to geometrisation. 
As a result, our proof that lens space recognition is in NP also does not rely on geometrisation.
As explained above, we certify other elliptic manifolds by exhibiting a cover that is a lens space. 
The fact that any manifold covered by a lens space is elliptic seems to rely on the resolution of the spherical space form conjecture, which was a consequence of geometrisation. 
However, the certification of prism manifolds can avoid the use of geometrisation by appealing to earlier results on the spherical space form conjecture by Livesay~\cite{Livesay60} and Myers~\cite{Myers81}.
However, the certification of platonic manifolds \emph{does} seem to rely on geometrisation; 
see the proof of \refprop{PlatonicCriterion}.

\subsection{Acknowledgements}

We thank the referee for their detailed and insightful comments. 
We also thank the organisers of the conference on Geometric Topology of Knots at the University of Pisa, where this work was initiated.
ML was partially supported by the Engineering and Physical Sciences Research Council (grant number EP/Y004256/1).

\section{Lens spaces and prism manifolds}
\label{Sec:LensPrism}


In this section, we gather a few facts about lens spaces and prism manifolds.

Fix an orientation of the two-torus $T$.
Suppose that $\lambda$ and $\mu$ are simple closed oriented curves in $T$.
We write $\lambda \cdot \mu$ for the algebraic intersection number of $\lambda$ and $\mu$.
If $\lambda \cdot \mu = 1$ then we call the ordered pair $\subgp{\lambda, \mu}$ a \emph{framing} of $T$.
In this case we may isotope $\lambda$ and $\mu$ so that $x = \lambda \cap \mu$ is a single point.
This done, $\lambda$ and $\mu$ generate $\pi_1(T, x) \isom \ZZ^2$.




Thus, for any simple closed oriented essential curve $\alpha$ in $T$ we may write $\alpha = p\lambda + q\mu$, with $p$ and $q$ coprime.
Note that $\lambda \cdot \alpha = q$ and $\alpha \cdot \mu = p$.
We say that $\alpha$ has \emph{slope} $q/p$.
Note that if $\beta$ has slope $s/r$ then $\alpha \cdot \beta = \pm (ps - qr)$.
We say that $\alpha$ and $\beta$ are \emph{Farey neighbours} if $\alpha \cdot \beta = \pm 1$.

Suppose $U = D^2 \cross S^1$ is a solid torus.
Fix $x \in \bdy D^2$ and $y \in S^1$.
The boundary $T = \bdy U$ has a framing coming from
taking $\lambda = \{x\} \cross S^1$ and $\mu = \bdy D^2 \cross \{y\}$.

\begin{definition}
\label{Def:Lens}
A three-manifold $M$, obtained by gluing a pair of solid tori $U$ and $V$ via a homeomorphism of their boundaries, is called a \emph{lens space} if it has finite fundamental group.
\end{definition}

Under this definition $S^2 \cross S^1$ is not a lens space;
we use this convention because we are here interested in elliptic manifolds.
We use $L(p, q)$ to denote the lens space obtained by attaching the meridian slope
in $\bdy V$ to the slope $q/p$ in $\bdy U$.
We call $p$ and $q$ \emph{coefficients} for the lens space.
We now record several facts.

\begin{fact}
\label{Fac:Fund}
$\pi_1(L(p, q)) \isom \ZZ/p\ZZ$.
\end{fact}

Note for any $q$ we have $L(1, q) \homeo S^3$.

\begin{fact}
\label{Fac:Homeo}
\cite{Reidemeister35}
$L(p', q')$ is homeomorphic to $L(p, q)$ if and only if $|p'| = |p|$ and $q' = \pm q^{\pm 1} \mod p$.
\end{fact}

\begin{fact}
\label{Fac:Cover}
The double cover of $L(2p, q)$ is $L(p, q)$.
\end{fact}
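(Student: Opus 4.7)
\textbf{Proof plan for Fact~\ref{Fac:Cover}.}
The plan is to take the unique connected double cover of $L(2p,q)$, show that the given Heegaard splitting lifts through it, and then compute the lifted gluing slope in order to identify the cover as $L(p,q)$.

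By \reffac{Fund}, $\pi_1(L(2p,q)) \isom \ZZ_{2p}$, which has a unique subgroup of index two, namely $2 \ZZ_{2p} \isom \ZZ_p$.  Let $\pi \from \widetilde{M} \to L(2p,q)$ be the corresponding connected double cover.  Write $L(2p,q) = U \cup V$ for the Heegaard splitting of \refdef{Lens}, with framings $(\lambda_U, \mu_U)$ and $(\lambda_V, \mu_V)$ and gluing data $\mu_V = 2p \lambda_U + q \mu_U$ in $\pi_1(\bdy U)$.  A Mayer--Vietoris (or van Kampen) computation shows that $\lambda_V$ (the core of $V$) is a \emph{generator} of $\pi_1(L(2p,q))$, and likewise for $\lambda_U$; in particular neither lies in the index-two subgroup.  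Hence $\pi^{-1}(U)$ and $\pi^{-1}(V)$ are each connected, and each is the connected double cover of a solid torus.  The connected double cover of $S^1 \cross D^2$ is again a solid torus, so I obtain a Heegaard splitting $\widetilde{M} = \widetilde{U} \cup \widetilde{V}$.

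Next I compute the lifted gluing data.  Equip $\widetilde{U}$ with the framing $(\lambda_{\widetilde{U}}, \mu_{\widetilde{U}})$ in which $\lambda_{\widetilde{U}}$ is the core of $\widetilde{U}$ and $\mu_{\widetilde{U}}$ is a lift of $\mu_U$; do the same for $\widetilde{V}$.  Because the cover doubles the core direction, the induced map $\pi_\ast \from \pi_1(\bdy \widetilde{U}) \to \pi_1(\bdy U)$ is given by
\[
\lambda_{\widetilde{U}} \mapsto 2\lambda_U, \qquad \mu_{\widetilde{U}} \mapsto \mu_U,
\]
and similarly for $\widetilde{V}$.  Pushing down the meridian of $\widetilde{V}$, we get $\pi_\ast(\mu_{\widetilde{V}}) = \mu_V = 2p \lambda_U + q \mu_U = \pi_\ast(p \lambda_{\widetilde{U}} + q \mu_{\widetilde{U}})$.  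Since $\pi_\ast$ is injective on the peripheral $\ZZ^2$, this forces $\mu_{\widetilde{V}} = p \lambda_{\widetilde{U}} + q \mu_{\widetilde{U}}$ on $\bdy \widetilde{U} = \bdy \widetilde{V}$.  (That this is a primitive class is automatic, since $\gcd(q, 2p) = 1$ forces $\gcd(p,q) = 1$.)

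The lifted Heegaard decomposition of $\widetilde{M}$ thus attaches the meridian of $\widetilde{V}$ along the slope $q/p$ curve on $\bdy \widetilde{U}$, so $\widetilde{M} \homeo L(p,q)$ directly from \refdef{Lens}.

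The main obstacle is the careful bookkeeping of framings: one must verify that the cores of $U$ and $V$ generate $\pi_1(L(2p,q))$ (so the splitting lifts without disconnecting), and that the longitude--meridian framings lift to framings of $\widetilde{U}$ and $\widetilde{V}$ for which the covering map has the clean matrix form above.  Once this is set up, the identification of the slope as $q/p$ is immediate.
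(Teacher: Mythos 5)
Your proof is correct. The paper states Fact~\ref{Fac:Cover} as a standard fact without supplying an argument, so there is no paper proof to compare against. Your route is a careful Heegaard-splitting argument: you use that the cores of both solid tori generate $\pi_1(L(2p,q)) \isom \ZZ_{2p}$ (so both lie outside the unique index-two subgroup), conclude that the preimages of $U$, $V$, and the Heegaard torus are all connected, and then push the framing through the degree-two cover on the boundary torus ($\lambda_{\widetilde{U}} \mapsto 2\lambda_U$, $\mu_{\widetilde{U}} \mapsto \mu_U$) to read off the lifted gluing slope $q/p$. The one point you could make more explicit is that the lifted framing $(\lambda_{\widetilde{U}},\mu_{\widetilde{U}})$ really does satisfy $\lambda_{\widetilde{U}} \cdot \mu_{\widetilde{U}} = 1$ (this follows since $\pi_*\lambda_{\widetilde{U}} \cdot \pi_*\mu_{\widetilde{U}} = 2$ and the cover has degree $2$), but this is a routine check.

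For comparison, the shortest and perhaps most natural route in the context of this paper is the quotient description $L(n,q) = S^3/\ZZ_n$, with $\ZZ_n$ acting on $S^3 \subset \CC^2$ by $(z_1,z_2) \mapsto (\zeta z_1, \zeta^q z_2)$ for $\zeta$ a primitive $n$-th root of unity. The index-two subgroup $\ZZ_p \subset \ZZ_{2p}$ is generated by $\zeta^2$, a primitive $p$-th root, and its action is exactly the standard $L(p,q)$ action; the inclusion $\ZZ_p \hookrightarrow \ZZ_{2p}$ then gives the double cover $L(p,q) \to L(2p,q)$ directly. This avoids all framing bookkeeping and also explains the remark following the fact, that $L(p,q)$ double covers $L(2p,p-q)$ as well, since $\zeta^{2(p-q)} = \zeta^{-2q}$ and $L(p,-q) \homeo L(p,q)$. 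Your Heegaard argument has the advantage of working entirely within the paper's gluing conventions and not invoking the geometric group action.
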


Notice $L(p, q)$ double covers both $L(2p, q)$ and $L(2p, p-q)$.
For example, $L(8, 1)$ and $L(8, 3)$ are both covered by $L(4, 1)$.
However, $L(8, 1)$ and $L(8, 3)$ are not homeomorphic according to \reffac{Homeo}.
Thus one cannot recover the coefficients of a lens space just by knowing a double cover.

\begin{fact}
\label{Fac:Glue}
Suppose that $\alpha$ and $\beta$ are Farey neighbours in $T$, with slopes $q/p$ and $s/r$.
Suppose that $\gamma$ has slope $q'/p'$ in $T$.
The three-manifold $M = U \cup (T \cross I) \cup V$, formed by attaching the meridian of $U$ along $\alpha \cross 0$ and attaching the meridian of $V$ along $\gamma \cross 1$, is homeomorphic to the lens space $L(-p'q + q'p, p's - q'r)$.
\end{fact}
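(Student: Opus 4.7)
The plan is to recognise $M$ as a lens space by choosing a framing of $\bdy U$ in which $\alpha$ plays the role of the meridian, and then computing the slope that $\gamma$ takes in this framing. Topologically the collar $T \cross I$ is inert: it merely identifies $\bdy U$ with $\bdy V$ via the identity on $T$, so $M$ may as well be the result of gluing $U$ and $V$ along $T$ with meridians $\alpha$ and $\gamma$ respectively.

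First, I would exploit the Farey-neighbour hypothesis $\alpha \cdot \beta = \pm 1$ to produce a framing $(\lambda_U, \mu_U) = (\epsilon \beta, \alpha)$ of $T$, with $\epsilon \in \{\pm 1\}$ chosen so that $\lambda_U \cdot \mu_U = +1$. Since $\mu_U = \alpha$ is the meridian of $U$, this is a standard framing of $\bdy U$ in the sense of \refdef{Lens}. Writing $\gamma = a \lambda_U + b \mu_U$ for coprime integers $a, b$ makes $\gamma$ have slope $b/a$ in this framing, and \refdef{Lens} then identifies $M$ with $L(a, b)$.

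To compute $a$ and $b$ explicitly, I would substitute $\gamma = p'\lambda + q'\mu$, $\alpha = p\lambda + q\mu$ and $\beta = r\lambda + s\mu$ into $\gamma = \epsilon a \beta + b \alpha$, obtaining a $2 \cross 2$ system over $\ZZ$ whose determinant equals $\epsilon(ps - qr) = \pm 1$. Cramer's rule then produces the expressions $p'q - q'p$ and $p's - q'r$ up to overall signs, and \reffac{Homeo}, which in particular gives $L(a,b) \homeo L(-a, -b)$, absorbs this ambiguity to yield $M \homeo L(-p'q + q'p, p's - q'r)$.

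The entire argument is routine linear algebra, so the main point of care is the sign bookkeeping between orientations of $\beta$, the choice of framing on $\bdy U$, and the convention relating slope to lens-space coefficients. I do not anticipate any conceptual obstacle beyond this, as the Farey-neighbour hypothesis is exactly what makes the relevant change-of-basis matrix over $\ZZ$ unimodular.
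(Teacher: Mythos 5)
The paper records \reffac{Glue} as a ``fact'' without supplying a proof, so there is nothing to compare your argument against; I will instead assess it on its own terms.

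Your plan is correct and is the standard one. The collar $T \times I$ is indeed inert, so $M$ is the result of gluing $U$ and $V$ along $T$ with meridians $\alpha$ and $\gamma$. The Farey-neighbour hypothesis $\alpha \cdot \beta = \pm 1$ is exactly what lets $\beta$ serve as a longitude for $U$, making $\langle \epsilon\beta, \alpha\rangle$ a framing of $\bdy U$ with $\alpha$ in the meridian slot. Expanding $\gamma = a(\epsilon\beta) + b\alpha$ and solving by Cramer's rule (the determinant being $\pm 1$) gives $a = p'q - q'p$, which is $\pm\gamma\cdot\alpha$ as it must be, and $b = \epsilon(rq' - sp')$, so $M \homeo L(p'q - q'p,\ \epsilon(rq'-sp'))$. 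Your one caveat is well placed: $\epsilon\beta$ need not be the paper's product longitude $\{x\}\times S^1$, so your $b$ may differ from the stated second coefficient $p's-q'r$ by a sign and by a multiple of $a$ (reframing). Both ambiguities are absorbed by \reffac{Homeo}, since $|{-a}| = |a|$ and $\epsilon(rq'-sp') \equiv \pm(p's-q'r) \pmod a$. So the sign bookkeeping you flag is the only subtlety and it resolves exactly as you anticipate; the argument is complete modulo writing out the Cramer's-rule computation.
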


We use $\twist$ to denote a twisted product.
We write $K = K^2 = S^1 \twist S^1$ for the Klein bottle.
We write $K \twist I$ for the orientation $I$--bundle over the Klein bottle.

Recall that $K$ contains exactly four essential simple closed curves, up to isotopy.
These are the cores $\alpha$ and $\alpha'$ of the two M\"obius bands, their common boundary $\delta$, and the fibre $\beta$ of the bundle structure $K = S^1 \twist S^1$.
Thus $\pi_1(K) \isom \pi_1(K \twist I)$ has a presentation
\[
\group{a,b}{aba^{-1} = b^{-1}}
\]
where $a = [\alpha]$ and $b = [\beta]$.
This presentation is not canonical, as we could have chosen $\alpha'$ instead of $\alpha$.
Let $\rho \from T \to K$ be the orientation double cover.
Thus we have
\[
\rho_*(\pi_1(T)) \isom \subgp{a^2, b} < \pi_1(K).
\]
Since $a^2 = [\delta]$ this generating set for $\pi_1(T)$ gives a canonical framing of $T$ (up to the choice of orientations).
The identification of $T$ and $\bdy (K \twist I)$ now gives us a canonical framing of the latter.

\begin{definition}
\label{Def:Prism}
A three-manifold $M$, obtained by gluing an $I$--bundle $K \twist I$ to a solid torus $W$ via a homeomorphism of their boundaries, is called a \emph{prism manifold} if it has finite fundamental group.
\end{definition}

We use the notation $P(p, q)$ to denote the three-manifold obtained by gluing the meridian slope of a solid torus $W$ to the slope $a^{2p}b^q$ in $\bdy (K \twist I)$.
We call $p$ and $q$ \emph{coefficients} of $P(p, q)$.

\begin{fact}
\label{Fac:FundPrism}
$\pi_1(P(p, q)) \isom \group{a, b}{aba^{-1} = b^{-1}, a^{2p} = b^{-q}}$.
\end{fact}

For example, we have $P(1, 1) \homeo L(4, 3)$;
see \reflem{PrismIsLens}.
On the other hand we have $P(1, 0) \homeo \RP^3 \connect \RP^3$
and $P(0, 1) \homeo S^2 \cross S^1$.
Since the fundamental groups are infinite, we do not admit $P(1, 0)$ or $P(0, 1)$ as prism manifolds.

\begin{lemma}
\label{Lem:EmbeddedKleinBottle}
Let $M$ be an irreducible oriented closed non-Haken three-manifold.
Then $M$ contains an embedded Klein bottle if and only if it is a prism manifold.
In particular, a lens space contains an embedded Klein bottle if and only if it is a prism manifold.
\end{lemma}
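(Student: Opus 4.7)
The direction ``prism manifold contains a Klein bottle'' is immediate from \refdef{Prism}: such an $M$ contains a copy of $K \twist I$, whose zero-section is an embedded Klein bottle.

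For the converse, suppose $K \subset M$ is an embedded Klein bottle. Since $M$ is orientable and $K$ is not, a regular neighbourhood $N(K)$ is homeomorphic to the twisted $I$-bundle $K \twist I$; set $T = \bdy N(K)$. The calculation recalled just before \refdef{Prism} identifies the image of $\pi_1(T) \to \pi_1(K \twist I)$ as the subgroup $\subgp{a^2, b}$, so this map is injective and $T$ is incompressible in $N(K)$. Since $M$ is non-Haken and $T$ is a two-sided torus, $T$ must compress somewhere in $M$, and by the previous sentence the compressing disc lies in $W := \overline{M \setminus N(K)}$. Compressing $T$ along this disc yields a two-sphere in $W$; irreducibility of $M$ then forces this sphere to bound a ball (which must lie in $W$, since the other side of the sphere contains $N(K)$ and $\pi_1(K \twist I) \neq 1$), and hence $W$ is a solid torus. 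This exhibits $M = (K \twist I) \cup W$ as in \refdef{Prism}.

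The main obstacle is the remaining requirement that $\pi_1(M)$ be finite, which is built into the definition of a prism manifold. The plan is to use that the gluing puts a Seifert fibred structure on $M$, extending one of the two fibrations of $K \twist I$ (either over the M\"obius band, or over $D^2(2,2)$); the base orbifold of $M$ is therefore $\RP^2$ with at most one cone point, or $S^2$ with at most three cone points. For such small Seifert fibred spaces, any instance with infinite fundamental group either contains a vertical incompressible torus (contradicting non-Haken) or a reducing sphere (contradicting irreducibility); compare the excluded degenerate gluings $P(0,1) \homeo S^2 \cross S^1$ and $P(1,0) \homeo \RP^3 \connect \RP^3$ recalled in the excerpt, which are precisely the two ways these obstructions appear. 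Hence $\pi_1(M)$ is finite and $M$ is a prism manifold.

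The ``in particular'' statement then follows by applying the main claim to $M$ a lens space: every lens space is closed, oriented, and irreducible, and since it has finite cyclic fundamental group it admits no two-sided incompressible surface of positive genus, so is non-Haken.
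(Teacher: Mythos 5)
Your proof follows essentially the same route as the paper's: identify $N(K)$ as $K \twist I$, use non-Hakenness together with $\pi_1$-injectivity of the boundary torus to compress $T = \bdy N(K)$ on the outside, get a sphere, use irreducibility to obtain a ball, and conclude the exterior $W$ is a solid torus. Two remarks.

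First, you correctly flag a point the paper's proof glosses over: \refdef{Prism} builds finiteness of $\pi_1$ into the definition, and the paper simply asserts ``therefore $M$ is a prism manifold'' once $M = (K \twist I) \cup W$ is established, without checking finiteness. Your observation that this needs an argument is right. However, your proposed Seifert-fibration detour is heavier than necessary: the only two gluings of $K \twist I$ to a solid torus that give infinite $\pi_1$ are precisely the two the paper excludes, $P(1,0) \homeo \RP^3 \connect \RP^3$ and $P(0,1) \homeo S^2 \cross S^1$, and both are reducible, so the irreducibility hypothesis already rules them out. (Your Seifert-fibred argument as stated is also slightly off --- for a Seifert fibred space over a \emph{spherical} base orbifold with zero Euler number, what one finds is a reducing sphere, not a vertical incompressible torus; the latter appears only over Euclidean bases. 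This doesn't break the proof since you phrased it as a disjunction, but the torus alternative never occurs here.)

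Second, a small inaccuracy in the justification for where the ball $B$ sits: you argue $B$ must lie in $W$ ``since the other side of the sphere contains $N(K)$ and $\pi_1(K \twist I) \neq 1$.'' A ball can perfectly well contain codimension-zero submanifolds with nontrivial fundamental group (e.g.\ a knotted solid torus), so that alone is not a contradiction. The clean argument --- which is what the paper is alluding to with ``Note that $B$ is disjoint from $K$'' --- is that a closed nonorientable surface, in particular a Klein bottle, cannot be embedded in a ball.
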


\begin{proof}
By construction, prism manifolds contain an embedded Klein bottle. We need to establish the
converse. Suppose that $M$ contains an embedded Klein bottle $K$. By the orientability of $M$,
the regular neighbourhood $N(K)$ is the orientable $I$--bundle over the Klein bottle.
Let $T = \bdy N(K)$ be the boundary torus.
Since $M$ is non-Haken, $T$ is compressible along a disc $D$.
Note that $D$ has interior disjoint from $N(K)$.
Compressing $T$ along $D$ gives a sphere, which bounds a ball $B$ by the irreducibility of $M$.
Note that $B$ is disjoint from $K$.
Thus $T$ bounds a solid torus with interior disjoint from $N(K)$.
Therefore, $M$ is a prism manifold.
\end{proof}

We now give a proof of~\cite[Corollary~6.4]{BredonWood69}.
For another account, with further references, see~\cite[Theorem~1.2]{GeigesThies23}.
This proof is included for the convenience for the reader.

\begin{lemma}
\label{Lem:PrismIsLens}
$P(p, q)$ is a lens space if and only if $q = 1$ and $p \neq 0$.
In this case, $P(p, 1) \homeo L(4p, 2p + 1)$.
\end{lemma}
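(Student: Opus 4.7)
The plan is to prove each direction separately. For the forward implication, I would extract necessary conditions from $H_1$ and $\pi_1$. Abelianizing the presentation of \reffac{FundPrism} gives the relation matrix $\bigl(\begin{smallmatrix}0 & 2 \\ 2p & q\end{smallmatrix}\bigr)$ for $H_1(P(p,q))$. A Smith normal form computation shows that $H_1$ is infinite when $p = 0$ and isomorphic to $\ZZ_2 \oplus \ZZ_{2|p|}$ when $p \neq 0$ and $q$ is even; in both cases $H_1$ is not cyclic, so $P(p,q)$ is not a lens space. For $p \neq 0$ and $q \geq 3$ odd, I would construct a surjection from $\pi_1(P(p,q))$ onto the dihedral group of order $2q$ by sending $a$ to a reflection $s$ and $b$ to a rotation $r$ of order $q$. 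This is well-defined because $srs^{-1} = r^{-1}$ and $s^{2p} = 1 = r^q$ hold in the dihedral group. The image of $b$ has order $q \geq 3$, so $b \neq b^{-1}$ in $\pi_1(P(p,q))$, and hence the relation $aba^{-1} = b^{-1}$ forces $\pi_1(P(p,q))$ to be non-abelian and therefore non-cyclic.

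For the converse with $p \neq 0$ and $q = 1$, the plan is to exhibit a genus-one Heegaard splitting of $P(p,1)$ and then identify the lens space via \reffac{Glue}. The decomposition $K = M_1 \cup_\delta M_2$ of the Klein bottle as a union of two M\"obius bands along their common boundary lifts to $K \twist I = N(M_1) \cup_A N(M_2)$, a union of two solid tori glued along a properly embedded annulus $A$ whose boundary consists of two parallel copies of $\delta$ on $T = \bdy(K \twist I)$. These split $T$ into annuli $B_+, B_-$, with $B_\pm \subset \bdy N(M_\pm)$. Set $V := N(M_2) \cup_{B_-} W$. I would verify that the core of $B_-$ is non-meridional on both sides: on $\bdy N(M_2)$ it is isotopic to $\delta$, which represents $2\lambda_2 + \mu_2$ as the boundary of the M\"obius band $M_2$; on $\bdy W$ it has slope $a^2$, meeting the meridian $a^{2p}b$ of $W$ in one point. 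Consequently $V$ is a solid torus and $P(p,1) = N(M_1) \cup_{T'} V$ is a genus-one Heegaard splitting with Heegaard torus $T' = \bdy N(M_1)$, so $P(p,1)$ is a lens space $L(m,n)$ with $m = |H_1(P(p,1))| = 4p$.

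To determine $n$, I would compute the meridian $\gamma$ of $V$ on $T'$ in the framing $\subgp{\lambda_1, \mu_1}$. The curve $\delta$ sits on $T'$ as $2\lambda_1 + \mu_1$ (as the boundary of $M_1 \subset N(M_1)$), while in $H_1(V) \isom \ZZ$ (generated by the core $\alpha'$ of $N(M_2)$) we have $\delta = 2\alpha'$. Combining these with $|H_1(P(p,1))| = 4p$ via a Mayer--Vietoris computation for $P(p,1) = N(M_1) \cup V$ pins down the inclusion $H_1(T') \to H_1(V)$ up to sign, and its kernel is generated by a curve of slope $(4p,\ 2p-1)$ in the $\subgp{\lambda_1, \mu_1}$-framing. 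Applying \reffac{Glue} with $\alpha = \mu_1$, $\beta = \lambda_1$, and $\gamma$ this kernel generator gives $P(p,1) \homeo L(4p, 2p-1) \homeo L(4p, 2p+1)$, the last equivalence coming from \reffac{Homeo} since $(2p-1)(2p+1) \equiv -1 \pmod{4p}$. The hard part is the bookkeeping of slopes and orientations so that the signs in the Mayer--Vietoris output match the conventions of \reffac{Glue}.
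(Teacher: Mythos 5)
Your proof is correct, and the overall architecture matches the paper's: for $q \geq 2$ you find a non-cyclic quotient (the paper uses $D_{2q}$ uniformly; you split into $q$ even, handled via $H_1$, and $q \geq 3$ odd, handled via $D_{2q}$ — an inessential variation), and for $q=1$ you exhibit $P(p,1)$ as a genus-one Heegaard splitting by observing that the annulus of attachment is primitive (longitudinal) in $W$, which is exactly the paper's argument. Where you genuinely diverge is in the final identification of the coefficients. The paper avoids the slope bookkeeping you flag as ``the hard part'' by an indirect characterization: it notes that $P(p,1)$ is the \emph{unique} manifold that is simultaneously a lens space and a prism manifold with $|\pi_1| = 4p$, and then shows $L(4p,2p+1)$ has all three properties by a doubling-and-twisting construction (double a solid torus along its boundary to get $S^2\times S^1$, with a $1/2$-slope M\"obius band doubling to a Klein bottle, then $p$-twist along that slope). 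Your approach instead computes the meridian of $V$ directly via Mayer--Vietoris for $P(p,1) = N(M_1) \cup V$, using the constraints $\delta = 2\lambda_1 + \mu_1$ on $T'$, $\delta = 2\alpha'$ in $H_1(V)$, and $|\det| = 4p$ to force the winding number $c$ of $\lambda_1$ in $V$ to be $1 \pm 2p$ and hence the filling slope to be $(4p, 2p \mp 1)$. I checked the arithmetic and it works out, including the final equivalence $L(4p,2p-1) \homeo L(4p,2p+1)$ via $(2p-1)(2p+1) \equiv -1 \pmod{4p}$. Your method is more computational, the paper's slicker but requiring a side construction; both are valid. One small imprecision worth fixing: you say you ``verify that the core of $B_-$ is non-meridional on both sides'' before concluding $V$ is a solid torus, but non-meridionality on both sides does not imply the union of two solid tori along an annulus is a solid torus (that would permit small Seifert fibred pieces). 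What you actually need, and what you do correctly verify on the $W$ side by computing intersection number one with $a^{2p}b$, is that the annulus is \emph{primitive} in at least one side.
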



\begin{proof}
Recall $\gcd(p, q) = 1$ for any slope $q/p$.
So if $q = 0$ then $p = 1$;
likewise, if $p = 0$, then $q = 1$.
In these cases, as noted above, $P(1, 0)$ and $P(0, 1)$ are not lens spaces.

Suppose that $q \geq 2$.
Taking $P = P(p, q)$ we note that $\pi_1(P)$ has as a quotient group
\[
D_{2q} = \group{a,b}{aba^{-1} = b^{-1}, a^2 = b^q = 1}
\]
the dihedral group of order $2q$.
This is not cyclic, so $P$ is not a lens space.

Thus we may assume that $q = 1$ and $p \neq 0$.
Set $P = P(p, 1)$.
We note that
\[
\pi_1(P) = \group{a,b}{aba^{-1} = b^{-1}, a^{2p} = b^{-1}}
         \isom \group{a}{a^{4p} = 1}.
\]
This implies that $P(p,1)$ is the unique manifold, up to homeomorphism, that is a lens
space and prism manifold with fundamental group of order $4p$.
We now give a direct proof that $P = P(p,1)$ is a lens space.
Recall that if we glue a pair of solid tori along an annulus, primitive in at least one of them, the result is a solid torus.
Write $K \twist I = Q \cup R$ as a union of solid tori, each the orientation $I$--bundle over a M\"obius band.
Note $Q \cap R$ is a vertical annulus in $K \twist I$.
Set $A = Q \cap \bdy (K \twist I)$.
Note that $A$ is not primitive in $Q$, as it crosses the meridian disc of $Q$ twice.

Recall that $P = (Q \cup R) \cup W$, all solid tori.
When $q = 1$ we find that the annulus $A$ is primitive in $W$.
Thus $Q \cup W$ is a solid torus and so $P = (Q \cup W) \cup R$ is a lens space.
This proves the first half of the lemma.

To finish we must identify the lens space coefficients of $P = P(p, 1)$.
Let $\gamma$ be the $1/2$ slope on the boundary of a solid torus $V$.
Since $\gamma$ crosses the meridian exactly twice, it bounds a M\"obius band $M$ in $V$.
If we double $V$ across its boundary, we obtain $S^2 \cross S^1$.
Note $M$ doubles to give a Klein bottle.
We now alter the double by opening it along an annulus neighbourhood of $\gamma$ inside of $\bdy V$ and regluing with a twist to obtain the lens space $L$.
Note the Klein bottle persists, so $L$ is again a prism manifold.
The image of the meridian of $V$ under the $p$--fold twist about $\gamma$ is the gluing slope, $(2p + 1)/4p$.

Thus $L(4p, 2p + 1)$ is a lens space, is a prism manifold, and has fundamental group of order $4p$.
Thus $L(4p, 2p + 1) \homeo P(p, 1)$ and we are done.
\end{proof}

\begin{lemma}
\label{Lem:PrismCover}
Suppose that $q/p$ is a slope with $s/r$ as a Farey neighbour.
Then $P(p, q)$ is double covered by the lens space $L \homeo L(2pq, ps + qr)$.
The subgroup of $\pi_1(L)$ that is fixed by the deck group, has order $2p$.
\end{lemma}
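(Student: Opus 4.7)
The plan is to lift the decomposition $P(p,q) = (K \twist I) \cup_T W$ to the orientation double cover of $K \twist I$, apply \reffac{Glue} to identify the resulting manifold $L$ as a lens space, and then analyse the deck action on $\pi_1(L)$.

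First I would set up the double cover.  The orientation double cover of $K \twist I$ is $T \cross I$, with free deck involution $\sigma(x, t) = (\tau(x), 1-t)$, where $\tau$ is the free involution of $T$ whose quotient is $K$.  Working in the model $T = \RR^2/\ZZ^2$ with $\tau(x, y) = (x + \half, -y)$, the induced map $\tau_*$ on $\pi_1(T) = \subgp{a^2, b}$ fixes $a^2$ and sends $b$ to $b^{-1}$.  Identifying $\partial(K \twist I) \homeo T$ via the cover map restricted to $T \cross \{0\}$, the cover map restricted to $T \cross \{1\}$ becomes $\tau$.  Lifting the attachment of $W$ then produces $L = W_1 \cup (T \cross I) \cup W_2$, whose meridians are the two lifts of the slope $a^{2p}b^q$.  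In the product framing $(a^2, b)$ on each end, these lifts have classes $(p, q)$ and $\tau_*(p, q) = (p, -q)$, hence slopes $q/p$ and $-q/p$ respectively.

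Next I would apply \reffac{Glue} with $\alpha$ of slope $q/p$, its Farey neighbour $\beta$ of slope $s/r$, and $\gamma$ of slope $q'/p' = -q/p$, obtaining
\[
L \homeo L(-p'q + q'p, \, p's - q'r) = L(-2pq, \, ps + qr) \homeo L(2pq, ps + qr),
\]
which establishes the first assertion.  For the deck action I would note that $\pi_1(L) \isom \subgp{a^2, b}$ is the index-two subgroup of $\pi_1(P(p,q))$; the relation $aba^{-1} = b^{-1}$ makes $a^2$ and $b$ commute in $\pi_1(L)$, and the relation $(a^2)^p b^q = 1$ persists.  A Reidemeister--Schreier calculation (equivalently, the Smith normal form of the resulting presentation matrix $\bigl(\begin{smallmatrix} p & 0 \\ q & 2q\end{smallmatrix}\bigr)$) confirms that $\pi_1(L)$ is cyclic of order $2pq$, with $a^2$ of order $2p$ and $b$ of order $2q$.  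The deck transformation acts by conjugation by $a$, fixing $a^2$ and sending $b$ to $b^{-1}$.  An element $a^{2m}b^n$ is fixed iff $b^{2n} = 1$, iff $q \mid n$; substituting $b^q = a^{-2p}$ absorbs such powers of $b$ into $\subgp{a^2}$, so the fixed subgroup equals $\subgp{a^2}$ and has order $2p$.

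The main difficulty I anticipate is the bookkeeping around framings under the cover: one must track how the product framing of $T \cross I$ relates to the canonical framing on $\partial(K \twist I)$ through the two sheets, and in particular verify that the asymmetry $\tau_*(b) = b^{-1}$ produces genuinely distinct slopes on the two boundary tori.  This is precisely what forces \reffac{Glue} to output $L(\pm 2pq, \ldots)$ rather than the degenerate $L(0, \ldots) \homeo S^2 \cross S^1$ that would result if both lifted meridians had the same slope.
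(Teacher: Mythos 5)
Your proof is correct and follows essentially the same route as the paper's: lift the decomposition $P(p,q) = (K \twist I) \cup W$ to the $T \cross I$ double cover, observe that the two lifted meridians have slopes $q/p$ and $-q/p$ in the product framing, apply \reffac{Glue} to identify $L \homeo L(2pq, ps+qr)$, and then compute the fixed subgroup of the involution on $\pi_1(L) \isom H_1(L)$ (fixing $a^2$, inverting $b$). The only cosmetic difference is in the last step: the paper counts fixed points via lattice points in the parallelogram with vertices $(0,0), (p,-q), (2p,0), (p,q)$, while you identify $\pi_1(L)$ with the index-two subgroup $\subgp{a^2, b} \le \pi_1(P(p,q))$ and argue that $a^{2m}b^n$ is fixed iff $b^{2n}=1$ iff $q \mid n$, whence the fixed subgroup is $\subgp{a^2}$ of order $2p$; both are valid and amount to the same computation.
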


\begin{proof}
Recall that $T \cross I$ is a double cover of $K \twist I$.
Let $U$ and $V$ be solid tori, whose disjoint union double covers the solid torus $W$.
Thus the lens space $L = U \cup (T \cross I) \cup V$ double covers $P = W \cup (K \twist I)$.
Let $\alpha$ and $\beta$ be simple closed curves in $T$, lifting $a^2$ and $b$ in $K$.
Thus $\subgp{\alpha,\beta}$ gives a framing of $T$.
The meridians of $U$ and $V$ have slopes $q/p$ and $-q/p$ with respect to this framing.
\reffac{Glue} now implies $L \homeo L(2pq, ps+qr)$.

The above decomposition of $L$ gives a presentation of $\pi_1(L)$.  Abelianising, we obtain the following.
\[
\pi_1(L) \isom \group{\alpha, \beta}{p\alpha + q\beta = 0, p\alpha - q\beta = 0}
\]
The elements correspond to the integer lattice points (up to translation)
in the parallelogram with vertices at $(0, 0)$, $(p, -q)$, $(2p, 0)$, and $(p, q)$.
The deck group fixes $\alpha$ while sending $\beta$ to $\beta^{-1}$.
The fixed points under this action are the lattice points with second coordinate zero.
So the fixed subgroup has order $2p$.
\end{proof}




We use \reflem{PrismCover} to decide if a given manifold is a prism manifold $P(p, q)$,
assuming that a double cover of the manifold is a lens space.

\section{Normal and almost normal surfaces}
\label{Sec:NormalAlmostNormal}

\begin{definition}
\label{Def:NormalArc}
Suppose that $f$ is a two-simplex.
An arc, properly embedded in $f$, is \emph{normal} if it misses the vertices of $f$ and has endpoints on distinct edges.
\end{definition}

\begin{definition}
\label{Def:TriangleSquare}
Suppose that $\Delta$ is a tetrahedron.
A disc $D$, properly embedded in $\Delta$ and transverse to the edges of $\Delta$, is
\begin{itemize}
\item a \emph{triangle} if $\bdy D$ consists of three normal arcs;
\item a \emph{square} if $\bdy D$ consists of four normal arcs.
\end{itemize}
In either case $D$ is a \emph{normal disc}.
\end{definition}

\begin{definition}
\label{Def:NormalSurface}
Suppose that $(M, \calT)$ is a triangulated three-manifold.
A surface $S$, properly embedded in $M$, is \emph{normal} if, for each tetrahedron $\Delta \in \calT$, the intersection $S \cap \Delta$ is a disjoint collection of triangles and squares.
\end{definition}

\begin{definition}
Suppose that $\Delta$ is a tetrahedron.
A surface $E$, properly embedded in $\Delta$ and transverse to the edges of $\Delta$, is
\begin{itemize}
\item an \emph{octagon} if $E$ is a disc with $\bdy E$ consisting of eight normal arcs;
\item a \emph{tubed piece} if $E$ is an annulus obtained from two disjoint normal discs by attaching a tube that runs parallel to an arc of an edge of $\Delta$.
\end{itemize}
In either case $E$ is an \emph{almost normal piece}.
\end{definition}

\begin{figure}
  \includegraphics{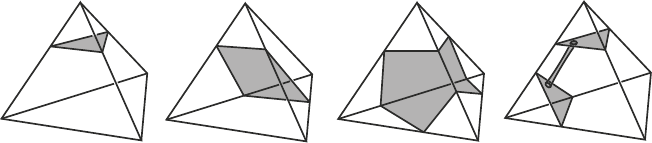}
  \caption{Left to right: triangle, square, octagon, tubed piece.}
  \label{Fig:NormalAlmostNormal}
\end{figure}

\begin{definition}
\label{Def:AlmostNormalSurface}
Suppose that $(M, \calT)$ is a triangulated three-manifold.
A surface $S$, properly embedded in $M$, is \emph{almost normal} if, for each tetrahedron $\Delta \in \calT$, the intersection $S \cap \Delta$ is a disjoint collection of triangles and squares except for precisely one tetrahedron, where the collection additionally contains exactly one almost normal piece.
\end{definition}

\begin{definition}
A Heegaard surface $S$ for a three-manifold $M$ is \emph{strongly irreducible} if it does not have disjoint compressing discs emanating from opposite sides of $S$.
\end{definition}

We note that the genus one Heegaard surface for a lens space is strongly irreducible.
We now have a result due to Stocking~\cite[Theorem~1]{Stocking}, following work of Rubinstein.

\begin{theorem}
\label{Thm:AlmostNormalHeegaard}
Suppose that $M$ is a closed, connected, oriented three-manifold equipped with a triangulation $\calT$.
Suppose that $H$ is a strongly irreducible Heegaard surface for $M$.
Then $H$ is (ambiently) isotopic to a surface which is almost normal with respect to $\calT$.  \qed
\end{theorem}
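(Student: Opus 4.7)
The plan is to prove this via Rubinstein's \emph{sweep-out} technique, as refined by Stocking. Let $U$ and $V$ be the handlebodies into which $H$ splits $M$, with spines $\Sigma_U \subset U$ and $\Sigma_V \subset V$. Choose a smooth function $f \colon M \to [0,1]$ with $f^{-1}(0) = \Sigma_U$, $f^{-1}(1) = \Sigma_V$, and such that each generic level set $H_t = f^{-1}(t)$ is a closed surface isotopic to $H$. After a small perturbation arrange that $H_t$ is transverse to the one-skeleton of $\calT$ for all but finitely many critical values $t_1 < \cdots < t_k$, at which $H_t$ has a single local tangency of a standard type with some simplex of $\calT$.

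For each generic $t$, run Haken's normalisation procedure on $H_t$: this iteratively isotopes away triangles and squares that are in the wrong position relative to $\calT$. If $H_t$ is already normal there is nothing to do. Otherwise, each elementary step of the procedure isotopes $H_t$ across a small ball lying either in $U$ or in $V$. Label $t$ with a $+$ sign if some normalising sequence can begin with a $U$-side isotopy, and with a $-$ sign if some sequence can begin with a $V$-side isotopy; a given $t$ may carry one, both, or neither label. Near $t = 0$ the surface $H_t$ is a small perturbation of $\bdy N(\Sigma_U)$ and normalises by pushing into $V$, giving label $-$; symmetrically the label near $t = 1$ is $+$.

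The key step is to show that at a transition, $H_t$ is already almost normal. By choosing the sweep-out from a family that minimises the maximum complexity of $H_t$ (a minimax argument), one produces a value $t^\ast$ carrying both labels simultaneously. The two normalisation directions yield disc-like compressions $D^+ \subset U$ and $D^- \subset V$ of $H_{t^\ast}$. Strong irreducibility forbids $D^+$ and $D^-$ from being disjoint, so their boundaries must meet. A careful combinatorial analysis of $H_{t^\ast}$ inside a single tetrahedron $\Delta$ containing the intersection shows that the only obstruction to normality in $\Delta$ is exactly one octagon or one tubed piece, while $H_{t^\ast} \cap \Delta'$ is a disjoint union of triangles and squares for every other tetrahedron $\Delta'$. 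The resulting partial normalisation finishes the proof.

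The main obstacle is justifying this last paragraph. First, one must set up the space of sweep-outs carefully enough that a minimax argument produces the doubly-labelled level $t^\ast$ (rather than merely a change of label across a critical value $t_i$, where the label can jump for uninteresting reasons). Second, and more delicate, one must show that the combined data $(D^+, D^-)$ at $t^\ast$ is forced to assemble into exactly one octagon or one tubed piece, rather than a more complicated defect; this rests on the rigidity of the two normalisation processes when they are forbidden (by strong irreducibility) to commute past one another. Handling the critical values of $f$, where $H_t$ sweeps past a vertex, edge, or face of $\calT$ and the normalisation data changes discontinuously, is the principal technical nuisance along the way.
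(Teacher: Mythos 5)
The paper does not prove this theorem: it is quoted as a known result, attributed to Stocking~\cite[Theorem~1]{Stocking} (building on Rubinstein), and the statement is closed with a \texttt{\textbackslash qed} precisely because the authors treat it as a black box. So there is no ``paper's own proof'' to compare against; the comparison must be to the cited literature.

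Your sketch is a fair high-level outline of the Rubinstein--Stocking sweep-out argument: set up a sweep-out between the two spines, label generic levels according to the side on which normalisation can begin, observe that the labels differ near the two ends, and extract an almost normal surface from a level where the labels collide, using strong irreducibility to force compressions from the two sides to interact. You are also candid about where the real work lies, and you have correctly identified the two genuinely hard steps: (i) organising the labelling so that a ``transition'' level $t^\ast$ exists with useful data (the labels do not simply carry over continuously across critical values of the sweep-out, and the naive ``minimax over sweep-outs'' framing needs to be replaced by a more careful thin-position/transition analysis on a fixed sweep-out, which is essentially Stocking's contribution over Rubinstein's original PL min-max); and (ii) the combinatorial argument that the compressing data at $t^\ast$ coalesces into exactly one octagon or one tubed piece in a single tetrahedron and leaves every other tetrahedron normal. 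This second step is not a short ``careful combinatorial analysis''; it occupies most of Stocking's paper. One small sign-check: near $t=0$ the level $H_t$ is a tiny boundary of a neighbourhood of $\Sigma_U$, and the first normalising isotopies push it \emph{toward} $\Sigma_U$, i.e.\ into $U$, not into $V$ as you wrote; this does not affect the structure of the argument but matters when one tracks the labels rigorously.

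In short: your proposal correctly reconstructs the strategy of the cited theorem, but what you describe as ``the main obstacle'' is in fact the entire substance of the proof, and is not something the paper attempts to supply either. If you wanted a self-contained treatment you would need to reproduce Stocking's analysis (or Rubinstein's min-max version) in full, which is well beyond a one-page sketch.
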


Although almost normal surfaces are useful, they can sometimes be technically challenging to work with.
We therefore apply the following result.

\begin{proposition}
\label{Prop:FromAlmostNormalToNormal}
Suppose that $S$ is almost normal with respect to a triangulation $\calT$. Then $S$ is isotopic to a surface which is normal with respect to the first barycentric subdivision $\calT^{(1)}$.
\end{proposition}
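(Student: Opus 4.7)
The plan is to isotope $S$ in three stages, progressively adjusting its position relative to the $0$-, $1$-, and $2$-skeleta of $\calT^{(1)}$. First, a small generic ambient isotopy arranges that $S$ is transverse to the $1$-skeleton of $\calT^{(1)}$; in particular, $S$ avoids the midpoint of every edge of $\calT$, these midpoints being the new vertices of $\calT^{(1)}$ lying in edges of $\calT$. Second, on each face $f$ of $\calT$, the normal arcs of $S \cap f$ can be isotoped rel their (now fixed) endpoints to be concatenations of normal arcs in the six small triangles subdividing $f$ in $\calT^{(1)}$; this is an elementary combinatorial fact about normal arcs in the barycentric subdivision of a triangle.

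Third, we treat each tetrahedron $\Delta$ of $\calT$ in turn. If $S \cap \Delta$ consists only of normal pieces (triangles and squares), each such piece is a convex planar polygon; after a small perturbation inside $\Delta$ that makes it miss the barycenter $c$ and the four face-barycenters, its intersection with any small tetrahedron of $\calT^{(1)}|_\Delta$ is the intersection of a plane with a tetrahedron that avoids all vertices. By convexity, such an intersection is a triangle, a square, or empty; hence a normal disc in the small tetrahedron.

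The crux is the distinguished tetrahedron $\Delta_0$ containing the almost normal piece. For a tubed piece, the two constituent normal discs are handled as above, while the connecting tube---an annular neighbourhood of an arc of an edge $e$ of $\Delta_0$---can be arranged to meet each small tetrahedron of $\calT^{(1)}$ around $e$ in a single normal square. For an octagon $O$ separating $\{v_1, v_2\}$ from $\{v_3, v_4\}$, we explicitly position its eight corners on the four ``mixed'' edges of $\Delta_0$, one per sub-edge of $\calT^{(1)}$: for instance, corners at barycentric coordinates $(2/3, 0, 1/3, 0)$ and $(1/3, 0, 2/3, 0)$ on the edge $v_1 v_3$. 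The interior of $O$ is then filled in as a saddle-like disc passing near the central barycenter $c$.

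The main obstacle is the octagon case. Showing that our explicit disc really is normal in $\calT^{(1)}|_{\Delta_0}$ requires verifying, for each of the (at most) $24$ small tetrahedra that $O$ crosses, that the intersection is a triangle or a square meeting each face of the small tetrahedron in a normal arc. The $\ZZ_2 \times \ZZ_2$ symmetry swapping $v_1 \leftrightarrow v_2$ and $v_3 \leftrightarrow v_4$ cuts down the number of configurations substantially, but the combinatorial bookkeeping---tracing the octagon arc by arc through the ring of small tetrahedra around $c$ and matching up boundary arcs across internal $2$-faces---remains the delicate part of the argument.
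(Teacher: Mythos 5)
Your Stages 1 and 2 (arranging transversality with the $1$-skeleton of $\calT^{(1)}$ and making $S \cap f$ a concatenation of normal arcs in each face $f$) coincide with the paper's opening moves. Where you diverge is in Stage 3, and that is exactly where the gap appears.

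The paper does not construct explicit discs in the interior of each tetrahedron $\Delta$. Instead it observes that once $S \cap \bdy \Delta$ has been fixed as a union of curves, each normal or almost normal piece is isotopic rel boundary to a subsurface of $\bdy \Delta$ pushed slightly into the interior: a triangle to a vertex-linking disc, a square to a disc containing an edge, an octagon to one of the two discs the curve cuts off in $\bdy \Delta$, and the tubed annulus to the annular region in $\bdy \Delta$ between its two boundary curves. Lemma~\ref{Lem:NormaliseSubsurfaceOfBoundary} (and its multi-component version, Lemma~\ref{Lem:NormaliseMultipleSubsurfacesOfBoundary}) then shows by a four-case count --- how many of the vertices of each small tetrahedron land inside the chosen subsurface of $\bdy \Delta$ (necessarily $0$, $1$, $2$, or $3$, never $4$, since the barycentre of $\Delta$ is interior) --- that the pushed-in surface meets every small tetrahedron in a normal triangle or square. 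The crucial structural fact making this work is that every small tetrahedron of $\calT^{(1)}|_\Delta$ meets $\bdy \Delta$ in exactly one face. No case-by-case tracing through the ring of $24$ small tetrahedra is ever needed; the octagon and the tube are handled by the same uniform lemma as the triangles and squares.

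Your proposal, by contrast, tries to build the octagon directly through the interior of $\Delta$, as a ``saddle-like disc passing near the central barycenter,'' and you yourself flag the resulting verification --- checking normality in each of the (up to) $24$ small tetrahedra the disc crosses --- as ``the delicate part of the argument'' that ``remains.'' That is a genuine gap, not a detail: the entire content of the proposition in the exceptional tetrahedron is exactly that verification, and choosing to route the disc near the barycentre (the vertex shared by all $24$ small tetrahedra) makes the bookkeeping maximally hard. Since the octagon is a disc in a ball, it is isotopic rel boundary to either complementary disc in $\bdy \Delta$, and using that freedom to push the disc toward the boundary rather than through the centre is the key idea you are missing. The same remark applies to your treatment of the tube, which you assert meets each small tetrahedron around $e$ in a single normal square without justification; the paper sidesteps this by replacing the tubed annulus with a pushed-in annular subsurface of $\bdy \Delta$. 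Your planar-convexity argument for the genuinely normal tetrahedra is a workable alternative to the paper's method (a plane in general position avoiding the vertices of a small tetrahedron does cut it in a normal triangle or quadrilateral), but it cannot be extended to the octagon, which is not planar, so it does not rescue the missing case.
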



To prove this, we first require some lemmas.

\begin{lemma}
\label{Lem:NormaliseSubsurfaceOfBoundary}
Suppose that $\calT$ is a triangulation of a three-ball $B$.
Suppose that for each tetrahedron $\Delta$ of $\calT$ the preimage of $\bdy B$ in $\Delta$ is either empty or is a single vertex, edge, or face.
Suppose that $F$ is a subsurface of $\bdy B$ so that $\bdy F$ is normal with respect to $\bdy B$ and intersects each edge of $\calT$ at most once.
Then, after pushing the interior of $F$ slightly into the interior of $B$, the resulting properly embedded surface $F'$ is normal.
\end{lemma}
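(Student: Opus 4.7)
I would take $F'$ to be a small pushoff of $F$ into the interior of $B$ along a collar of $\bdy B$, leaving $\bdy F' = \bdy F$ fixed, and then verify, tetrahedron by tetrahedron, that $F' \cap \Delta$ is a disjoint union of normal triangles and squares. For each $\Delta \in \calT$, write $\sigma = \Delta \cap \bdy B$; by hypothesis $\sigma$ is empty or a single simplex of $\Delta$. For a sufficiently thin pushoff, $F' \cap \Delta$ lies in a neighbourhood of $\sigma$ inside $\Delta$ and is determined by the local pattern of $F$ near $\sigma$. The structural fact that drives the case analysis is that, since $\bdy F$ is normal and meets each edge of $\calT$ at most once, every face of $\bdy B$ contains at most one normal arc of $\bdy F$; this severely restricts the local combinatorics of $F$ near every simplex of $\bdy B$.

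The argument then splits into four cases on $\sigma$. If $\sigma = \emptyset$ then $F' \cap \Delta = \emptyset$. If $\sigma = f$ is a face of $\Delta$, the at-most-one-arc constraint forces $F \cap f$ into one of four possibilities---empty, all of $f$, a triangle cut off at a vertex of $f$ by the unique normal arc, or the complementary quadrilateral---whose pushoffs are respectively the empty set, a normal triangle parallel to $f$, a normal triangle cutting off the corresponding vertex of $\Delta$, and a normal square. If $\sigma = v$ is a vertex, the edge-crossing bound along the edges of $\bdy B$ incident to $v$ forbids $\bdy F$ from having two nested loops around $v$, so near $v$ the subsurface $F$ is either a single disc containing $v$ or disjoint from $v$; the pushoffs are respectively a normal triangle cutting off $v$ in $\Delta$, or the empty set.

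The remaining case $\sigma = e$, an edge of $\Delta$ lying in exactly two faces $g, g'$ of $\bdy B$ (both in tetrahedra other than $\Delta$), splits on $\bdy F \cap e$. When $\bdy F$ avoids $e$, a neighbourhood of $e$ in $\bdy B$ is either entirely in $F$---giving a normal square in $\Delta$ separating $e$ from its opposite edge---or disjoint from $F$, giving the empty set. When $|\bdy F \cap e| = 1$, say at a point $p$, the normal arcs of $\bdy F$ in $g$ and $g'$ concatenate at $p$, and $F$ occupies the half of a neighbourhood of $e$ on one side of $\bdy F$. Tracking the boundary of the pushed disc in $\Delta$ through $p$ and through the three faces of $\Delta$ incident to the endpoint of $e$ on the $F$ side shows that $F' \cap \Delta$ is a normal triangle cutting off that endpoint. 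The local pieces assembled over all $\Delta$ agree on shared interior faces of $\calT$, because both sides induce the same normal arcs there.

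\textbf{Main obstacle.} The fiddliest step is the edge case $\sigma = e$ with $|\bdy F \cap e| = 1$. The single boundary point $p$ sits on both faces of $\Delta$ containing $e$, and one must verify by a careful combinatorial check that the two boundary arcs of the pushed disc on those two faces terminate at $p$ and combine with a third normal arc on the face of $\Delta$ opposite the non-$F$ endpoint of $e$ to form the boundary of a normal triangle. The at-most-one-arc-per-face constraint keeps the enumeration manageable, and the required compatibility across shared internal faces of $\calT$ follows because the induced normal arcs there are dictated by those of $\bdy F$ on adjacent faces of $\bdy B$.
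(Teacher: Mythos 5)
Your proof is correct and is, at its core, the same local tetrahedron-by-tetrahedron verification that the paper uses; the difference is in how the cases are organized. You split first on the type of $\sigma = \Delta \cap \bdy B$ (empty, vertex, edge, face) and then sub-split on where $\bdy F$ sits relative to $\sigma$, which in the edge case forces you into the arc-tracing argument you flag as the ``main obstacle.'' The paper instead organizes the entire analysis by a single invariant: the number of vertices of $\Delta$ lying in $F$. Since $\bdy F$ meets each edge of $\calT$ at most once, any two vertices of $\Delta$ that both lie in $F$ must be joined by an edge of $\Delta$ lying entirely in $F$ (and similarly three such vertices span a face contained in $F$), and four is impossible by the hypothesis on $\Delta \cap \bdy B$. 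This yields exactly four cases --- $0$ vertices (empty), $1$ (triangle cutting off that vertex), $2$ (square separating the contained edge from the opposite edge), $3$ (triangle parallel to the contained face) --- and in particular your fiddly edge case ``$\sigma$ an edge with $|\bdy F \cap e| = 1$'' is subsumed without any arc tracing: one endpoint of $e$ is in $F$, the other is not, so it is simply the one-vertex case. Your route is sound, but the vertex-counting observation is what makes the argument short and is worth internalizing.
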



\begin{proof}
Consider any tetrahedron $\Delta$ of $\calT$.
We divide into cases depending on the number of vertices of $\Delta$ contained in $F$.

If there are none, then $F' \cap \Delta$ is empty.
Suppose that there is exactly one vertex $v$ of $\Delta$ contained in $F$.
Then $F' \cap \Delta$ is a normal triangle separating $v$ from the remaining three vertices of $\Delta$.
This triangle meets $\bdy B$ in the set $(\bdy F) \cap \Delta$.
Suppose instead that there are exactly two vertices of $\Delta$ contained in $F$.
Then there is an edge $e$ of $\Delta$ contained in $F$.
In this case $F' \cap \Delta$ is a normal square separating $e$ from the remaining two vertices of $\Delta$.
Again, this square meets $\bdy B$ in the set $(\bdy F) \cap \Delta$.
Suppose instead that there are exactly three vertices of $\Delta$ contained in $F$.
Then there is a face $f$ of $\Delta$ contained in $F$.
In this case $F' \cap \Delta$ is again a normal triangle, separating $f$ from the final vertex of $\Delta$.
Also, this triangle is disjoint from $\bdy B$.
Note that, by assumption, we cannot have all four vertices of $\Delta$ contained in $F$.
\end{proof}

\begin{lemma}
\label{Lem:NormaliseMultipleSubsurfacesOfBoundary}
Suppose that $\calT$ is a triangulation of a three-ball $B$.
Suppose that for each tetrahedron $\Delta$ of $\calT$ the preimage of $\bdy B$ in $\Delta$ is either empty or is a single vertex, edge, or face.
Let $F_1, \dots, F_n$ be a collection of subsurfaces of $\bdy B$.
Suppose that each $\bdy F_i$ is normal and intersects each edge of $\calT$ at most once.
Suppose also that for each $i \not= j$, $\bdy F_i$ and $\bdy F_j$ are disjoint and either $F_i \subset F_j$ or $F_j \subset F_i$.
Then we may push the interiors of $F_1, \dots, F_n$ slightly into the interior of $B$ so that $F'$, the resulting union of surfaces, is properly embedded and normal.
\end{lemma}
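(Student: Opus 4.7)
The plan is to reduce to the previous lemma by pushing the $F_i$ into $B$ at distinct depths, using the nesting hypothesis to keep the pushed surfaces pairwise disjoint.

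First I would use the nesting hypothesis to totally order the $F_i$ by inclusion, reindexing so that $F_1 \subseteq F_2 \subseteq \cdots \subseteq F_n$ (discarding duplicates if any). I then fix a collar $\bdy B \times [0, 1] \hookrightarrow B$ of $\bdy B$ in $B$, and small depths $0 < t_1 < t_2 < \cdots < t_n < 1$. For each $i$, I push the interior of $F_i$ into the collar to depth $t_i$, interpolating smoothly near $\bdy F_i$ so that the pushed surface $F_i'$ has $\bdy F_i' = \bdy F_i$. Applying the preceding lemma to each $F_i$ individually shows that $F_i'$ is normal.

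Disjointness of the $F_i'$ on $\bdy B$ follows from the hypothesis that the $\bdy F_i$ are pairwise disjoint. Inside the collar, for $i < j$ we have $F_i \subseteq F_j$ and $F_i'$ is pushed to a shallower depth than $F_j'$; with a careful choice of the interpolation near the boundary curves, $F_i'$ and $F_j'$ occupy disjoint regions of the collar.

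The main obstacle is to verify normality and disjointness of the family $\{F_i' \cap \Delta\}$ inside each individual tetrahedron $\Delta$ of $\calT$. For each $i$, let $V_i$ denote the set of vertices of $\Delta$ lying in $F_i$. By the analysis in the previous lemma, $F_i' \cap \Delta$ is either empty or is a normal triangle or square separating $V_i$ from its complement in the vertex set of $\Delta$. Since $V_1 \subseteq V_2 \subseteq \cdots \subseteq V_n$, a short case analysis over $|V_i| \in \{0, 1, 2, 3\}$ shows that the corresponding normal discs bound nested subregions of $\Delta$, so they can be chosen pairwise disjoint; the distinct depths $t_i$ realise this nesting consistently across all tetrahedra of $\calT$.
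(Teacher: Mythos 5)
Your proof is correct and follows essentially the same strategy as the paper's: the paper pushes the innermost (minimal) $F_i$ closest to $\bdy B$ and recurses on the remainder, which is exactly your collar-with-increasing-depths construction with the recursion unwound. Your version is slightly more explicit about the tetrahedron-by-tetrahedron disjointness via the nesting $V_1 \subseteq \cdots \subseteq V_n$, but there is no substantive difference in approach.
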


\begin{proof}
We apply the construction in the proof of \reflem{NormaliseSubsurfaceOfBoundary} to each $F_i$, to form a normal surface $F'_i$.
We note that when $i \not=j$, we can arrange for $F'_i$ and $F'_j$ to be disjoint, as follows.
There is some $F_i$ with the property that no other $F_j$ lies inside it.
For this $F_i$, form the resulting surface $F'_i$, which we can view as lying extremely close to $\bdy B$.
When we form the remaining surfaces $F'_j$, we can ensure that they are disjoint from this $F'_i$.
In this way, the required surface is constructed recursively.
\end{proof}

\begin{proof}
[Proof of \refprop{FromAlmostNormalToNormal}]
We first specify the intersection between $S$ and the edges of $\calT^{(1)}$ lying in the one-skeleton of $\calT$.
By assumption, $S$ has an almost normal piece $P$ in some tetrahedron $\Delta$ of $\calT$.
The intersection between $P$ and each one-simplex of $\Delta$ is at most two points.
If it is exactly two points, then we arrange for these to lie in distinct edges of $\calT^{(1)}$.
We then arrange that the remaining points of intersection between $S$ and the one-skeleton of $\calT$ are disjoint from the vertices of $\calT^{(1)}$.

The intersection between $S$ and each two-simplex $F$ of $\calT$ consists of a collection of arcs that are normal with respect to $\calT$.
We may realise these arcs $F \cap S$ as a concatenation of normal arcs in the two-skeleton of $\calT^{(1)}$, in such a way that each arc of $F \cap S$ intersects each edge of $\calT^{(1)}$ at most once.

Thus, we have specified the intersection between $S$ and the simplices of $\calT^{(1)}$ lying in the two-skeleton of $\calT$.
The remainder of the three-manifold is a collection of tetrahedra of $\calT$. Consider any such tetrahedron $\Delta$.
This inherits a triangulation from $\calT^{(1)}$.
We have already specified $S \cap \bdy \Delta$.
This is a collection of normal curves in $\bdy \Delta$.

Suppose first that $\Delta$ does not contain the almost normal piece of $S$. Then $S \cap \Delta$ is a collection of triangles and squares in $\Delta$. Their boundary is a collection $C_1, \dots, C_n$ of normal curves in $\bdy \Delta$.
We need to specify, for each $C_i$, a subsurface $F_i$ of $\bdy \Delta$ that it bounds.
For each $C_i$ bounding a triangle in $\Delta$, we pick $F_i$ so that it contains a single vertex of $\Delta$.
The remaining $C_i$ bound normal squares in $\Delta$.
We pick the $F_i$ that these curves $C_i$ bound so that they are nested.
Then applying \reflem{NormaliseMultipleSubsurfacesOfBoundary}, we realise the discs bounded by these curves in $\Delta$ as normal surfaces with respect to $\calT^{(1)}$.

Suppose now that $\Delta$ does contain the almost normal piece $P$ of $S$.
Then $S \cap \Delta$ is equal to $P$ plus possibly some triangles and squares.
In the case where $P$ is an octagon, its boundary divides $\bdy \Delta$ into two discs, and we pick one of these discs to be the relevant $F_i$.
In the case where $P$ is obtained by tubing together two normal discs in $\Delta$, the boundary curves of these discs cobound an annulus in $\bdy \Delta$, and we set $F_i$ to be this annulus.
The remaining components of $S \cap \Delta$ are triangles and squares, and squares can only arise in the case where $P$ is tubed.
For each triangle with boundary $C_j$, we pick $F_j \subset \bdy \Delta$ so that it contains a single vertex of $\Delta$.
For each square with boundary $C_j$, we pick $F_j \subset \bdy \Delta$ so that
it is disjoint from the annulus $F_i$ considered above.
Applying \reflem{NormaliseMultipleSubsurfacesOfBoundary} again, we arrange for $S \cap \Delta$ to be normal with respect to $\calT^{(1)}$.
\end{proof}

\section{Handle structures}
\label{Sec:HandleStructures}

Suppose that $M$ is a compact, connected, oriented three-manifold.
Suppose that $\calH$ is a handle decomposition of $M$.
For example, we may obtain $\calH$ by taking the dual of a triangulation.

\begin{definition}
\label{Def:Standard}
Suppose that $S$ is a surface, properly embedded in $M$.
We say that $S$ is \emph{standard} with respect to $\calH$ if the following properties hold.
\begin{itemize}
\item
The intersection of $S$ and any zero-handle $D^0 \cross D^3$ is a disjoint union of properly embedded discs.
\item
The intersection of $S$ and any one-handle $D^1 \cross D^2$ is of the form $D^1 \cross A$, where $A$ is a disjoint union of arcs properly embedded in $D^2$.
\item
The intersection of $S$ and any two-handle $D^2 \cross D^1$ is of the form $D^2 \cross P$, where $P$ is a finite collection of points in the interior of $D^1$.
\item
The intersection of $S$ and any three-handle $D^3 \cross D^0$ is empty. \qedhere
\end{itemize}
\end{definition}

\begin{definition}
Let $M$ be a closed three-manifold with a handle structure $\calH$. A disc properly embedded in a zero-handle $H_0$ of $\calH$ is \emph{normal} if
\begin{itemize}
\item its boundary lies within the union of the one-handles and two-handles;
\item its intersection with each one-handle is a collection of arcs;
\item it runs over each component of intersection between $H_0$ and the two-handles in at most one arc, and this arc respects the product structure on the two-handles.
\end{itemize}
A disc properly embedded in a one-handle $H_1$ of $\calH$ is \emph{normal} if
\begin{itemize}
\item it respects the product structure on $H_1$;
\item its boundary lies within the union of the zero-handles and two-handles;
\item it runs over each component of intersection between $H_1$ and the two-handles in at most one arc. \qedhere
\end{itemize}
\end{definition}

\begin{definition}
Let $M$ be a closed three-manifold with a handle structure $\calH$. A surface properly embedded within $M$ is \emph{normal} if it is standard and its intersection with each zero-handle is normal. This implies that its intersection with each one-handle is also normal.
\end{definition}

When $S$ is a normal surface embedded in a closed triangulated three-manifold $M$, it naturally becomes a standard surface in the dual handle structure.
When a three-manifold with a handle structure is decomposed along a standard surface $S$, the resulting three-manifold $M \cut S$ inherits a handle structure.
Thus, we deduce that when a closed triangulated three-manifold is cut along a normal surface, then $M \cut S$ inherits a handle structure.

Our handle structures satisfy the following condition.

\begin{definition}
A handle structure of a three-manifold is \emph{locally small} if the following conditions hold:
\begin{itemize}
\item
the intersection between any zero-handle and the union of the one-handles consists of at most $4$ discs and
\item
the intersection between any one-handle and the union of the two-handles consists of at most $3$ discs. \qedhere
\end{itemize}
\end{definition}

The handle structure that is dual to a triangulation is locally small. Moreover, when a three-manifold $M$ with a locally small handle structure
is cut along a normal surface $S$,
the resulting handle structure is again locally small.

\section{Core curves of solid tori}
\label{Sec:SolidTori}

Over the next two sections, we prove \refthm{DerivedSolidTorus}. 
This is achieved using affine handle structures, first introduced in \cite{Lackenby:CoreCurves}.
Here is the definition.

\begin{definition}
\label{Def:AffineHandle}
An \emph{affine handle structure} on a three-manifold $M$ is a handle structure where each
zero-handle and one-handle is identified with a compact polyhedron in $\RR^3$,
so that
\begin{itemize}
\item each face of each polyhedron is convex (but the polyhedron identified with a zero-handle
need not be convex);
\item whenever a zero-handle and one-handle intersect, each component of intersection is
identified with a convex polygon in $\RR^2$, in such a way that the inclusion
of this intersection into each handle is an affine map onto a face of the
relevant polyhedron;
\item for each zero-handle $H_0$, each component of intersection with a two-handle, three-handle or $\bdy M$ is a union of faces of the polyhedron associated with $H_0$;
\item
the polyhedral structure on each one-handle is the product of a convex two-dimensional polygon and an interval. \qedhere
\end{itemize}
\end{definition}

\begin{definition}
\label{Def:CanonicalAffine}
Let $\calT$ be a triangulation of a compact three-manifold $M$ and let $\calH$ be the dual handle structure.
Then the \emph{canonical affine structure} on $\calH$ realises each zero-handle as a truncated octahedron.
Specifically, one realises each tetrahedron of $\calT$ as regular and euclidean with side length $1$, and then one slices off the edges and the vertices to form a truncated octahedron.
Each one-handle of $\calH$ corresponds to a face of $\calT$ that does not lie wholly in $\bdy M$, and hence corresponds to a pair of hexagonal faces of the truncated octahedra that are identified.
Thus, the one-handle is identified with the product of a hexagon and an interval.
(See \reffig{TruncatedOctahedron}.)
\end{definition}

\begin{figure}
  \includegraphics[width=2.5in]{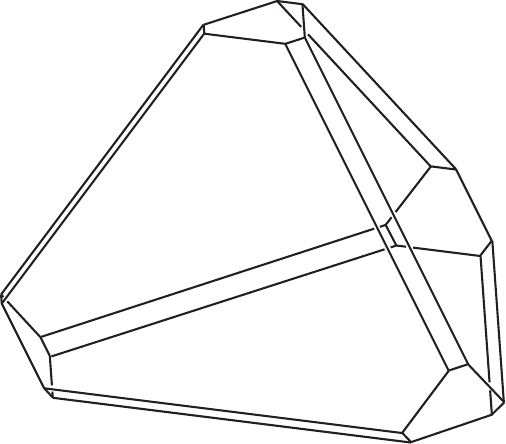}
  \caption{Each zero-handle is realised as a truncated octahedron.
    This is obtained from a tetrahedron by slicing off its vertices and edges.}
  \label{Fig:TruncatedOctahedron}
\end{figure}

The following theorem~\cite[Theorem~4.2]{Lackenby:CoreCurves} is the key result that goes into the proof of \refthm{DerivedSolidTorus}.

\begin{theorem}
\label{Thm:CoreSolidTorusAffine}
Let $\calH$ be a locally small, affine handle structure of the solid torus $M$.
Then $M$ has a core curve that intersects only the zero-handles and one-handles, that respects the product structure on the one-handles, that intersects each one-handle in at most $24$ straight arcs, and that intersects each zero-handle in at most $48$ arcs.
Moreover, this collection of arcs in each zero-handle is parallel (as a collection) to a collection of arcs $A$ in the boundary of the corresponding polyhedron. Finally, each component of $A$ intersects each face of the polyhedron
in at most $6$ straight arcs. \qed
\end{theorem}

\section{From affine handle structures to barycentric subdivisions}
\label{Sec:Barycentric}

In this section, the goal is prove Theorem \ref{Thm:DerivedSolidTorus}, which states that a core curve in a triangulated solid torus can be realised as a subcomplex in the triangulation's $51^{\mathrm{st}}$ barycentric subdivision. Many of the technicalities of this section could have been avoided if we had aimed just for a subcomplex of the $k^{\mathrm{th}}$ barycentric subdivision, for some universal but unspecified $k$ independent of the triangulation.

\begin{definition}
A triangulation of a subset $X$ of euclidean space is \emph{straight} if the inclusion of each simplex into $X$ is an affine map.
\end{definition}

\begin{definition}
\label{Def:ArcType}
Two arcs properly embedded in a polygon and disjoint from its vertices are of the same \emph{type} if there is an ambient isotopy taking one to the other, and
which keeps the arcs disjoint from the vertices.
\end{definition}

Note that a polygon with $k$ sides can support at most $2k-3$ disjoint straight arcs that are of distinct types.

\begin{lemma}
\label{Lem:ArcBecomeSimplicial}
Let $D$ be a euclidean polygon with a straight triangulation $\calT$. Let $\alpha$ be a properly embedded straight arc in $D$.
Then there is a realisation of the barycentric subdivision $\calT^{(1)}$ as a straight triangulation of $D$ that contains $\alpha$ as a
subcomplex.
\end{lemma}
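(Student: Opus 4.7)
The plan is to exploit the freedom in where the new vertices of $\calT^{(1)}$ are placed.  Combinatorially, $\calT^{(1)}$ introduces one new vertex in the interior of each positive-dimensional simplex of $\calT$, but as a straight triangulation the location of each such vertex may be varied within its simplex.  Since each $2$-simplex $\sigma$ of $\calT$ is convex, star-subdividing $\sigma$ from any interior point, using any choice of interior point on each edge of $\sigma$, gives a valid straight realisation of the barycentric subdivision of $\sigma$ into six non-degenerate triangles.  So the whole task reduces to a compatible choice, for each edge and each $2$-simplex of $\calT$, of the position of its new vertex, made so as to force $\alpha$ into the resulting one-skeleton.

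First I would reduce to the generic case, in which the interior of $\alpha$ avoids every vertex of $\calT$ and $\alpha$ is not contained in any edge of $\calT$; the remaining cases are handled by minor modifications described at the end.  In the generic case, being straight, $\alpha$ meets each edge of $\calT$ in at most one point and (by convexity) each $2$-simplex of $\calT$ in at most one straight chord.  For each edge $e$, define $m_e \in \interior e$ to be the unique point of $\alpha \cap e$, if non-empty (this includes endpoints of $\alpha$ lying in the interior of boundary edges), and otherwise to be the Euclidean midpoint of $e$.  For each $2$-simplex $\sigma$ met by $\alpha$, let $c_\sigma$ be any point in the relative interior of the chord $\alpha \cap \sigma$; for each $\sigma$ disjoint from $\alpha$, let $c_\sigma$ be the barycentre of $\sigma$.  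These are the new vertices of $\calT^{(1)}$.

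I would then realise the subdivision by, in each $\sigma$, joining $c_\sigma$ by straight segments to each vertex of $\sigma$ and to each of the three points $m_e$ on its edges.  Because $\sigma$ is convex and $c_\sigma$ is interior, these six segments are embedded in $\sigma$, pairwise disjoint away from $c_\sigma$, and the cyclic order of their outer endpoints around $c_\sigma$ matches the cyclic order of those points around $\bdy \sigma$; hence they tile $\sigma$ by six non-degenerate triangles.  Assembling over all $2$-simplices yields a straight triangulation of $D$ combinatorially equivalent to $\calT^{(1)}$.  Checking that $\alpha$ is a subcomplex is then immediate: for each $\sigma$ met by $\alpha$, the chord $\alpha \cap \sigma$ joins two vertices of $\calT^{(1)}$ (each either an $m_e$ or a vertex of $\calT$ at which $\alpha$ begins, ends, or passes through), and $c_\sigma$ lies on this chord by construction, so $\alpha \cap \sigma$ is the union of the two edges of $\calT^{(1)}$ joining $c_\sigma$ to those endpoints.

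The main obstacle is only to verify the non-degeneracy and compatibility of the star-subdivisions, and this is immediate from convexity of the $2$-simplices; the construction is otherwise rather free, which is precisely what allows $\alpha$ to be absorbed.  For the non-generic cases: if $\alpha$ passes through a vertex $v$ of $\calT$, the chord in each adjacent $\sigma$ terminates at $v$ (already a vertex of $\calT^{(1)}$) and we still place $c_\sigma$ on the chord as before; if $\alpha$ coincides with an edge $e$ of $\calT$ (necessarily spanning two boundary vertices with interior in $\interior D$), then choosing $m_e$ to be any interior point of $e$ makes $\alpha$ the union of the two halves of $e$ in $\calT^{(1)}$.
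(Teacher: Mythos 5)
Your proof is correct and takes essentially the same approach as the paper's: both exploit the freedom in placing the new vertices of the barycentric subdivision so that they land on $\alpha$ (at $\alpha \cap e$ for each edge $e$, and in the interior of each chord $\alpha \cap \sigma$). You spell out the non-degeneracy of the resulting star-subdivision and the edge cases in more detail, but the key idea is identical.
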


\begin{proof}
Since $\alpha$ is straight and $\calT$ is straight, the intersection between $\alpha$ and the interior of each one-simplex of $\calT$ is either all the interior of the one-simplex or at most one point.
If $\alpha$ does intersect the interior of a one-simplex in a point, place a vertex of $\calT^{(1)}$ at the point of intersection.
Similarly, if $\alpha$ intersects the interior of a two-simplex, it does so in a single arc, and we place a vertex
of $\calT^{(1)}$ in the interior of this arc. Hence, $\alpha$ becomes simplicial in $\calT^{(1)}$.
(See \reffig{SimplicialCurveDisc}.)
\end{proof}

\begin{figure}
  \includegraphics[width=3.5in]{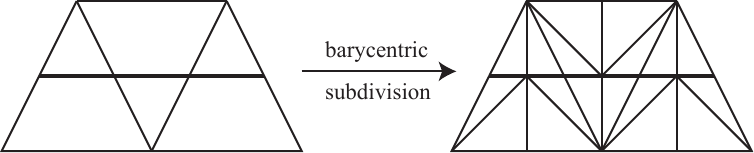}
  \caption{Making an arc simplicial}
  \label{Fig:SimplicialCurveDisc}
\end{figure}

Induction then gives the following.

\begin{lemma}
\label{Lem:ArcsBecomeSimplicial}
Let $D$ be a euclidean polygon with a straight triangulation $\calT$.
Let $A$ be a union of $k$ disjoint properly embedded straight arcs in $D$.
Then there is a realisation of $\calT^{(k)}$ as a straight triangulation of $D$ that contains $A$ as a subcomplex. \qed
\end{lemma}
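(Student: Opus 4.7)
The plan is to induct on $k$, the number of arcs, with $k = 0$ trivial (take $\calT^{(0)} = \calT$) and $k = 1$ being exactly \reflem{ArcBecomeSimplicial}.

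For the inductive step, I would first apply \reflem{ArcBecomeSimplicial} to the single arc $\alpha_1$, obtaining a straight realisation of $\calT^{(1)}$ in which $\alpha_1$ is a subcomplex. The arcs $\alpha_2, \dots, \alpha_k$ are unchanged as subsets of $D$ and so remain straight, properly embedded, and pairwise disjoint. I would then invoke the inductive hypothesis on the polygon $D$ equipped with the straight triangulation $\calT^{(1)}$ and the $k-1$ arcs $\alpha_2, \dots, \alpha_k$, yielding a straight realisation of $(\calT^{(1)})^{(k-1)} = \calT^{(k)}$ that contains $\alpha_2 \cup \dots \cup \alpha_k$ as a subcomplex.

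The remaining task is to check that $\alpha_1$ is still a subcomplex of this final realisation of $\calT^{(k)}$. The key observation will be that in any straight realisation of a barycentric subdivision of a straight triangulation, the new vertex inserted on each $1$-simplex $\sigma$ must lie on $\sigma$ itself (otherwise the realisation would fail to cover $D$, since the two $2$-simplices of the coarser triangulation meeting along $\sigma$ intersect only in $\sigma$). Iterating this across the $k-1$ further subdivisions, each $1$-simplex of $\calT^{(1)}$ contained in $\alpha_1$ is partitioned in $\calT^{(k)}$ into shorter straight subsegments whose union is still $\sigma \subset \alpha_1$, so $\alpha_1$ persists as a subcomplex. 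The only place where I would need the disjointness hypothesis---and this is not a genuine obstacle---is in noting that the new vertices required by the inductive construction to realise $\alpha_j$ ($j \geq 2$) lie on $\alpha_j$ and hence not on $\alpha_1$, so the inductive step does not disturb $\alpha_1$'s simplicial structure.
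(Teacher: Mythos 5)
Your overall plan is the paper's plan: the paper's proof of this lemma is literally ``Induction then gives the following,'' iterating \reflem{ArcBecomeSimplicial} once per arc. So the route is the same. But the specific justification you give for why $\alpha_1$ persists as a subcomplex after the later subdivisions contains a false step.

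You assert that in any straight realisation of a barycentric subdivision, ``the new vertex inserted on each $1$-simplex $\sigma$ must lie on $\sigma$ itself,'' on the grounds that otherwise the realisation would fail to cover $D$. This is not correct for interior edges. A straight realisation of $\calT^{(1)}$ only requires the $2$-simplices of $\calT^{(1)}$ to be affinely embedded with disjoint interiors, tiling $D$; it does not require each $2$-simplex of the coarser $\calT$ to be covered by its own six subdivision triangles. Indeed, for a square triangulated by two triangles along a diagonal, one can place the ``barycenter'' of the diagonal slightly off the diagonal, adjust the two face barycenters accordingly, and still obtain a valid straight realisation of $\calT^{(1)}$: the small triangles simply tile the square along a different interior locus. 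Your argument that the two coarse $2$-simplices ``intersect only in $\sigma$'' is not a constraint that the realisation must respect. Consequently, the inductive hypothesis as you invoke it --- producing \emph{some} realisation of $(\calT^{(1)})^{(k-1)}$ containing $\alpha_2\cup\dots\cup\alpha_k$ --- does not by itself force $\alpha_1$ to remain simplicial.

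The fix is small but needed. Either strengthen the inductive statement so that the realisation is additionally required to keep a prescribed subcomplex of $\calT$ simplicial, or observe that the construction in the proof of \reflem{ArcBecomeSimplicial} has the freedom to place the new vertex on any $1$-simplex of $\calT$ not met by $\alpha$ on that $1$-simplex (e.g.\ at its midpoint), and that one should exercise this freedom at each stage. Since $\alpha_j$ for $j\ge 2$ is disjoint from $\alpha_1$, it never crosses an edge lying in $\alpha_1$, so the construction of \reflem{ArcBecomeSimplicial} applied to $\alpha_j$ can be run so as to subdivide each edge of $\alpha_1$ along itself, keeping $\alpha_1$ simplicial. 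With that choice spelled out, your induction closes correctly. So: right approach and right conclusion, but the ``must lie on $\sigma$'' claim should be replaced by the ``can be chosen to lie on $\sigma$'' claim, with the latter justified by inspection of the construction in \reflem{ArcBecomeSimplicial}.
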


However, we can improve this in certain circumstances, as follows.

\begin{lemma}
\label{Lem:TwoArcTypes}
Let $D$ be a euclidean polygon  with a straight triangulation $\calT$.
Let $A$ be a union of at most $2^k$ disjoint properly embedded straight arcs in $D$, each with endpoints that are disjoint from the vertices of $D$, and that form at most two arc types.
Then there is  a realisation of $\calT^{(k+1)}$ as a straight triangulation of $D$ that contains $A$ as a subcomplex.
\end{lemma}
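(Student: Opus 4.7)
The plan is to proceed by induction on $k$, using \reflem{ArcBecomeSimplicial} as both the base case and the essential tool in the inductive step. The point of the ``two types'' hypothesis is that the arcs of each type form a parallel family and are therefore linearly ordered across the polygon; this will let us pick a single arc of $A$ that cleanly bisects the collection.

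The base case $k=0$ is immediate: $|A| \leq 1$ and \reflem{ArcBecomeSimplicial} gives a straight realisation of $\calT^{(1)}$ containing $A$ as a subcomplex. For the inductive step, the central claim is the existence of a \emph{bisecting arc} $\alpha \in A$ such that each of the two components of $D \setminus \alpha$ meets at most $2^{k-1}$ arcs of $A \setminus \{\alpha\}$. Let $n_1 \leq n_2$ denote the two type populations, so that $n_1 + n_2 \leq 2^k$ forces $n_1 \leq 2^{k-1}$. Enumerate the type-$2$ arcs as $\beta_1, \dots, \beta_{n_2}$, ordered so that $\beta_1$ is closest to the strip swept out by the type-$1$ family. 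Cutting along $\beta_i$ places $(i-1) + n_1$ arcs on one side and $n_2 - i$ on the other; taking $i = \max(1,\, n_2 - 2^{k-1})$, a short calculation using $n_1 + n_2 \leq 2^k$ bounds both counts by $2^{k-1}$. When only one type is present, the median arc bisects.

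With the bisecting arc $\alpha$ in hand, \reflem{ArcBecomeSimplicial} produces a straight realisation of $\calT^{(1)}$ in which $\alpha$ is a subcomplex. The arc separates $D$ into subpolygons $D_1, D_2$, each carrying the induced straight triangulation and each meeting $A \setminus \{\alpha\}$ in at most $2^{k-1}$ straight arcs that still form at most two types (parallel families in $D$ remain parallel in each $D_j$, and the new vertices from cutting do not separate members of a family). The inductive hypothesis, applied with parameter $k-1$ to each $(D_j, A \cap D_j)$, supplies a straight realisation of the $k$-th iterated barycentric subdivision of the inherited triangulation, containing the arcs as a subcomplex. Because barycentric subdivision commutes with restriction to subcomplexes, these combinatorially agree with $\calT^{(k+1)}$ restricted to $D_1$ and $D_2$. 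By arranging at each refinement that the new vertices placed on $\alpha$ are the same from both sides (for instance, at midpoints of the relevant one-simplices), the two realisations glue along $\alpha$ into a straight realisation of $\calT^{(k+1)}$ on $D$ containing all of $A$.

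The main subtlety is the bisection claim itself: because the two arc types occupy disjoint parallel strips, the type-$1$ family lies wholly on one side of every type-$2$ arc, so the split cannot be balanced symmetrically between the types. The index calculation above nevertheless achieves balance under the hypothesis $n_1 + n_2 \leq 2^k$. A minor technical point is the compatibility of the two iterated subdivisions along $\alpha$, which is handled by matching geometric choices on the two sides at each refinement step.
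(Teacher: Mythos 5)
Your proof follows the same strategy as the paper's: induct on $k$, locate a bisecting arc using the two-type hypothesis, make it simplicial via Lemma~\ref{Lem:ArcBecomeSimplicial}, cut, and recurse. You supply an explicit index calculation for the bisecting arc and spell out the gluing along $\alpha$, both of which the paper leaves implicit, but these are elaborations of the same argument rather than a different route.
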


\begin{proof}
We prove this by induction on $k$.
The case $k = 0$ is the statement of \reflem{ArcBecomeSimplicial}.
Let us prove the inductive step.
Since the arcs fall into at most two types, there is one component $\alpha$ of $A$ such that at most $|A|/2 \leq 2^{k-1}$ arcs lie on each side of it.
By \reflem{ArcBecomeSimplicial}, there is a realisation of $\calT^{(1)}$ as a straight triangulation of $D$ that contains $\alpha$ as a subcomplex.
Cutting $D$ along $\alpha$ gives two euclidean polygons, each of which contains at most two arc types. Inductively, the intersection between $A$ and these polygons may be made simplicial in $\calT^{(k+1)}$.
\end{proof}

\begin{lemma}
\label{Lem:SeveralArcTypes}
Let $D$ be a euclidean polygon  with a straight triangulation $\calT$. Let $A$ be a union of at most $2^k$ disjoint properly embedded arcs in $D$, each with endpoints that are disjoint from the vertices of $D$, and that form at most $n \geq 2$ arc types. Then there is  a realisation of $\calT^{(n+k-1)}$ as a straight triangulation of $D$ that contains $A$ as a
subcomplex.
\end{lemma}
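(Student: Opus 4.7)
The plan is to induct on $n$, using \reflem{TwoArcTypes} as the base case $n = 2$. For the inductive step, with $n \geq 3$, the aim is to choose a single arc $\alpha \in A$ along which to cut, apply \reflem{ArcBecomeSimplicial} to realise $\alpha$ as a subcomplex of $\calT^{(1)}$, and then invoke the inductive hypothesis on each of the two resulting subpolygons. Provided each subpolygon $D_i$ (with its induced straight triangulation inherited from $\calT^{(1)}$) admits a realisation of $\calT^{(n+k-2)}$ containing its arcs, combining them produces the desired realisation of $\calT^{(n+k-1)}$ containing $A$.

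The selection of $\alpha$ proceeds by cases on the distribution of arcs among the types. First, if some type $\tau$ consists of a single arc, take $\alpha$ to be that arc; the cut eliminates $\tau$ from both sides, leaving each $D_i$ with at most $n - 1$ types and at most $2^k$ arcs. The inductive hypothesis (or \reflem{TwoArcTypes} if $n - 1 = 2$) then gives each side fitting into $\calT^{(n+k-2)}$, as required. Otherwise every type has at least two arcs, so $|A| \geq 2n$. In this regime I would choose $\alpha$ to be an outermost arc of some type $\tau$, meaning an arc whose parallel family lies entirely on one side. Cutting along $\alpha$ creates an exterior subpolygon $D_1$ containing no $\tau$-arcs, so with at most $n - 1$ types, to which the inductive hypothesis directly applies. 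For the interior subpolygon $D_2$, the objective is to argue that $\tau$ can be chosen so that either some other type is entirely absent from $D_2$ (again reducing to $n - 1$ types) or the arc count in $D_2$ is at most $2^{k-1}$ (so that \reflem{TwoArcTypes} or the inductive hypothesis at smaller $k$ applies).

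The main obstacle will be the combinatorial verification in the second case: showing that an appropriate type and outermost arc exist so that $D_2$ lands in one of the two favourable situations. I expect this to rely on a pigeonhole or averaging argument over the $n$ types: if every outermost cut leaves all $n$ types represented on the interior, then the types must be distributed in a restricted enough way that the interior arc count is forced to be at most $2^{k-1}$. Once this point is established, the two cases together close the induction and yield the bound of $n + k - 1$ barycentric subdivisions.
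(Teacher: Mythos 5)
Your high-level plan (induct on $n$, cut along one well-chosen arc, apply the inductive hypothesis to both pieces) matches the paper, but the mechanism you propose for choosing $\alpha$ has a genuine gap, and the fallback you are reaching for is not how the argument closes. The missing observation is structural: if two disjoint arcs have the same type, they induce the same partition of the vertices of $D$ into two sides, so the band they cobound contains \emph{no} vertex of $D$; consequently (for straight arcs in a convex polygon, which is what is actually in play) no arc of any \emph{other} type can lie strictly between two arcs of the same type. Thus the arcs $a_1, \dots, a_m$ of a fixed type $\tau$ come with outer regions $R_0$ and $R_m$, and \emph{every} arc of every other type lives in $R_0$ or in $R_m$. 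The correct choice of $\alpha$ is then forced: take $a_1$ if $R_0$ contains at least one arc of another type, otherwise take $a_m$ (this is what the paper's phrase ``closest to other types of arcs'' means). With this choice, one side of the cut is missing $\tau$ and so has at most $n-1$ types, while the other side contains $\tau$ together with the types in the \emph{other} outer region only, which is again at most $n-1$ types (strictly fewer than $n-1$ of the non-$\tau$ types can lie there once the first outer region is nonempty). No counting of arcs and no reduction in $k$ is needed.

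Because you do not invoke the vertex-free-band fact, you cannot rule out that $D_2$ picks up all $n$ types, and you retreat to a secondary hope that $D_2$ has at most $2^{k-1}$ arcs, to be proved by a pigeonhole argument you have not carried out. This is the wrong target: for instance, with $n=3$ types of two arcs each, nested as $\tau$-arcs, then $\sigma$-arcs, then $\rho$-arcs, cutting along the outer $\tau$-arc gives a $D_2$ with all three types and $5 > 2^{k-1}=4$ arcs, so your criterion fails for that cut even though the correct cut (the inner $\tau$-arc) reduces types on both sides. The hypothesis of the pigeonhole you envisage (``every outermost cut leaves all $n$ types on the interior'') is in fact never satisfied once the band structure is used, so you would be attempting to prove a claim whose hypothesis is vacuous --- a sign the argument is pointed in the wrong direction. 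Your separate treatment of the case where some type has a single arc is correct but unnecessary once the structural fact is in hand, as that case is subsumed by the same recipe.
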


\begin{proof} We prove this by induction on $n$. The induction starts with $n=2$, which is the content of \reflem{TwoArcTypes}. To prove the inductive step, suppose that $n > 2$. Pick an arc type of $A$ and let $\alpha$ be a component of $A$ of this type that is closest to other types of arcs. Make $\alpha$ simplicial in $\calT^{(1)}$ using \reflem{ArcBecomeSimplicial}. Then cut along it, to give two euclidean polygons. In each, the arcs come in at most $n-1$ types. So, by induction, a further $n+k-2$ barycentric subdivisions suffice to make $A$ simplicial.
\end{proof}

\begin{lemma}
\label{Lem:PointsBecomeVertices}
Let $D$ be a euclidean polygon with a straight triangulation $\calT$. Let $P$ be a set of at most $2^k$ points in $D$. Then there is realisation of $\calT^{(k+1)}$ as a straight triangulation of $D$ and that contains $P$ in its vertex set.
\end{lemma}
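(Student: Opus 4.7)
The plan is to induct on $k$, mirroring the structure of the arc lemmas \reflem{ArcBecomeSimplicial}--\reflem{SeveralArcTypes}.

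For the base case $k = 0$ we have $|P| \le 1$. If $P$ is empty, the canonical realisation of $\calT^{(1)}$ suffices. Otherwise, write $P = \{p\}$ and let $\sigma \in \calT$ be the open simplex of minimal dimension containing $p$. If $\dim \sigma = 0$ then $p$ is already a vertex of $\calT^{(1)}$. Otherwise $\calT^{(1)}$ adds a new vertex $v_\sigma$ in the interior of $\sigma$; I place $v_\sigma$ at $p$, and place the remaining new vertices at their usual barycentres, perturbing within their respective simplices if necessary so that no two-simplex of $\calT^{(1)}$ becomes degenerate. Since each new vertex is free to move within its open simplex, the finitely many collinearity conditions that could spoil straightness can be avoided.

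For the inductive step, I look for a straight arc $\alpha$ properly embedded in $D$, disjoint from $P$, that separates $D$ into two subpolygons $D_1, D_2$ with $|P \cap D_i| \le 2^{k-1}$. In the convex setting (which covers the paper's intended applications, since the polygons arise as faces of convex polyhedra), such $\alpha$ is easy: choose a direction $\vec{v}$ so that the projections of the points of $P$ onto $\vec{v}$ are all distinct, and cut by a line perpendicular to $\vec{v}$ at the median height. Applying \reflem{ArcBecomeSimplicial} realises $\calT^{(1)}$ as a straight triangulation of $D$ with $\alpha$ as a subcomplex; this induces straight triangulations $\calT^{(1)}|_{D_i}$ of the two sides. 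Since iterated barycentric subdivision commutes with restriction to a subcomplex, invoking the inductive hypothesis for each triple $(D_i, \calT^{(1)}|_{D_i}, P \cap D_i)$ with $k-1$ in place of $k$ produces a straight realisation of $\calT^{(k+1)}|_{D_i}$ in which $P \cap D_i$ lies in the vertex set. Gluing along $\alpha$ then gives the required straight realisation of $\calT^{(k+1)}$ on $D$.

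The main obstacle I anticipate is ensuring that the two realisations on $D_1$ and $D_2$ agree along $\alpha$. The inductive applications are free to place the new vertices of $\calT^{(k+1)}|_{D_i}$ that land on $\alpha$ anywhere compatible with straightness, and a priori these placements need not match. I expect to resolve this by proving (and inducting on) a slightly strengthened form of the lemma in which a pre-specified straight realisation of $\calT^{(k+1)}|_{\bdy D}$ is given as data; then on both $D_1$ and $D_2$ one imposes the same canonical placement (for instance, iterated midpoints) along $\alpha$, after which the gluing is automatic. A secondary issue is that a non-convex $D$ would require a more delicate construction of the splitting arc $\alpha$, but this does not arise in the paper's applications.
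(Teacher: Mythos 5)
Your proposal is correct in spirit but diverges from the paper's argument in an instructive way.  The paper's inductive step picks a generic auxiliary point $x$ in $D$ and rotates a line through $x$ until the resulting chord $\alpha$ passes through a point of $P$ with fewer than $|P|/2$ points on each side; \reflem{ArcBecomeSimplicial} then makes $\alpha$ simplicial in $\calT^{(1)}$ \emph{and} turns that one point of $P$ into a vertex, so no separate base case is required.  You instead cut along a straight chord disjoint from $P$ (the median cut along a generic direction) and handle the one-point base case by relocating the barycentre of the minimal simplex containing $p$ to $p$, together with a small perturbation of the other barycentres.  That base case construction is sound: placing the new vertex anywhere in the interior of the corresponding open simplex still produces a straight, non-degenerate barycentric subdivision, so the perturbation isn't really needed but does no harm.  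Your median-cut choice of $\alpha$ requires $D$ to be convex (so that a line gives a single properly embedded chord), which, as you observe, holds in all the paper's applications; the paper's rotating-line argument needs the same hypothesis in practice.

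The one substantive gap is exactly the one you flagged: the two inductive applications to $D_1$ and $D_2$ each produce a straight realisation of $\calT^{(k+1)}$ restricted to their side, but nothing in the inductive statement forces those realisations to agree along $\alpha$, so the glued map might not be well defined.  Your proposed strengthening of the inductive hypothesis — carrying a pre-specified straight realisation of the subdivision on $\bdy D$ (and in particular imposing, say, iterated midpoints along $\alpha$) — is the right fix and is straightforward to carry through, since the points of $P$ never lie on $\alpha$ in your version.  Worth noting: the paper's own proof also does not discuss this gluing issue explicitly, so you have not introduced a new difficulty, merely surfaced one that is latent in the whole family of cut-and-recurse lemmas of \refsec{Barycentric}.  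Once you incorporate the strengthened inductive statement, your argument is complete and is a clean alternative to the paper's rotating-line construction.
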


\begin{proof}
Pick a point $x$ in $D$ that is disjoint from $P$ and that also does not lie on any line containing at least two point from $P$.
We can pick a straight arc $\alpha$ through $x$, so that at most half the points of $P$ lie on each side of $\alpha$.
This can be done as follows.
Pick any straight arc $\alpha$ through $x$ that misses $P$.
If this has $|P|/2$ vertices on each side, then we have our desired arc.
If not, then pick a transverse orientation on $\alpha$ that points to the side with more than $|P|/2$ points.
Start to rotate $\alpha$ around $x$.
By our general position hypothesis on $x$, at any given moment in time, the number of points on each side of $\alpha$ can jump by at most $1$.
By the time that $\alpha$ has rotated through angle $\pi$, the number of points on the side into which it points is less than $|P|/2$.
So, at some stage, the arc contains a point of $P$ and has fewer than $|P|/2$ points of $P$ on either side of it.
At this stage, we have our required arc $\alpha$.
By \reflem{ArcBecomeSimplicial}, we can make $\alpha$ simplicial in $\calT^{(1)}$, and if $\alpha \cap P$ is non-empty, we can also make it a vertex.
Cut along $\alpha$ and apply induction.
\end{proof}

\begin{lemma}
\label{Lem:ImproperArcsSimplicial}
Let $D$ be a euclidean polygon with a straight triangulation $\calT$.
Let $A$ be a union of $k$ disjoint straight arcs, with each endpoint being a vertex of $\calT$ or a point on $\bdy D$, and with interior in the interior of $D$.
Then there is realisation of $\calT^{(k)}$ as a straight triangulation of $D$ and that contains $A$ as a subcomplex.
\end{lemma}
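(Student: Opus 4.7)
The argument is by induction on $k$. The base case $k = 0$ is vacuous. For the inductive step, I pick one arc $\alpha \in A$, and first realise $\calT^{(1)}$ as a straight triangulation of $D$ containing $\alpha$ as a subcomplex, generalising \reflem{ArcBecomeSimplicial}. This is the only new ingredient; once it is done, the induction proceeds much as in the proof of \reflem{ArcsBecomeSimplicial}, except that I refine the ambient triangulation one arc at a time rather than cutting $D$ into subpolygons.

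To realise $\calT^{(1)}$ containing $\alpha$: for each one-simplex $e$ of $\calT$ that meets $\alpha$ in a single point of its interior (either a transverse crossing, or an endpoint of $\alpha$ lying in the interior of an edge of $\bdy D$), place the barycenter $b_e$ of $e$ at that point. For each two-simplex $\sigma$ of $\calT$ whose interior $\alpha$ traverses, place $b_\sigma$ on the chord $\alpha \cap \sigma$. In the case of an endpoint of $\alpha$ at a vertex $v$ of $\calT$, the chord $\alpha \cap \sigma$ for the two-simplex $\sigma$ into which $\alpha$ emanates runs from $v$ to a point of $\bdy \sigma$, and placing $b_\sigma$ on it forces the edge of $\calT^{(1)}$ from $v$ to $b_\sigma$ to lie along $\alpha$. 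All remaining barycenters are placed in the interiors of their respective simplices in any convenient way. Since each two-simplex $\sigma$ is convex and $b_\sigma$ lies in its interior, the six small triangles obtained by joining $b_\sigma$ to the vertices of $\sigma$ and to the chosen $b_e$'s tile $\sigma$ correctly. So the assignment yields a valid straight triangulation, in which $\alpha$ is by construction a subcomplex.

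With this in hand, the arcs in $A \setminus \{\alpha\}$ form $k - 1$ disjoint straight arcs in $D$ whose endpoints are vertices of $\calT$ (hence of $\calT^{(1)}$) or points of $\bdy D$, and whose interiors lie in the interior of $D$. Applying the inductive hypothesis with $\calT^{(1)}$ in place of $\calT$ produces a straight realisation of $(\calT^{(1)})^{(k-1)} = \calT^{(k)}$ containing these $k - 1$ arcs as a subcomplex. Since barycentric subdivision preserves existing subcomplexes, $\alpha$ remains simplicial in this refinement, so $\calT^{(k)}$ contains all of $A$.

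The main obstacle, relative to \reflem{ArcBecomeSimplicial}, is the possibility that $\alpha$ has an endpoint at an interior vertex of $\calT$. This is precisely what prevents the ``cut along $\alpha$'' strategy of \reflem{ArcsBecomeSimplicial} from applying directly, since cutting would produce a disc with a slit rather than a pair of subpolygons; this is why I induct by successively refining the ambient triangulation instead of by decomposing $D$.
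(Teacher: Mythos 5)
Your proof is correct and follows essentially the same strategy as the paper's: pick one arc, place the barycenters of the simplices it meets along the arc so that it becomes simplicial in a straight realisation of $\calT^{(1)}$, and then iterate on the remaining arcs (noting that the endpoint constraints persist under subdivision). The extra care you take with endpoints at a vertex of $\calT$, and your observation about why one refines rather than cuts here, are both consistent with the paper's argument and make explicit what the paper leaves implicit.
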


\begin{proof}
We prove this by induction on $k$.
Pick an arc $\alpha$ of $A$.
Since $\alpha$ is straight, the intersection between $\alpha$ and the interior of each one-simplex of $\calT$ is either all the interior of the one-simplex or at most one point.
If it does intersect the interior of this simplex in a point, place a vertex of $\calT^{(1)}$ at the point of intersection.
Similarly, if $\alpha$ intersects the interior of a two-simplex, it does so in a single arc, and we place a vertex of $\calT^{(1)}$ in the interior of this arc.
Hence, $\alpha$ becomes simplicial in $\calT^{(1)}$. We then inductively deal with the remaining $k-1$ arcs.
\end{proof}

\begin{lemma}
\label{Lem:SimplicialDiscsInPolyhedronAllParallel}
Let $\calT$ be a straight triangulation of a euclidean polyhedron $P$.
Let $C$ be a union of at most $2^k$ disjoint simple closed curves in $\bdy P$ that are simplicial in $\calT$ and that are topologically parallel in $\bdy P$.
Then there is realisation of $\calT^{(2k)}$ as a straight triangulation of $P$ such that $C$ bounds a union of disjoint properly embedded discs in $P$ that are simplicial in the triangulation.
\end{lemma}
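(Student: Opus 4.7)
The plan is to induct on $k$. The base case $k=0$ with $|C|=0$ is vacuous. For the inductive step I would exploit that $P$ is topologically a $3$-ball (being realised as a zero-handle), so $\bdy P$ is a sphere, and a family of disjoint parallel simple closed curves on $S^2$ is concentric; I may therefore linearly order the curves of $C$ and pick a ``middle'' curve $\alpha \in C$ for which at most $2^{k-1}$ curves of $C \setminus \{\alpha\}$ lie on each side of $\alpha$ in $\bdy P$. The scheme is to construct a properly embedded simplicial disc $D \subset P$ bounded by $\alpha$ in some realisation of $\calT^{(2)}$, then cut $P$ along $D$ and apply induction to each piece.

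The heart of the argument is a sub-lemma that a single simplicial curve $\alpha \subset \bdy P$ bounds a properly embedded simplicial disc in some realisation of $\calT^{(2)}$. I would prove this by a cone construction: place a vertex $v$ in the interior of a zero-handle of $\calT$, identifying $v$ with a tetrahedron-barycenter vertex of $\calT^{(1)}$ whose position is freely adjustable in the straight realisation. The straight cone with apex $v$ over $\alpha$ is a disc $D$ with $\bdy D = \alpha$. Each radial edge $v\alpha_i$, where $\alpha_i$ is a vertex of $\alpha$, crosses the $2$-skeleton of $\calT$ in finitely many points; by placing suitable barycenter vertices of $\calT^{(1)}$ at those crossings, via the three-dimensional analogue of \reflem{ArcBecomeSimplicial} and \reflem{ImproperArcsSimplicial}, the radial edges become simplicial. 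Each cone triangle $T_i = v\alpha_i\alpha_{i+1}$ then intersects each tetrahedron in a convex polygon, and the further interior vertices supplied by $\calT^{(2)}$ allow these polygons to be triangulated simplicially by a three-dimensional adaptation of \reflem{ArcsBecomeSimplicial} and \reflem{SeveralArcTypes}.

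Cutting $P$ along the resulting simplicial disc $D$ yields two polyhedra $P_1, P_2$, each inheriting a straight triangulation from $\calT^{(2)}$ and each containing at most $2^{k-1}$ curves of $C \setminus \{\alpha\}$ on its boundary, still simplicial and still topologically parallel. Applying the inductive hypothesis to each piece with parameter $k-1$ provides $2(k-1)$ further subdivisions in which every remaining curve bounds a simplicial disc; regluing along $D$, whose simplicial structure is compatible in both pieces, produces the required realisation of $\calT^{(2k)}$ on $P$ with every curve in $C$ bounding a simplicial disc.

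The main obstacle is the sub-lemma, specifically in verifying that two subdivisions really suffice to realise the cone simplicially. Distinct radial edges $v\alpha_i$ and $v\alpha_j$ may cross a common face of $\calT$ at different points, so the unique barycenter vertex added in $\calT^{(1)}$ per face cannot host all such crossings; the second subdivision must be deployed carefully to supply the needed additional interior vertices, while simultaneously triangulating each cone triangle's intersection with every tetrahedron. This is a bookkeeping exercise that mirrors the trade-off between the number of arcs (or points) and the number of subdivisions exhibited by \reflem{TwoArcTypes}, \reflem{SeveralArcTypes}, and \reflem{PointsBecomeVertices}, but lifted from the two-dimensional to the three-dimensional setting.
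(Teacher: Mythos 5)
Your overall plan (induct on $k$, pick a ``middle'' curve $\alpha$, build a simplicial disc for it using two subdivisions, cut $P$, and recurse) matches the paper's. The gap is exactly where you suspect: the cone construction cannot be made simplicial in two subdivisions, nor in any \emph{fixed} number of subdivisions. The curve $\alpha$ may have an arbitrarily large number $m$ of edges (bounded only by the size of $\calT$, not by any constant), and a cone with apex $v$ over $\alpha$ consists of $m$ flat triangles. All $m$ radial edges $v\alpha_i$ can cross a single interior face $f$ of $\calT$, so $f \cap D$ is a union of up to $m$ line segments, possibly of many arc types. By \reflem{ArcsBecomeSimplicial} or even \reflem{SeveralArcTypes}, the number of barycentric subdivisions needed to make these segments simplicial grows with $m$ (or with the number of arc types). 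Barycentric subdivision supplies only a bounded number of new vertices per simplex, so no constant number of subdivisions can absorb an unbounded number of crossings of the 2-skeleton of $\calT$; this is precisely the quantitative trade-off that \reflem{TwoArcTypes}, \reflem{SeveralArcTypes}, and \reflem{PointsBecomeVertices} make explicit, and for the cone it runs in the wrong direction.

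The paper sidesteps the cone entirely and uses a push-in/collar device, in the same spirit as \reflem{PushArcsIntoInterior}. The second barycentric subdivision of any simplicial complex contains, as a subcomplex, a simplicial regular neighbourhood $N \cong \bdy P \times [0,1]$ of the boundary (with $\bdy P \times \{0\} = \bdy P$). Within $N$, the annulus $C' \times [0,1]$ and the pushed-in sphere $\bdy P \times \{1\}$ are \emph{automatically} subcomplexes of $\calT^{(2)}$ --- no further subdivisions are required no matter how many edges $C'$ or $\bdy P$ have, because the combinatorics of the collar is inherited directly from the combinatorics of $\bdy P$, which is already a subcomplex of $\calT$. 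The required disc bounded by $C'$ is the union of the annulus $C' \times [0,1]$ with the subdisc of $\bdy P \times \{1\}$ that $C' \times \{1\}$ cuts off. This disc is piecewise flat, so cutting $P$ along it again yields euclidean polyhedra with straight triangulations, and the induction proceeds. This collar idea is what your proposal is missing, and without it the count of subdivisions is not under control.
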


\begin{proof}
We prove this by induction on $k$.
Since the components of $C$ are all parallel in $\bdy P$, there is some component $C'$ of $C$ that so that each component of $\bdy P - C'$ contains at most half the components of $C$.
We may realise a regular neighbourhood $N$ of $\bdy P$ as a simplicial subset of $\calT^{(2)}$.
This is homeomorphic to $\bdy P \times [0,1]$, where $\bdy P \times \{ 0 \} = \bdy P$.
The annulus $C' \times [0,1]$ may be realised as simplicial in $\calT^{(2)}$.
The curve $C' \times \{ 1 \}$ bounds a disc in $\bdy N - \bdy P = \bdy P \times \{ 1 \}$.
The union of this disc with the annulus $C' \times [0,1]$ is one of the required discs.
If we cut $P$ along this disc, the result is two polyhedra $P_1$ and $P_2$, with straight triangulations $\calT_1$ and $\calT_2$.
The intersection between $C - C'$ and each $P_i$ consists of at most $2^{k-1}$ curves.
By induction, these curves bound simplicial discs in $\calT_i^{(2k-2)}$.
Thus, $C$ bounds simplicial discs in $\calT^{(2k)}$.
\end{proof}

\begin{lemma}\label{Lem:SimplicialDiscsInPolyhedron}
Let $\calT$ be a straight triangulation of a euclidean polyhedron $P$.
Let $C$ be a union of at most $2^k$ disjoint simple closed curves in $\bdy P$ that are simplicial in $\calT$.
Suppose that the maximal number of pairwise non-parallel components of $C$ is $n$.
Then there is realisation of $\calT^{(2k+2n-2)}$ as a straight triangulation of $P$ such that $C$ bounds a union of disjoint properly embedded discs in $P$ that are simplicial in the triangulation.
\end{lemma}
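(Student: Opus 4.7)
The plan is to induct on $n$, with the base case $n=1$ being \reflem{SimplicialDiscsInPolyhedronAllParallel} (which yields $\calT^{(2k)}$). The inductive step will follow the recurrence
$T(k,n) \leq 2 + \max_i T(k_i, n_i)$,
where the pieces indexed by $i$ arise from cutting $P$ along a single disc. The maximum (rather than a sum) appears because a single barycentric subdivision of $P$ refines the triangulations of all cut pieces simultaneously. The aim is to choose the cutting curve so that $\max_i (k_i + n_i) \leq k + n - 1$ in each piece; iterating this gives the desired $\calT^{(2k+2n-2)}$.

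To execute one inductive step, I would pick a curve $C' \in C$ and form its bounding disc $D'$ as simplicial in $\calT^{(2)}$ via the regular-neighborhood construction used in the proof of \reflem{SimplicialDiscsInPolyhedronAllParallel}: realise a collar $\bdy P \times [0,1]$ of $\bdy P$ as a simplicial subset of $\calT^{(2)}$, form the vertical annulus $C' \times [0,1]$ as simplicial, and cap off with a disc on the inner boundary $\bdy P \times \{1\}$. Then cut $P$ along $D'$, let each resulting piece inherit its straight triangulation from $\calT^{(2)}$, and apply the inductive hypothesis to each piece.

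The crux of the argument is selecting $C'$ to guarantee $\max_i(k_i + n_i) \leq k + n -1$. There are two cases which together cover all possibilities. First, if some parallelism class is a singleton, I would pick its unique curve as $C'$: then each cut piece has one fewer class (so $n_i \leq n-1$), while the curve count is only reduced, so $k_i \leq k$. Second, if every parallelism class contains at least two curves, I would instead pick $C'$ to be a ``median'' curve within some class, chosen so that each cut piece contains at most $2^{k-1}$ curves of $C - \{C'\}$ (so $k_i \leq k-1$), while the class count is unchanged (so $n_i \leq n$).

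The main obstacle is producing the median curve in the second case: we need a curve whose cut balances the total curve count across both pieces, not merely balances one class. This requires exploiting the combinatorial structure of disjoint simple closed curves on the surface $\bdy P$. Two non-parallel disjoint simple closed curves on a surface are nested in a forest-like way, and parallel curves within a class are linearly arranged in an annular neighborhood; combining these observations, one runs a centroid-style argument on the nesting structure of parallelism classes to locate a class $\mathcal{C}_0$ and an interior curve of $\mathcal{C}_0$ that halves the global curve count. Verifying this existence, and checking that cutting along $D'$ does not split any other parallelism class (so that $n_i \leq n$ really holds), will be the main technical work.
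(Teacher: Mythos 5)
Your overall framework --- inducting on $n$, with base case \reflem{SimplicialDiscsInPolyhedronAllParallel}, cutting $P$ along a single simplicial disc (costing two subdivisions via the collar construction), and closing the recurrence by ensuring $k_i + n_i \le k + n - 1$ in each cut piece --- is the same as the paper's. The divergence is in how the cutting curve $C'$ is chosen, and that is where there is a genuine gap.

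The paper always decreases $n$, never $k$. It picks a parallelism class $\mathcal{C}_0$ and takes $C'$ to be the component of $\mathcal{C}_0$ that is ``closest to other types of curves'': in the dual tree of $\bdy P - C$, the class $\mathcal{C}_0$ is a chain through degree-$2$ vertices, and $C'$ is the edge of this chain abutting an endpoint of degree at least $3$ (some such endpoint exists whenever $n \ge 2$). Cutting along the corresponding disc gives two pieces, each with at most $n-1$ classes (and at most $2^k$ curves), so induction on $n$ alone gives the count $2k + 2(n-1) - 2 + 2 = 2k + 2n - 2$. Establishing the ``at most $n-1$'' claim uses the fact you flagged (cutting cannot split a class of $C - C'$), together with the observation that the piece not containing $\mathcal{C}_0 - C'$ already contains at least two other classes, because it contains the degree-$\ge 3$ endpoint.

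Your second case --- when every class has at least two curves, pick a ``median'' curve that halves the global curve count --- fails, because such a curve need not exist. Take $\bdy P$ to be a sphere containing $d$ disjoint discs, each holding a nested pair of circles of $C$; the dual tree is a vertex of degree $d$ with $d$ chains of length two attached. Then every class has exactly two curves (so Case~1 does not apply, and no class has an interior curve). For $d = 4$ one has $|C| = 8 = 2^3$, so $k = 3$ and $2^{k-1} = 4$, yet removing any one curve of $C$ leaves a piece with at least $6 > 4$ curves: no curve halves the count. In this example the paper's choice still works --- cutting the curve of one chain that abuts the central region yields pieces with $1$ and $d - 1 = n - 1$ classes --- because the invariant is maintained by $n$ dropping, not $k$. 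The missing idea is to show that $n$ drops in every step (which the ``outermost curve toward other types'' achieves uniformly), rather than splitting into a singleton case and a balancing case, the latter of which cannot always be executed.
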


\begin{proof}
We prove this by induction on $n$.
The induction starts with $n = 1$, which is the content of \reflem{SimplicialDiscsInPolyhedronAllParallel}.
To prove the inductive step, suppose that $n \geq 2$.
Pick a curve type of $C$ and let $C'$ be a component of $C$ of this type that is closest to other types of curves.
As in the proof of \reflem{SimplicialDiscsInPolyhedronAllParallel}, we may find a properly embedded disc bounded by $C'$ that is simplicial in $\calT^{(2)}$.
Cutting $P$ along this disc gives two polyhedra, each of which inherits a triangulation. In each of these polyhedra, the maximal number of non-parallel components of $C - C'$ is at most $n-1$. Thus, by induction, after barycentrically subdividing the triangulations of these polyhedra $(2k+2n-4)$ times, we obtain simplicial discs bounded by these curves. Hence, $\calT^{(2k+2n-2)}$ contains simplicial discs bounded by $C$.
\end{proof}

\begin{lemma}
\label{Lem:PushArcsIntoInterior}
Let $\calT$ be a triangulation of a polyhedron $P$. Let $A$ be a collection of disjoint simplicial arcs in $\bdy P$. Let $A'$ be the
properly embedded arcs obtained by pushing the interior of $A$ into the interior of $P$. Then, after an ambient isotopy supported in the interior of $P$,
$A'$ can be realised as simplicial in $\calT^{(2)}$.
\end{lemma}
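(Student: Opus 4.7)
The plan is to construct, for each arc $a$ of $A$, a simplicial arc in $\calT^{(2)}$ with the same endpoints as $a$ and interior in the interior of $P$, and then to arrange these arcs to be pairwise disjoint and isotopic rel boundary to the push-off $A'$.

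For each edge $e = [u, u']$ of an arc in $A$, I first choose a tetrahedron $\Delta_e$ of $\calT$ containing $e$; its barycenter $b_{\Delta_e}$ is a vertex of $\calT^{(1)}$ lying in the interior of $P$. The flags $u \subsetneq e \subsetneq \Delta_e$ and $u' \subsetneq e \subsetneq \Delta_e$ yield 2-simplices $\sigma^- = \{u, b_e, b_{\Delta_e}\}$ and $\sigma^+ = \{b_e, u', b_{\Delta_e}\}$ of $\calT^{(1)}$ sharing the edge $\tau = \{b_e, b_{\Delta_e}\}$. I then replace $e$ by the four-edge simplicial path
\[
u \to b_{\sigma^-} \to b_\tau \to b_{\sigma^+} \to u'
\]
in $\calT^{(2)}$; each consecutive pair of vertices forms a flag in $\calT^{(1)}$, so each step is an edge of $\calT^{(2)}$. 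The three intermediate vertices are convex combinations in which $b_{\Delta_e}$ appears with positive weight, so they all lie in the interior of $P$, and the whole path stays inside $\Delta_e$.

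Concatenating these edge-replacement paths along an arc $a$ yields a simplicial path in $\calT^{(2)}$ that still visits each intermediate vertex $v$ of $a$, which lies on $\bdy P$. I bypass each such $v$ by splicing in a simplicial path in $\calT^{(2)}$ between the adjacent edge-replacement vertices through the interior of $P$: when the two tetrahedra chosen for the adjacent edges coincide, the bypass is simply two edges through the common $b_\Delta$ (which is a vertex of both $\sigma^+$ and the subsequent $\sigma^-$); otherwise the bypass uses a simplicial path in the link of $v$ in $\calT^{(1)}$ between the two $b_\Delta$'s, which exists because the interior of the simplicial 2-disk $L_v^{(1)}$ (the link of $v$ in $\calT^{(1)}$) is connected, and the relevant barycenters lie in that interior.

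The main technical point is disjointness across the arcs of $A$. Distinct edges of $A$ yield distinct $b_e$'s and hence distinct edge-replacement vertices $b_{\sigma^\pm}$ and $b_\tau$, so the interior portions of the four-edge paths are automatically disjoint. The bypass at each intermediate vertex is supported in the star of that vertex in $\calT$, and since distinct arcs of $A$ share no vertices, these stars and their bypasses are disjoint across arcs. The remaining subtlety is that two bypasses at distinct vertices $v, v'$ might both want to use the same barycenter $b_\Delta$ for a tetrahedron containing both $v$ and $v'$; this is precisely where the passage to $\calT^{(2)}$ is needed, since near each such $b_\Delta$ the second barycentric subdivision provides distinct adjacent interior vertices that can serve as local replacements, allowing the bypasses to be rerouted to avoid one another. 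This bookkeeping is the main obstacle, but the flexibility of $\calT^{(2)}$ makes it always achievable.
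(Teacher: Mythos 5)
Your proposal takes a genuinely different route from the paper. The paper observes that the simplicial neighbourhood of $\bdy P$ in $\calT^{(2)}$ is a regular neighbourhood, hence a collar $\bdy P \times I$, and then simply takes the simplicial "vertical" discs $A \times I$ and sets $A' = \bdy(A\times I) \cut A$. This is global, short, and delegates all the combinatorics to the standard PL fact that second derived neighbourhoods of subcomplexes are regular neighbourhoods. Your proposal instead builds the push-off arc by hand, edge by edge, via explicit barycenter chains.

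The first part of your construction is fine: for each edge $e=[u,u']$ of $A$ the four-edge path $u \to b_{\sigma^-} \to b_\tau \to b_{\sigma^+} \to u'$ is a legitimate simplicial path in $\calT^{(2)}$ whose interior vertices lie in $\interior P$ (since each is a convex combination with positive weight on $b_{\Delta_e}\in\interior\Delta_e$), and the disjointness of these replacement paths across distinct edges follows from the fact that their interior vertex sets involve the distinct barycenters $b_e$.

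However, there is a genuine gap in the bypass step, and you acknowledge it yourself. Two problems: first, the general-case bypass is asserted to be "a simplicial path in the link of $v$ in $\calT^{(1)}$" between the interior barycenters $b_{\Delta_1}$, $b_{\Delta_2}$, but this needs the interior \emph{one-skeleton} of that link disk to be connected (connectivity of the open disk as a topological space does not automatically give a simplicial interior path), and this is asserted rather than proved. Second, and more seriously, you flag that bypasses at distinct boundary vertices $v,v'$ can collide — for instance through a shared barycenter $b_\Delta$ of a tetrahedron containing both — and resolve this with the sentence that "the flexibility of $\calT^{(2)}$ makes it always achievable." That is not an argument; in fact your bypass, as written, is a path in $\calT^{(1)}$ (the link of $v$ in $\calT^{(1)}$ is a subcomplex of $\calT^{(1)}$), so the extra subdivision is not obviously being used here at all. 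A clean way to fix the approach would be to insist that the bypass at $v$ only uses vertices $b_\sigma$ of $\calT^{(2)}$ where $\sigma$ is a flag of $\calT$ whose minimal element is $\{v\}$ (with some non-boundary element), and then argue connectivity via the dual graph of the triangulated disk $\operatorname{lk}(v,\calT)$; this automatically makes bypasses at distinct $v$'s disjoint, since a flag has a unique minimal simplex. But as written your proof does not establish this, and so the disjointness claim is unsupported. By contrast the paper's collar argument avoids all of this bookkeeping in one stroke.
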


\begin{proof}
\reffig{PushArcsIntoInterior} gives a construction of $A'$.
A regular neighbourhood $N$ of $\bdy P$ is simplicial in $\calT^{(2)}$.
This is homeomorphic to $\bdy P \times I$.
Incident to the arcs $A$ are simplicial discs of the form $A \times I$ in $N$.
We then set $A' = \bdy(A \times I) \cut A$.
\end{proof}

\begin{figure}
  \includegraphics[width=4in]{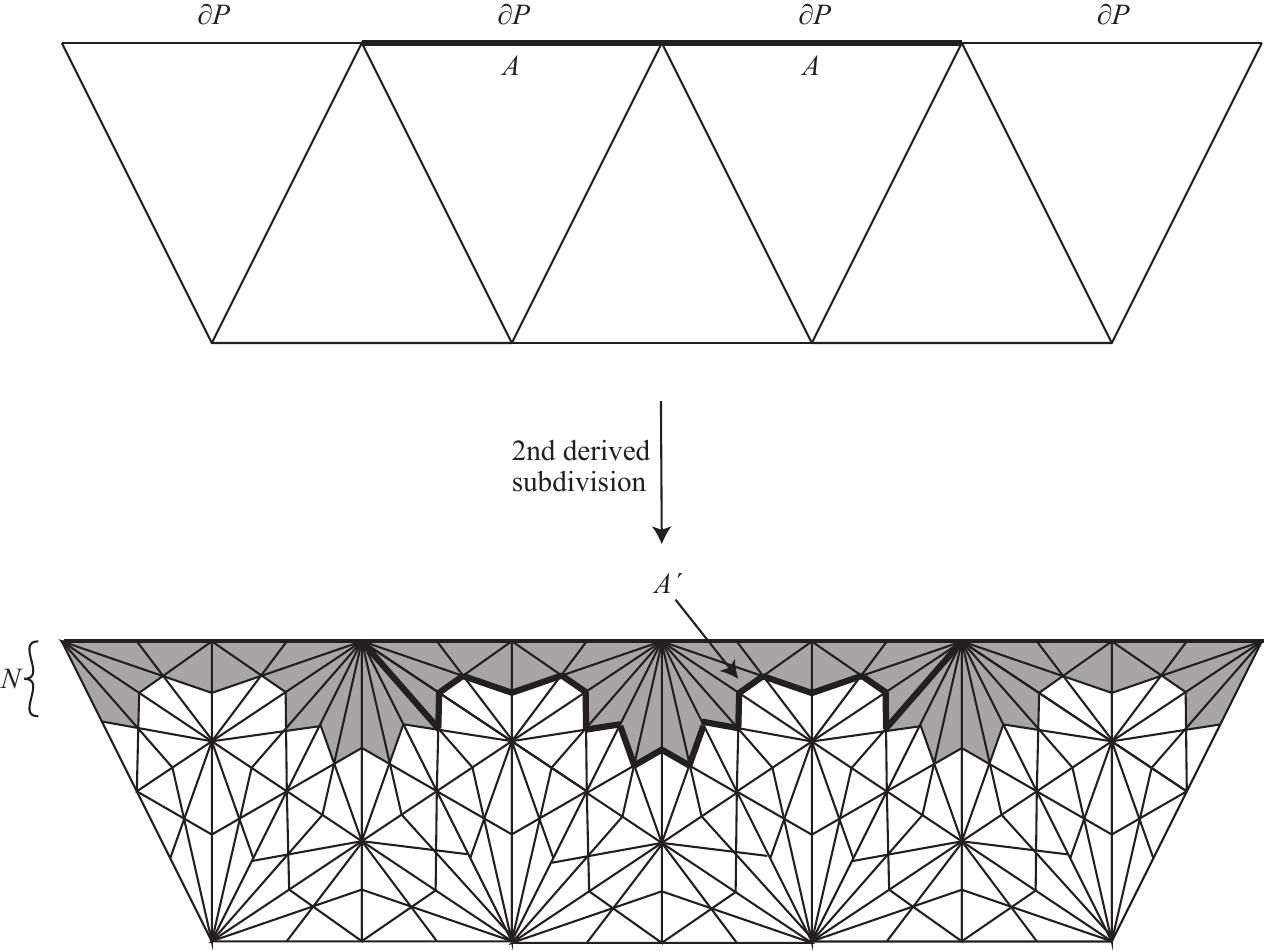}
  \caption{Pushing an arc into the interior}
  \label{Fig:PushArcsIntoInterior}
\end{figure}

We now turn to the proof.

\begin{restate}{Theorem}{Thm:DerivedSolidTorus}
Let $\calT$ be a triangulation of the solid torus $M$.
Then $M$ contains a core curve that is a subcomplex of $\calT^{(51)}$.
\end{restate}

\begin{proof}
We start with the triangulation $\calT$ of the solid torus $M$.
Let $\calH$ be its dual handle structure.
We give it its canonical affine structure, as in \refdef{CanonicalAffine}.
Each zero-handle is a truncated octahedron, which may be realised as a simplicial subset of the 2nd derived subdivision of the tetrahedron of $\calT$ that contains it.
Each one-handle of $\calH$ is realised as a product of a hexagon and an interval.
We collapse this vertically onto its co-core, which is a hexagonal face of the two incident truncated octahedra.

We now apply \refthm{CoreSolidTorusAffine}, which provides a core curve $C$. The intersection between $C$ and each hexagonal face is a collection of at most $24$ points. These may be made simplicial after $6$ barycentric subdivisions, by \reflem{PointsBecomeVertices}. Within each zero-handle, $C$ is a union of at most $48$ arcs, and these are simultaneously parallel to a collection of arcs $A$ in the boundary of the truncated octahedron.
We now make $A$ simplicial. The intersection between $A$ and each face of the truncated octahedron is at most $6 \times 48 = 288$ straight arcs. The ones that start and end on the boundary of the face come in at most $9$ arc types and these  can be made simplicial using at most $17$ barycentric subdivisions, by \reflem{SeveralArcTypes}. There are at most $24$ arcs that have at least one endpoint not on the boundary of the face. We make these simplicial using at most $24$ barycentric subdivisions using \reflem{ImproperArcsSimplicial}.  Finally, we can push these arcs in the boundary of the truncated octahedron into the interior and make them simplicial, using at most $2$ barycentric subdivisions, by \reflem{PushArcsIntoInterior}. In total, we have used at most $2 + 6 + 17 + 24 + 2 = 51$ subdivisions.
\end{proof}

\section{Nicely embedded handle structures}
\label{Sec:NicelyEmbedded}

\begin{definition}
Let $\calT$ be a triangulation of a compact three-manifold $M$, and let $\calH$ be the dual handle structure.
A subset of a zero-handle or one-handle $H$ of $\calH$ is \emph{subnormal} if it is obtained from $H$ by cutting along a collection of disjoint normal discs and then taking some of the resulting components.
\end{definition}

The proof of \refthm{LensSpaceCurve} (in the case where the lens space $M$ is not a prism manifold) proceeds by finding, within $M$, one of the solid tori $V$ in its Heegaard splitting embedded in a nice way.
More specifically, it has a handle structure where the union of the zero-handles and the one-handles is embedded in $M$ in the following way.

\begin{definition}
\label{Def:NicelyEmbeddedHandlebody}
Let $M'$ be a handlebody embedded in a three-manifold $M$.
Let $A$ be a union of disjoint annuli in $\bdy M'$.
Let $\calH'$ and $\calH$ be handle structures for $M'$ and $M$.
We say that $(\calH', A)$ is \emph{nicely embedded} in $\calH$ if the following hold:
\begin{itemize}
\item
$\calH'$ has only zero-handles and one-handles;
\item
each zero-handle of $\calH'$ is a subnormal subset of a zero-handle of $\calH$;
\item
each one-handle of $\calH'$ is a subnormal subset of a one-handle of $\calH$ and has the same product structure;
\item
the intersection between the annuli $A$ and any handle $H'$ of $\calH'$ is a union of components of intersection between $H'$ and handles of $\calH$. \qedhere
\end{itemize}
\end{definition}

\begin{definition}
\label{Def:KLNicelyEmbeddedHandlebody}
With notation as in the previous definition,
we say that $(\calH',A)$ is $(k, \ell)$--\emph{nicely embedded} in $\calH$, for non-negative integers $k$ and $\ell$,
if it is nicely embedded and, in addition, the following hold:
\begin{itemize}
\item
in any zero-handle of $\calH$, at most $k$ zero-handles of $\calH'$ lie between parallel normal discs in $\calH$;
\item
in any one-handle of $\calH$, at most $\ell$ one-handles of $\calH'$ lie between parallel normal discs in $\calH$. \qedhere
\end{itemize}
\end{definition}

\begin{definition}
\label{Def:NicelyEmbeddedManifold}
Let $M''$ be a three-manifold embedded in another three-manifold $M$.
Let $\calH''$ and $\calH$ be handle structures for $M''$ and $M$.
Let $\calH'$ be the handle structure just consisting of the zero-handles
and one-handles of $\calH''$, and let $A$ be the attaching annuli of the two-handles.
We say that $\calH''$ is $(k,\ell)$--\emph{nicely embedded} in $\calH$, for non-negative integers $k$ and $\ell$, if $(\calH',A)$ is $(k,\ell)$--nicely embedded in $\calH$.
\end{definition}

These definitions are designed to capture the essential properties of the handle structure that $M \cut S$ inherits when $S$ is a normal surface.
More specifically, we have the following.

\begin{lemma}
\label{Lem:NicelyEmbedded}
Suppose that $M$ is a compact three-manifold with triangulation $\calT$.
Suppose that $S$ is a normal surface, properly embedded in $M$.
Let $\calH$ be the handle structure dual to $\calT$ and $\calH'$ be the handle structure that $M \cut S$ inherits, but with the two-handles removed.
Let $A$ be their attaching annuli.
Then $(\calH', A)$ is nicely embedded in $\calH$. \qed
\end{lemma}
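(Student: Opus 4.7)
The plan is to verify each of the four clauses of Definition~\ref{Def:NicelyEmbeddedHandlebody} directly. Clause (1) is immediate because, by construction, $\calH'$ is obtained from the handle structure on $M \cut S$ by deleting the two-handles (and three-handles), so only zero- and one-handles remain.

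For clauses (2) and (3), I would unpack what it means for $S$ to be normal with respect to $\calH$. Since $S$ is normal in $\calT$, after passing to the dual handle structure $\calH$, its intersection with each zero-handle $H_0$ is a disjoint union of normal discs, and its intersection with each one-handle $H_1 = D^1 \times D^2$ has the product form $D^1 \times A$ for a disjoint union of arcs $A$ in $D^2$ that are themselves normal (in particular, the arcs form the intersection of $S$ with the corresponding dual face). The inherited handle structure on $M \cut S$ is built tautologically: each zero-handle is a component of $H_0$ cut along the normal discs $S \cap H_0$, and each one-handle is a component $D^1 \times C$ of $H_1$ cut along $D^1 \times A$, where $C$ is a component of $D^2 \setminus A$. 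The first is by definition a subnormal subset of $H_0$, verifying clause (2); the second is a subnormal subset of $H_1$ that retains the product structure on $H_1$, verifying clause (3).

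For clause (4), I would look at how the two-handles of $M \cut S$ (the ones removed from $\calH'$) attach. Each two-handle of $\calH$ has the form $D^2 \times D^1$ and meets $S$ in $D^2 \times P$ for some finite $P \subset \operatorname{int}(D^1)$; cutting produces sub-two-handles $D^2 \times J$ for subintervals $J$ of $D^1 \setminus P$. The attaching annulus in $A$ for such a sub-two-handle is $\bdy D^2 \times J$. This annulus is, by the standardness of $S$, composed exactly of the relevant components of $H' \cap H$, where $H'$ is an adjacent zero- or one-handle of $\calH'$ and $H$ is the two-handle of $\calH$. Since the normal-disc cuts do not subdivide those intersection components further, $A \cap H'$ is a union of components of $H' \cap H$, which is clause (4).

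There is no real obstacle here: the lemma is essentially a bookkeeping statement about how a normal surface interacts with the dual handle structure, and the authors signal this with the in-line \qedsymbol. The only care needed is the last clause, where one must check that the cuts performed to define $\calH'$ do not split the attaching loci of the deleted two-handles into pieces smaller than the combinatorial components of intersection between handles of $\calH'$ and handles of $\calH$; this follows from the fact that the attaching locus of a sub-two-handle $D^2 \times J$ is a product over $J$, and the cutting discs in the adjacent zero- and one-handles meet this product only in full sub-rectangles determined by the original face structure of $\calH$.
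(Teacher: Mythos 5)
Your proof is correct and supplies exactly the bookkeeping the paper leaves implicit (the lemma is stated with an inline \qed and no written proof). The verification of the four clauses of \refdef{NicelyEmbeddedHandlebody} directly from the definitions of normal/standard surfaces and subnormal subsets is precisely what is intended, and your observation that clause~(4) hinges on the normal discs meeting each rectangle $H_0 \cap (D^2 \times D^1)$ in at most one product-respecting arc — so that cutting subdivides the rectangles only into full strips $\alpha \times J$ — is the right place to focus.
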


We typically collapse the one-handles of $\calH'$ vertically onto their co-cores.
Thus, the underlying manifold of $\calH'$ becomes a collection of balls, which are just its zero-handles, glued along discs in their boundary.

Once we have such a $(k, \ell)$--nice embedding, we get the following results.

\begin{theorem}
\label{Thm:NiceEmbeddedManifold}
Let $M$ be a compact three-manifold with a triangulation $\calT$.
Let $M'$ be a handlebody with a handle structure $\calH'$, and let $A$ be a union of disjoint annuli in $\bdy M'$.
Suppose that $M'$ is embedded in $M$ in such a way that $(\calH', A)$ is $(k, \ell)$--nicely embedded in the dual of $\calT$.
Then we can arrange that the following are all simplicial subsets of $\calT^{(m)}$:
\begin{itemize}
\item each zero-handle of $M'$;
\item each one-handle of $M'$, vertically collapsed onto its co-core;
\item the annuli $A$;
\end{itemize}
where $m = 17 + 2 \lceil \log_2 (2k+10) \rceil + \lceil \log_2(6+2\ell) \rceil + \lceil \log_2(4 + 2\ell) \rceil$.
\end{theorem}

\begin{theorem}
\label{Thm:NiceEmbeddedSolidTorus}
Let $M$ be a compact three-manifold with a triangulation $\calT$.
Let $V$ be a solid torus with a handle structure $\calH'$.
Suppose that $V$ is embedded in $M$ in such a way that $\calH'$ is $(k, \ell)$--nicely embedded in the dual of $\calT$.
Then there is a core curve of $V$ that is a subcomplex of $\calT^{(m+49)}$, where $m$ is as in \refthm{NiceEmbeddedManifold}.
\end{theorem}

\begin{proof}[Proof of Theorems~\ref{Thm:NiceEmbeddedManifold} and~\ref{Thm:NiceEmbeddedSolidTorus}]
We start with the triangulation $\calT$ of $M$.
Let $\calH$ be its dual handle structure.
We give it its canonical affine structure, as in the proof of \refthm{DerivedSolidTorus}.
Each zero-handle is a truncated octahedron, which may be realised as a simplicial subset of the second derived subdivision of the tetrahedron of $\calT$ that contains it.
Each one-handle is the product of a hexagon and an interval, but it is collapsed onto its hexagonal co-core.

Our goal is to construct an affine handle structure on $\calH'$.
Thus, each zero-handle of $\calH'$ is given the structure of a euclidean polyhedron.
We realise this as a polyhedron in the truncated octahedron of $\calH'$ that contains it and as a simplicial subset of a suitable iterated barycentric subdivision of $\calT$.

Now, within each zero-handle $H_0$ of $\calH$, every zero-handle of $\calH'$ is subnormal.
These subnormal zero-handles are obtained from the truncated octahedron $H_0$ by cutting along normal triangles and squares.
These are arranged into at most $4$ triangle types and at most one square type.
Our approach, in overview, is to arrange for the intersections between these normal discs and the one-handles of $\calH$ to be simplicial, then for the remainder of the boundary of these discs to be simplicial and then for the normal discs themselves to be simplicial.

Let us first focus on a one-handle of $\calH$, which is a product of a hexagon $X$ and an interval $[-1,1]$.
Within this one-handle, there are various one-handles of $\calH'$.
Only four possible one-handles of $\calH'$ in $X \times [-1,1]$ do not lie between parallel normal discs.
By assumption, at most $\ell$ one-handles do lie between parallel normal discs.
These one-handles are therefore obtained from $X \times [-1,1]$ by cutting along at most $6 + 2\ell$ normal discs and then possibly throwing away some components.
The union of these discs is of the form $\beta \times [-1,1]$ for normal arcs $\beta$ in $X$.
These arcs come in at most $3$ types.
Hence, by \reflem{SeveralArcTypes}, after at most $2 + \lceil \log_2(6+2\ell) \rceil$ barycentric subdivisions, we make $\beta$ simplicial in the triangulation of $X$.

Now consider each zero-handle $H_0$ of $\calH$.
We cut this handle along normal discs and then take some of the resulting components to get the subnormal zero-handles of $\calH'$ in $H_0$.
We are going to realise the boundary of these normal discs as simplicial in a suitable subdivision of the triangulation.
We have already arranged for their intersection with the one-handles to be simplicial.
Their intersection with each two-handle consists of at most $4 + 2\ell$ arcs, all of the same arc type, and so by using $1 + \lceil \log_2(4 + 2\ell) \rceil$ further barycentric subdivisions, we can also arrange for these arcs to be simplicial, by \reflem{TwoArcTypes}.

The number of normal discs that we need to consider within $H_0$ is at most $2k + 10$.
We now make these simplicial, using \reflem{SimplicialDiscsInPolyhedron}.
This requires at most $2 \lceil \log_2 (2k + 10)\rceil  + 8$ barycentric subdivisions.

Thus, each zero-handle of $\calH'$ is now a simplicial subset of $\calT^{(m)}$, where $m$ is as given in the statement of the theorem.
 Furthermore, when two zero-handles of $\calH'$ are joined by a one-handle, then we glue the simplicial subsets of $\calT^{(m)}$ corresponding to these zero-handles along some faces. These can be arranged to be flat euclidean convex polygons with at most $6$ sides. Thus, we have realised each one-handle, when vertically collapsed onto its co-core, as a simplicial subset of $\calT^{(m)}$. Also, the components of intersection between the two-handles and the zero-handles and between the two-handles and the collapsed one-handles are simplicial. Thus, we have proved \refthm{NiceEmbeddedManifold}.

Let us now prove \refthm{NiceEmbeddedSolidTorus}.
So $V$ is now a solid torus.
Each zero-handle and one-handle of $\calH'$ has the structure of a euclidean polyhedron, as described above.
This gives $\calH'$ an affine handle structure.
We can therefore apply \refthm{CoreSolidTorusAffine}, which gives a core curve $C$ of the solid torus $V$.

The intersection between $C$ and any one-handle of $\calH'$ is at most 24 arcs, which respect the product structure on the handle. When the one-handle is vertically collapsed onto its co-core, these arcs become points. Using \reflem{PointsBecomeVertices}, we can make these vertices in the triangulation of the co-core of the one-handle after $6$ barycentric subdivisions.

The intersection between $C$ and each zero-handle of $\calH'$ is a trivial tangle. Moreover, we have control over the arcs in the boundary of the handle to which it is parallel. Specifically, these arcs are a union of straight arcs in each face of the handle. These arcs come in two types: those that start and end on the boundary of the face, and those that have at least one endpoint in $C$. In each face (that arises as a component of intersection with the one-handles), there are at most $24$ of the latter type of arc. Hence, we need at most $24$ barycentric subdivisions to make these simplicial, by \reflem{ImproperArcsSimplicial}. There are at most $48 \times 6 = 288$ arcs in each face that start and end on the boundary of the face and these come in at most $9$ types. At most $17$ subdivisions are required to make these simplicial, by \reflem{SeveralArcTypes}.

Now in each zero-handle, $C$ runs parallel to these arcs, which have been made simplicial. Hence, using \reflem{PushArcsIntoInterior}, two further subdivisions are required to make $C$ simplicial.

The total number of barycentric subdivisions we have performed is at most $m + 49$.
\end{proof}

\section{Parallelity bundles}
\label{Sec:Parallelity}

In this section, we recall some material from~\cite[Section~5]{Lackenby:Composite} about parallelity bundles for handle structures.
Here we assume that $M$ is a compact orientable three-manifold.
We further assume that $\calH$ is a handle structure for $M$.

\begin{definition}
Suppose that $\gamma$ is a simple closed curve, properly embedded in $\bdy M$.
We say that $\gamma$ is \emph{standard} with respect to $\calH$ if it satisfies the following properties.
\begin{itemize}
\item
The curve $\gamma$ is disjoint from the two-handles of $\calH$,
\item
for each one-handle $H = D^1 \times D^2$ there is a finite set $P \subset \bdy D^2$ so that $\gamma \cap H = D^1 \times P$, and
\item
$\gamma$ meets at least one one-handle. \qedhere
\end{itemize}
\end{definition}

\begin{definition}
Suppose that $S \subset \bdy M$ is a subsurface.
We say that $\calH$ is a \emph{handle structure for the pair} $(M,S)$ if
\begin{itemize}
\item
$\calH$ is a handle structure for $M$ and
\item
the boundary of $S$ in $\bdy M$ is a union of standard curves for $\calH$.
\qedhere
\end{itemize}
\end{definition}

\begin{definition}
Suppose that $\calH$ is a handle structure for the pair $(M, S)$.
Suppose that $H$ is a zero-, one-, or two-handle of $\calH$.
We say that $H$ is a \emph{parallelity handle} if it admits a product structure $D^2 \times I$ so that
\begin{itemize}
\item
$H \cap S = D^2 \times \bdy I$ and
\item
for any other handle $H'$ of $\calH$ every component of $H \cap H'$ is \emph{vertical} in $H$: that is, of the form $\beta \times I$ where $\beta$ is an arc in $\bdy D^2$.
\qedhere
\end{itemize}
\end{definition}

Following the above definition we typically regard parallelity handles as $I$--bundles over $D^2$, their first coordinate.

\begin{definition}
The union of the parallelity handles in $\calH$ is the \emph{parallelity bundle} for $\calH$.
\end{definition}

By \cite[Lemma~5.3]{Lackenby:Composite} the $I$--bundle structures on the parallelity handles agree where they intersect and so give an $I$--bundle structure on the parallelity bundle.

\begin{definition}
Suppose that $\calB$ is an $I$--bundle over a surface $F$.
The resulting $(\bdy I)$--bundle over $F$ is $\bdy_h \calB$, the \emph{horizontal boundary} of $\calB$.
The resulting $I$--bundle over $\bdy F$ is $\bdy_v \calB$, the \emph{vertical boundary} of $\calB$.
The components of the boundary of $\bdy_h \calB$ (which equals the boundary of $\bdy_v \calB$) are called the \emph{corner curves} of $\calB$.
\end{definition}

\begin{definition}
\label{Def:GeneralisedParallelityBundle}
Suppose that $\calH$ is a handle structure for the pair $(M,S)$.
Suppose that $\calB^+$ is a three-dimensional submanifold of $M$.
We say that $\calB^+$ is a \emph{generalised parallelity bundle} if
\begin{itemize}
\item
$\calB^+$ is an $I$--bundle over a compact surface;
\item
the horizontal boundary of $\calB^+$ is $\calB^+ \cap S$;
\item
$\calB^+$ is a union of handles of $\calH$;
\item
any handle in $\calB^+$ that intersects the vertical boundary of $\calB^+$ is a parallelity handle, where the $I$--bundle structure on the parallelity handle agrees with the $I$--bundle structure of $\calB^+$;
\item
for any $i$--handle lying in $\calB^+$ that is incident to $j$--handle, where $j > i$, the $j$--handle must also lie in $\calB^+$.
\qedhere
\end{itemize}
\end{definition}

Note that the parallelity bundle $\calB$ is itself a generalised parallelity bundle.

\begin{definition}
We say that a generalised parallelity bundle $\calB^+$ is \emph{maximal} if $\calB^+$ is not properly contained in another generalised parallelity bundle.
\end{definition}

Note that there is always a maximal generalised parallelity bundle $\calB^+$ that contains all of $\calB$.
However the inclusion of $\calB$ into $\calB^+$ need not respect the $I$--bundle structure on all components of $\calB$.

\begin{lemma}
\label{Lem:BundleBoundaryCurves}
Let $\calB$ be the parallelity bundle and let $\calB^+$ be any maximal generalised parallelity bundle that contains $\calB$.
Then every corner curve of $\calB^+$ is a corner curve of $\calB$.
\end{lemma}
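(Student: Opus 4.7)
The plan is to follow a corner curve $c$ of $\calB^+$ through the handles it meets, verify that every such handle already lies in the small parallelity bundle $\calB$, and then conclude by a one-dimensional argument that $c$ is in fact an entire component of $\partial(\partial_h \calB)$.

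First I would fix a corner curve $c$ of $\calB^+$. By the definition of corner curve, $c \subset \partial_v \calB^+ \cap \partial_h \calB^+$, so in $S$ we have $\calB^+ \cap S = \partial_h \calB^+$ lying on the ``inside'' of $c$, while the ``outside'' of $c$ is disjoint from $\calB^+$. Now let $H$ be any handle of $\calH$ contained in $\calB^+$ and meeting $c$ on the inside. Because $c$ lies in $\partial_v \calB^+$, the fourth bullet of \refdef{GeneralisedParallelityBundle} forces $H$ to be a parallelity handle whose parallelity $I$-bundle structure coincides with the restriction of the $\calB^+$-bundle structure. In particular $H \subset \calB$, and $c \cap H$ lies in $\partial_h H \subset \partial_h \calB$. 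Taking the union over all such $H$ gives $c \subset \partial_h \calB$.

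Next I would verify that $c \subset \partial(\partial_h \calB)$. Suppose, toward a contradiction, that some point $p \in c$ lies in the interior of $\partial_h \calB$ as a subsurface of $S$. Then a two-sided neighbourhood of $p$ in $S$ is contained in $\partial_h \calB$, hence in $\partial_h \calB^+ = \calB^+ \cap S$ because $\calB \subset \calB^+$. But this puts the outside of $c$ into $\calB^+$, contradicting the assumption that $c$ is a corner curve of $\calB^+$. Thus $c \subset \partial(\partial_h \calB)$. Since $\partial(\partial_h \calB)$ is a compact one-manifold and $c$ is a simple closed curve embedded in it, $c$ must be an entire component, that is, a corner curve of $\calB$.

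The only subtlety is the compatibility of the $I$-bundle structures: the $\calB^+$-bundle structure is only guaranteed to agree with the parallelity structure on handles that touch $\partial_v \calB^+$. Fortunately, $c$ lies entirely in $\partial_v \calB^+$ by hypothesis, so the matching clause of \refdef{GeneralisedParallelityBundle} applies to every handle of $\calB^+$ incident to $c$, and the argument goes through without further obstacle. I note in passing that the maximality of $\calB^+$ plays no role in this direction of the inclusion; it presumably enters only when one wants to exploit $\calB^+$ as an enlargement of $\calB$ in the main application.
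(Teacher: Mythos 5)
Your proof is correct and follows essentially the same route as the paper's: you invoke the fourth bullet of \refdef{GeneralisedParallelityBundle} to conclude that every handle of $\calB^+$ meeting the corner curve is a parallelity handle lying in $\calB$ with matching $I$-bundle structure, and then use $\calB \subset \calB^+$ to rule out the curve lying in the interior of $\bdy_h \calB$. The paper phrases this by showing the whole vertical annulus $A \subset \bdy_v\calB^+$ containing $\gamma$ is a component of $\bdy_v\calB$, whereas you track the curve itself inside $\bdy(\bdy_h\calB)$; these are only cosmetically different.
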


\begin{proof}
By hypothesis, $\calB^+$ contains $\calB$.
Hence, $\bdy_h \calB^+ = \calB^+ \cap S$ contains $\bdy_h \calB = \calB \cap S$.
Suppose that $\gamma$ is a corner curve of $\calB^+$.
So $\gamma$ is a component of $\bdy A$ for some component $A$ of $\bdy_v \calB^+$.
By definition, the handles of $\calB^+$ incident to $A$ are parallelity handles.
The $I$--bundle structures on $A$ and $\calB$ agree;
also any parallelity handle incident to $A$ lies in $\calB$.
Hence, $A$ is a component of $\bdy_v \calB$.
We deduce that $\gamma$ is a corner curve of $\calB$.
\end{proof}

\begin{definition}
Suppose that $G$ is an annulus, properly embedded in $M$, with boundary in $S$.
Suppose that $G'$ is an annulus in $\bdy M$ with $\bdy G = \bdy G'$.
Suppose also that $G \cup G'$ bounds a three-manifold $P$ such that
\begin{itemize}
\item
either $P$ is a parallelity region between $G$ and $G'$ or $P$ lies in a three-ball;
\item
$P$ is a non-empty union of handles;
\item
$\closure(M - P)$ inherits a handle structure from $\calH$;
\item
any parallelity handle of $\calH$ that intersects $P$ lies in $P$;
\item
$G$ is a vertical boundary component of a generalised parallelity bundle lying in $P$;
\item
$G' \cap (\bdy M - S)$ is either empty or a regular neighbourhood of a core curve of the annulus $G'$.
\end{itemize}
Removing the interiors of $P$ and $G'$ from $M$ is called an \emph{annular simplification}.
\end{definition}

The resulting three-manifold $M'$, obtained from an annular simplification, is homeomorphic the original manifold $M$.
This holds even in the case where $P$ is homeomorphic to the exterior of a non-trivial knot;
in this case $P$ lies in a three-ball in $M$.
We now restate~\cite[Proposition~5.6]{Lackenby:Composite}.

\begin{theorem}
\label{Thm:IncompressibleHorizontalBoundary}
Suppose that $M$ is a compact orientable irreducible three-manifold.
Suppose that $F$ is an incompressible subsurface of $\bdy M$.
Let $\calH$ be a handle structure for $(M, F)$.
Suppose that $\calH$ admits no annular simplification.
Let $\calB^+$ be any maximal generalised parallelity bundle in $\calH$.
Then the horizontal boundary of $\calB^+$ is incompressible.
\qed
\end{theorem}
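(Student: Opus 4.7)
I would argue by contradiction. Suppose $\bdy_h \calB^+$ is compressible, and let $D$ be a compression disc in $M$ with $\bdy D$ essential in $\bdy_h \calB^+$, chosen to minimise $|D \cap \bdy_v \calB^+|$ among all such discs. Since $F$ is incompressible and $\bdy D \subset F$ bounds $D$ in $M$, the curve $\bdy D$ bounds a disc $D' \subset F$; irreducibility of $M$ then supplies a three-ball $B$ with $\bdy B = D \cup D'$. Essentiality of $\bdy D$ in $\bdy_h \calB^+$ forces $D'$ to cross at least one corner curve of $\calB^+$, and by \reflem{BundleBoundaryCurves} any such corner curve is also a corner curve of the parallelity bundle $\calB$.

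Next I would reduce to the case $D \cap \bdy_v \calB^+ = \emptyset$ by standard innermost circle and outermost arc surgeries through the $I$--bundle structure on $\bdy_v \calB^+$; each such surgery strictly decreases the number of intersections, contradicting the minimal choice of $D$. A further pushing argument, using the fact that the base surface of an $I$--bundle injects into the bundle near its horizontal boundary, allows us to assume additionally that the interior of $D$ lies in $\closure(M - \calB^+)$.

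Finally I would extract an annular simplification from the ball $B$. Choose an innermost corner curve $\alpha$ of $\calB^+$ in $D'$, bounding a subdisc $D'' \subset D'$ whose interior avoids all other corner curves, and let $A_\alpha$ be the annulus of $\bdy_v \calB^+$ incident to $\alpha$ with other boundary $\alpha'$. Inside $B$ one then locates a subregion $P$ bounded by $A_\alpha$, $D''$, and a collaring annulus $G' \subset F$ joining $\alpha$ and $\alpha'$ through $D''$; the triple $(P, A_\alpha, G')$ realises the data of an annular simplification, contradicting the hypothesis. The main obstacle is verifying the full list of conditions for an annular simplification: that $P$ is a union of handles with $\closure(M - P)$ inheriting a handle structure, and in particular that every parallelity handle of $\calH$ meeting $P$ lies inside $P$. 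The last point should follow from the maximality of $\calB^+$, since a parallelity handle meeting $P$ but not absorbed by it would allow $\calB^+$ to be enlarged, a contradiction.
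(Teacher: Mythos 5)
The paper does not prove this theorem: it is a restatement of \cite[Proposition~5.6]{Lackenby:Composite}, and the $\qed$ at the end of the statement signals that the result is imported wholesale. So your proposal is a from-scratch reconstruction, and it does capture the broad outline (compress $\bdy_h\calB^+$, use incompressibility of $F$ to move the compression into $F$, use irreducibility to get a ball, look at innermost corner curves, manufacture an annular simplification). But there are a couple of genuine gaps.

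First, the description of the region $P$ is not consistent with the definition of an annular simplification. That definition requires $P$ to be bounded by $G \cup G'$, where $G$ is a properly embedded annulus with $\bdy G \subset S$ and $G'$ is an annulus in $\bdy M$ with $\bdy G' = \bdy G$. Your $P$ is ``bounded by $A_\alpha$, $D''$, and a collaring annulus $G'$'' --- three surfaces, one of which is a disc. That is the wrong shape. The correct picture takes $G = A_\alpha$ and $G'$ an annulus in $F$ cobounded by $\alpha$ and $\alpha'$ (both of which bound discs in $F$ by incompressibility, since $\alpha$ bounds $D''$ and $\alpha'$ is nullhomotopic via $A_\alpha \cup D''$). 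The ball $B$ and the disc $D''$ are used only to verify the condition that $P$ lies in a three-ball; they do not appear in $\bdy P$. This distinction matters because $\bdy P$ must consist of handle-face data, not arbitrary topologically constructed discs, for the next point.

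Second, the claim that ``$P$ is a union of handles with $\closure(M-P)$ inheriting a handle structure, and every parallelity handle meeting $P$ lies in $P$'' does not ``follow from maximality of $\calB^+$,'' and the suggested enlargement argument does not go anywhere: parallelity handles of $\calH$ already lie in $\calB \subset \calB^+$ by definition, so there is nothing to enlarge. The actual reason $P$ is a union of handles is structural: $A_\alpha$ is a vertical boundary component of $\calB^+$, hence is contained in faces of parallelity handles, so no handle is cut by $A_\alpha$; together with the fact that $G' \subset \bdy M$, this makes $P$ automatically handle-respecting. The condition that parallelity handles meeting $P$ lie in $P$ still requires a separate argument (roughly, that $A_\alpha$ separates the parallelity bundle components from one another), which you do not supply. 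Finally, your reduction to $D \cap \bdy_v\calB^+ = \emptyset$ by innermost-circle surgery has an unaddressed case: an innermost circle in $D$ that is \emph{essential} in its vertical annulus is a compression of that annulus, and one must handle this either by showing it cannot happen or by observing that it produces an annular simplification by a different route. As written, the proposal waves past all of these, so while the high-level strategy is in the right spirit, it is not a proof.
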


\section{Finding a simplicial core curve of a lens space}
\label{Sec:ProofMain}

This section is devoted to the proof of \refthm{LensSpaceCurve}:
that is, for any triangulation $\calT$ of any lens space $M$ (except $\RP^3$) there is a relatively short curve with complement a solid torus or the orientation $I$--bundle over the Klein bottle.

Let $\calH$ be the handle structure of $M$ that is dual to $\calT$.
Let $S$ be an almost normal Heegaard torus in $\calT$, which exists by \refthm{AlmostNormalHeegaard}.
Cutting $M$ along $S$ gives two solid tori $X_1$ and $X_2$.
As explained in \refsec{HandleStructures}, these inherit handle structures $\calH_1$ and $\calH_2$.

We now consider some particular situations where the proof of \refthm{LensSpaceCurve} is fairly straightforward.
They highlight the approach that needs to be taken in the general case.

Suppose, as a special case, that one of $\calH_1$ or $\calH_2$ is $(0,0)$--nicely embedded within $\calH$.
We could then use \refthm{NiceEmbeddedSolidTorus} to find a core curve of one of the solid tori $X_1$ or $X_2$ that is simplicial in a suitable iterated barycentric subdivision of $\calT$.
But in general, the embeddings of $\calH_1$ and $\calH_2$ in $\calH$ are not $(0,0)$--nicely embedded,
because when $S$ contains two normal discs of the same type, the space between them becomes a zero-handle of $\calH_1$ or $\calH_2$ that violates the definition of a $(0,0)$--nice embedding.

For $i = 1$ and $2$, let $\calB_i$ be the parallelity bundle for $\calH_i$.
Suppose, as a different special case, that this is an $I$--bundle over a collection of discs, for $i=1$ or $2$. Then we could create a new handle structure from $\calH_i$ by removing $\calB_i$ and replacing it by two-handles.
This new handle structure is then $(0,0)$--nicely embedded in $\calH$,
and we could then apply \refthm{NiceEmbeddedSolidTorus}.

Of course, though, there is no particular reason for $\calB_i$ to be $I$--bundles over discs.
But according to \refthm{IncompressibleHorizontalBoundary}, we can apply annular simplifications and then extend the parallelity bundle to a generalised parallelity bundle $\calB_i^+$ with horizontal incompressible boundary.
In fact, \refthm{IncompressibleHorizontalBoundary} is not immediately applicable, because if one sets $F$ in that theorem to be all of the Heegaard surface $S$, then it is not incompressible in the two solid tori $X_1$ and $X_2$.
But setting this aside for the moment, suppose that we could ensure that the  generalised parallelity bundle $\calB_i^+$ has horizontal incompressible boundary.
It cannot be all of $S$, and so it is a union of disjoint discs and annuli.
Hence, $\calB_i^+$ consists of $I$--bundles over discs, annuli and M\"obius bands.
We can replace any $I$--bundles over discs by two-handles and remove any $I$--bundles over annuli using annular simplifications.
Thus, if $\calB_i^+$ contains no $I$--bundles over M\"obius bands, then we end with a handle structure for one of the solid tori that is $(0,0)$--nicely embedded in $\calH$.
We could then apply \refthm{NiceEmbeddedSolidTorus}.

To fix the problem that $S$ is not incompressible in $X_1$ and $X_2$, we work with the pair $(X_i, F_i)$, where $F_i$ is a suitable subsurface of $S$.
This is obtained from $S$ by cutting along a curve or curves. One way of producing the required curves is via the following lemma.

\begin{lemma}
\label{Lem:ShortEssentialCurveInTorus}
Let $V$ be a solid torus with a handle structure $\calH$. Then there is a simple closed curve $C$ in $\bdy V$ satisfying the following conditions:
\begin{itemize}
\item it is standard in $\calH$;
\item it runs over each component of intersection between $\bdy V$ and the one-handles at most once;
\item it is essential in $\bdy V$ and non-meridional.
\end{itemize}
Suppose also that $D$ is a union of disjoint discs in $\bdy V$ such that
\begin{itemize}
\item it is a union of components of intersection between $\bdy V$ and the handles;
\item if $H$ and $H'$ are handles of $\calH$ where $H'$ has higher index than that of $H$, then whenever a component of
$\bdy V \cap H$ lies in $D$, so do all incident components of $\bdy V \cap H'$.
\end{itemize}
Then we may also ensure that $C$ is disjoint from $D$.
\end{lemma}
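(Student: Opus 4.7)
The strategy is to reformulate the problem as finding a simple cycle in a dual graph $\Gamma$ embedded in $\bdy V$, and then to use the topology of the solid torus to guarantee existence of one with the right homology class.

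Define $\Gamma$ as the graph on $\bdy V$ with one vertex in each component of $\bdy V \cap H$ for each zero-handle $H$, and one edge crossing each component of $\bdy V \cap H$ for each one-handle $H$ as a vertical arc of the form $D^1 \times \{p\}$, connecting the two adjacent vertices. The complementary regions $\bdy V \setminus \Gamma$ are precisely the components of $\bdy V \cap H$ for the two-handles $H$, each an open disc. Up to isotopy on $\bdy V$, the simple closed curves satisfying the first two bulleted conditions of the lemma correspond exactly to simple cycles in $\Gamma$; such a cycle is essential and non-meridional in $\bdy V = T^2$ precisely when its image in $H_1(V) = \ZZ$ is nonzero.

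Form the subgraph $\Gamma_D \subset \Gamma$ by deleting every vertex whose zero-handle piece lies in $D$ together with all incident edges (which lie in $D$ by the downward-closure hypothesis), and then additionally deleting every edge whose one-handle piece lies in $D$. Then $\Gamma_D \subset \bdy V \setminus D$, so any simple cycle in $\Gamma_D$ yields a candidate for $C$ automatically disjoint from $D$.

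The main task is to show that $\Gamma_D$ carries a simple cycle with nonzero image in $H_1(V)$. Let $V_D$ be the union of the zero- and one-handles of $\calH$ whose $\bdy V$-portions are not in $D$. The downward-closure of $D$ ensures that $V \setminus V_D$ is a disjoint union of $3$-balls attached to $V_D$ along discs: namely the two-handles of $\calH$, together with the three-dimensional ``stars'' obtained by thickening each zero-handle component of $D$ by its incident one-handle components of $D$. Attaching $3$-balls along $2$-discs cannot enlarge $\pi_1$, so the inclusion $V_D \hookrightarrow V$ is $\pi_1$-surjective, and hence some loop in $V_D$ represents a generator of $\pi_1(V) = \ZZ$. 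Pushing this loop to $\bdy V_D \cap \bdy V$ and then combinatorially into $\Gamma_D$ yields a $1$-cycle there with nonzero image in $H_1(V)$. Writing it as a $\ZZ$-linear combination of simple cycles in $\Gamma_D$, at least one summand has nonzero image in $H_1(V)$; the corresponding simple closed curve on $\bdy V$ is the desired $C$.

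The chief obstacle is justifying the decomposition $V = V_D \cup (\textrm{$3$-balls attached along discs})$: careful bookkeeping with the downward-closure hypothesis is required to verify that each remaining piece of $V$ is indeed a $3$-ball glued to $V_D$ along a disc in $\bdy V_D$, which is what lets us conclude $\pi_1$-surjectivity of $V_D \hookrightarrow V$.
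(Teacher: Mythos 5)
Your strategy is genuinely different from the paper's. The paper runs a minimal-position argument: take any standard curve disjoint from $D$ that is non-zero and non-meridional in $H_1(\bdy V;\ZZ_2)$, minimise the number of crossings with the one-handle faces, and if some face is crossed at least twice perform a local surgery that splits off a shorter curve still non-zero and non-meridional mod $2$. You instead argue homologically via a dual graph. The graph-theoretic idea is sound and would give a clean alternative proof, but the 3-dimensional step you flag as the ``chief obstacle'' is indeed where the argument as written breaks. The decomposition $V = V_D \cup (\text{3-balls attached along discs})$ does not hold: $V_D$ is not well-defined, since a single zero- or one-handle of $\calH$ may meet $\bdy V$ in several components, some in $D$ and some not; the two-handles attach along annuli, not discs, so ``attaching $3$-balls along $2$-discs'' misdescribes the gluing; and even granting $\pi_1(V_D)\to\pi_1(V)$ surjective, one must still argue that a loop in $V_D$ can be pushed to $\bdy V_D\cap\bdy V$ and then into $\Gamma_D$, which is not automatic (the spine of the handlebody $V_D$ lives in its interior, not on the stated subsurface of its boundary).

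The repair is to drop the 3-dimensional detour entirely and argue on the surface. Let $\Sigma_D\subset\bdy V$ be the union of the zero- and one-handle faces of $\calH$ on $\bdy V$ that do not lie in $D$; then $\Gamma_D$ is a deformation retract of $\Sigma_D$, so $H_1(\Gamma_D)\to H_1(\Sigma_D)$ is an isomorphism. Downward-closure of $D$ guarantees that every two-handle face outside $D$ has its entire boundary in $\Sigma_D$, so $\bdy V\setminus D$ is obtained from $\Sigma_D$ by attaching $2$-cells and $H_1(\Sigma_D)\to H_1(\bdy V\setminus D)$ is onto. Since $D$ is a union of disjoint discs, $H_1(\bdy V\setminus D)\to H_1(\bdy V)$ is onto, and $H_1(\bdy V)\to H_1(V)$ is onto. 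Composing, $H_1(\Gamma_D)\to H_1(V)\cong\ZZ$ is surjective, and the rest of your argument (pick a cycle with non-zero image, decompose into simple cycles, take a summand with non-zero image, realise it as a standard simple closed curve) goes through as written. One small imprecision worth noting: standard curves running over each one-handle face at most once correspond to closed walks in $\Gamma$ using each edge at most once, which may repeat vertices, not only to simple cycles; but since simple cycles do realise such curves and your argument produces a simple cycle, this is harmless.
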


\begin{proof}
Any essential curve in $\bdy V$ may be isotoped to a standard one, by first pushing it off the two-handles and then making it vertical in the one-handles. We consider all standard simple closed curves that are non-zero and non-meridional in $H_1(\bdy V; \ZZ/2\ZZ)$. We let $C$ be such a curve that runs over the one-handles the fewest number of times.
Then if it runs over a component of intersection between a one-handle and $\bdy V$ more than once, we can modify it to reduce this number by $2$. This might create a disconnected one-manifold. But if so, then we just focus on one component. We can choose this component to be non-zero and non-meridional in $H_1(\bdy V; \ZZ/2\ZZ)$. Thus, under the assumption that $C$ runs over the one-handles the fewest number of times, we deduce that $C$ in fact runs over each component of intersection between the one-handles and $\bdy V$ at most once.

Consider now the case where there are also the discs $D$. Then we isotope any essential curve off $D$ and make it standard. We consider such a curve that is non-zero and non-meridional in $H_1(\bdy V; \ZZ/2\ZZ)$. Among all such curves disjoint from $D$, we let $C$ be one that runs over the one-handles the fewest number of times. The same argument as above then applies.
\end{proof}

\begin{lemma}
\label{Lem:RemoveThreeCurves}
Let $M$ be a compact three-manifold with a handle structure $\calH$. Let $\calB$ be the parallelity bundle for $(M, \bdy M)$. Let $C$ be a standard curve in $\bdy M$ that is disjoint from $\bdy_h \calB$. Let $C'$ be three parallel copies of $C$, and let $F$ be $\bdy M \cut N(C')$. Then the parallelity bundle for $(M,F)$ is equal to $\calB$.
\end{lemma}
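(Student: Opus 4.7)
My plan is to prove the equality by double inclusion. Write $\calB^*$ for the parallelity bundle for $(M, F)$.

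For $\calB \subseteq \calB^*$: If $H \in \calB$ admits a product structure $D^2 \times I$ with $H \cap \bdy M = D^2 \times \bdy I$, then its horizontal boundary lies in $\bdy_h \calB$. Since $C$ is disjoint from $\bdy_h \calB$ by hypothesis, taking the three parallel copies $C'$ sufficiently close to $C$ gives $N(C') \cap \bdy_h \calB = \emptyset$. Hence $H \cap \bdy M \subseteq F$, so $H \cap F = H \cap \bdy M = D^2 \times \bdy I$. The vertical condition on other handles does not depend on the choice of $S$, so $H \in \calB^*$.

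For $\calB^* \subseteq \calB$: Let $H \in \calB^*$ have product structure $D^2 \times I$ with $H \cap F = D^2 \times \bdy I$ as sets. Writing $V := \bdy D^2 \times I$ for the lateral surface, I will show $V \cap N(C') = \emptyset$; then $H \cap \bdy M = H \cap F = D^2 \times \bdy I$, so $H \in \calB$ with the same product structure.

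Suppose for contradiction that $V \cap N(C') \neq \emptyset$. Then $V$ contains a vertical rectangle $R = \gamma \times I$ lying inside one of the three annular components $A_i$ of $N(C')$. By standardness of the parallel copy $C'_i$ and the thinness of the regular neighborhood $N(C')$, the set $V \cap A_i$ is a thin band in $V$ around the arcs of $C'_i \cap V$, and the two vertical edges $\bdy \gamma \times I$ of $R$ coincide with arcs of $\bdy A_i \cap V$. Since $\bdy A_i \subseteq F$, these one-dimensional arcs lie in $H \cap F$. But their interiors sit in the interior of the lateral surface, not on the corner curves $\bdy V = \bdy D^2 \times \bdy I$ which form the boundary of $D^2 \times \bdy I$. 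So $H \cap F$ strictly contains $D^2 \times \bdy I$ as a set, contradicting the set equality required for $H$ to be a parallelity handle for $(M, F)$. Hence $V \cap N(C') = \emptyset$.

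The main obstacle is justifying that the vertical edges of rectangles in $V \cap A_i$ actually lie on $\bdy A_i$. This hinges on standardness of $C$, which controls how each $C'_i$ passes through the handles of $\calH$, together with the three-parallel-copies setup, which supplies the needed geometric flexibility --- for instance, ruling out configurations where a rectangle in $V \cap A_i$ has its vertical edges in the interior of $A_i$ on the boundary of an adjacent handle. One likely needs to treat zero-handles, one-handles, and two-handles separately; the two-handle case is easiest since standardness forces $C$ (and its parallel copies) to miss $H$ entirely.
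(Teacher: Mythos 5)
Your first inclusion matches the paper's. The second inclusion has a genuine gap, and you have correctly located where it lies: the vertical edges $\bdy \gamma \times I$ of the rectangle $R$ do not in fact lie on $\bdy A_i$. Since every attachment of another handle to $H$ is a vertical strip $\beta \times I$, each complementary piece $R = \gamma \times I$ of the vertical boundary annulus $\bdy D^2 \times I$ has its two vertical edges abutting exactly those attachments, and $A_i$ simply continues into those adjacent handles; so the vertical edges sit in the \emph{interior} of $A_i$. What lies on $\bdy A_i$ are the \emph{horizontal} edges $\gamma \times \bdy I$, but those also lie on the corner curves $\bdy D^2 \times \bdy I$, hence already inside $H \cap F = D^2 \times \bdy I$, and yield no contradiction. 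So the argument collapses precisely at the step you flagged as the main obstacle.

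A second warning sign is that your proof never invokes the hypothesis that $C'$ is \emph{three} parallel copies of $C$, which is essential (a single copy crossing a one-handle once or twice could produce a new parallelity handle for $(M,F)$). The paper's proof is a handle-by-handle count. Two-handles are automatically disjoint from the standard curves $C'$. For a one-handle $H = D^2 \times I$, the vertical boundary annulus is divided by the two zero-handle attachments $\beta_1 \times I$, $\beta_2 \times I$ into just two further vertical rectangles $\beta_3 \times I$, $\beta_4 \times I$; each of these must be either \emph{entirely} a two-handle attachment (the attaching rectangle of a two-handle runs the length of the one-handle, so verticality forces it to fill the whole of $\beta_j \times I$) or \emph{entirely} a single strip of $N(C')$; so $C'$ crosses $H$ at most twice, and since three parallel copies cross a multiple of three times, they cross zero times, giving $H \cap \bdy M = H \cap F$. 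The zero-handle case then reduces to the one-handle case. You should replace the rectangle-edge contradiction by a mod-three count of this kind.
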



\begin{proof}
Every handle $H$ of $\calB$ is, by definition, a parallelity handle for $(M, \bdy M)$ and so satisfies $H \cap \bdy M = H \cap \bdy_h B$. Since $C$ is disjoint from $\bdy_h \calB$, it misses $H$, and therefore $H \cap F = H \cap \bdy M$. So, $H$ is a parallelity handle for $(M, F)$.

Now consider a parallelity handle $H$ for $(M, F)$. 
If $H$ is a two-handle, it is disjoint from the standard curves $C'$, and therefore $H \cap \bdy M = H \cap F$. 
Hence, in this case, $H$ is a parallelity handle for $(M, \bdy M)$. 
Suppose now that $H$ is a one-handle. 
It has the form $D^2 \times I$, where $H \cap F = D^2 \times \bdy I$. 
Each component of intersection between $H$ and any other handle has the form $\beta \times I$ for an arc $\beta$ in $\bdy D^2$. 
There are two such components arising from the intersection between $H$ and the incident zero-handles. 
There may be a further one or two components, arising from the intersection with the two-handles. 
If $H$ does have two components of intersection with the two-handles, then $H \cap \bdy M = H \cap F$, and therefore $H$ is a parallelity handle for $(M, \bdy M)$. 
On the other hand, if $H$ has fewer than two components of intersection with the two-handles, then it intersects $\bdy M \cut F$ once or twice. 
Therefore, $C'$ runs along the one-handle once or twice. 
But $C'$ consists of three parallel copies of $C$, and therefore the number of times that it runs along this one-handle is a multiple of three. 
This is a contradiction. 
This completes the proof when $H$ is a one-handle. 
Now suppose that $H$ is a zero-handle. 
If it is disjoint from the one-handles, then $C'$ misses it, since $C'$ is standard. 
Hence, in this case, $H$ is a parallelity handle for $(M, \bdy M)$. 
On the other hand, if $H$ intersects a one-handle, then this is also a parallelity handle for $(M,F)$ and hence, as argued above, this one-handle has two components of intersection with the two-handles. 
Thus, $\bdy D^2 \times I$ consists of intersections with one-handles and two-handles in an alternating fashion around the annulus $\bdy D^2 \times I$.
In particular, $H \cap \bdy M = D^2 \times \bdy I$, and therefore $H$ is again a parallelity handle for $(M, \bdy M)$.
\end{proof}

We are now equipped to prove the theorem.

\begin{restate}{Theorem}{Thm:LensSpaceCurve}
Let $M$ be a lens space other than $\RP^3$.
Let $\calT$ be any triangulation of $M$.
Then there is a simple closed curve $C$ that is a subcomplex of $\calT^{(86)}$,
such that the exterior of $C$ is either a solid torus or a twisted $I$--bundle over a Klein bottle.
\end{restate}

\begin{proof}

Let $S$ be an almost normal Heegaard torus in $\calT$, which exists by \refthm{AlmostNormalHeegaard}.
By \refprop{FromAlmostNormalToNormal}, $S$ can be arranged to be normal in the barycentric subdivision $\calT^{(1)}$.
Let $\calH$ be the handle structure of $M$ that is dual to $\calT^{(1)}$.
Cutting $M$ along $S$ gives two solid tori $X_1$ and $X_2$.
These then inherit handle structures $\calH_1$ and $\calH_2$.

Let $\calB_i$ be the parallelity bundle for $(X_i, \bdy X_i)$ with handle structure $\calH_i$.
Then $\bdy_v \calB_i$ is a collection of properly embedded annuli in $X_i$.
Their boundary curves are a (possibly empty) collection of essential curves on
$\bdy X_i$, each with
the same slope $\alpha_i$, together with some inessential curves on $\bdy X_i$.
We let $\alpha_i= \emptyset$ if there are no essential curves.

We consider three cases:
\begin{enumerate}
\item $\alpha_1$ or $\alpha_2$ is empty;
\item $\alpha_1$ and $\alpha_2$ are equal and non-empty;
\item $\alpha_1$ and $\alpha_2$ are distinct and non-empty.
\end{enumerate}

\begin{case}
\label{Case:Empty}
$\alpha_1$ or $\alpha_2$ is empty.
\end{case}

Say that $\alpha_i$ is empty. We claim that $\bdy_h \calB_i$ is a subsurface of $S$ that lies in a collection of discs. Otherwise, $\bdy_h \calB_i$ contains a component $F$ that is $S$ minus some open discs. There cannot be another copy of $F$ in $S$ that is disjoint from $F$, and so $F$ must be the horizontal boundary of a twisted $I$--bundle component of $\calB_i$. The zero section of this $I$--bundle is therefore a non-orientable surface. By attaching annuli to its boundary that lie in $\bdy_v \calB_i$ and then capping off with discs, we obtain a closed non-orientable surface in the solid torus. This does not exists, which proves the claim.

We may take the discs $D$ that contain $\bdy_h \calB_i$ to be as in \reflem{ShortEssentialCurveInTorus}. Specifically, each boundary component of $\bdy_h \calB_i$ bounds a disc in $S$, and we set $D$ to be the union of these discs (which may be nested).
Hence, there is a standard curve $C_i$ in $S$ that misses these discs, runs over each component of intersection between $\bdy X_i$ and the one-handles of $\calH_i$ at most once and is essential in $\bdy X_i$ and non-meridional. Let $C'_i$ be three parallel copies of $C_i$ and let $F_i = S \cut N(C_i')$. By \reflem{RemoveThreeCurves}, the parallelity bundle for $(X_i, F_i)$ is exactly $\calB_i$.

We now apply as many annular simplifications to $\calH_i$ as possible, forming a handle structure $\calH_i'$ for a solid torus isotopic to $X'_i$ with subsurface $F'_i$ of $\bdy X'_i$. This process does not introduce parallelity handles.
Thus, the horizontal boundary of the parallelity bundle $\calB'_i$ for $\calH'_i$ also lies in a union of disjoint discs in $\bdy X'_i$. We now extend this parallelity bundle to a maximal generalised parallelity bundle $\calB_i^+$. By \reflem{BundleBoundaryCurves}, the boundary curves of $\bdy_h \calB_i^+$ form a subset of the boundary curves of $\bdy_h \calB_i'$. They are therefore also inessential in $\bdy X'_i$. By \refthm{IncompressibleHorizontalBoundary}, $\bdy_h \calB_i^+$ is incompressible in $X'_i$. Hence it is a union of discs. So, $\calB_i^+$ consists of $I$--bundles over discs. Remove these discs and replace them by two-handles. The resulting handle structure is $(0,0)$--nicely embedded in $\calH$. So, by \refthm{NiceEmbeddedSolidTorus}, a core curve of $X'_i$ is simplicial in $\calT^{(79)}$. This proves the theorem in this case.

\begin{case}
\label{Case:EqualNonEmpty}
$\alpha_1$ and $\alpha_2$ are equal and non-empty.
\end{case}

For some $i \in \{ 1,2 \}$, $\alpha_i$ is non-meridional in $X_i$, since $M$ is not $S^2 \times S^1$. Fix some such $i$.
Let $C_i$ be a boundary component of $\bdy_v \calB_i$
with slope $\alpha_i$. Then isotope $C_i$ a little away from $\bdy_h \calB_i$ so that it becomes a standard curve in $\calH_i$.
Let $C'_i$ be three parallel copies of $C_i$ and let $F_i$ be $S \cut N(C_i')$.
We view $\calH_i$ as a handle structure for $(X_i, F_i)$. Again the parallelity bundle for $(X_i, F_i)$ is $\calB_i$, by \reflem{RemoveThreeCurves}.
Perform as many annular simplification to $\calH_i$ as possible, forming the three-manifold $X'_i$ with subsurface $F'_i$ of $\bdy X'_i$. Let $\calB_i'$ be its parallelity bundle. The boundary curves of its horizontal boundary therefore are inessential in $\bdy X'_i$ or have slope $\alpha_i$. Then, since $\alpha_i$ is non-meridional in $X_i$, \refthm{IncompressibleHorizontalBoundary} gives that we may extend $\calB'_i$ to a generalised parallelity bundle  $\calB_i^+$
with horizontal boundary that is incompressible in $X_i'$. The boundary curves of $\bdy_h \calB_i^+$ are inessential or have slope $\alpha_i$, by \reflem{BundleBoundaryCurves}.

Each component of $\calB_i^+$ is an $I$--bundle over a disc, annulus or M\"obius band. In fact, no component is an $I$--bundle over an annulus, for the following reason. A vertical boundary component of such an $I$--bundle is an incompressible annulus properly embedded in the solid torus $X'_i$. It is therefore boundary parallel in $X'_i$. By picking the vertical boundary component of the $I$--bundle appropriately, we may assume that the product region between it and an annulus in $\bdy X'_i$ contains the $I$--bundle. Hence, we could have performed an annular simplification, contradicting our assumption that as many of these as possible were performed.

Suppose that no component of $\calB_i^+$ is an $I$--bundle over a M\"obius band. Then we can replace $\calB_i^+$ by two-handles as in \refcase{Empty}.
The resulting handle structure is $(0,0)$--nicely embedded in $\calH$. So, by \refthm{NiceEmbeddedSolidTorus}, a core curve of $X'_i$ is simplicial in $\calT^{(79)}$.

So we are left with the case where some component of $\calB_i^+$ is an $I$--bundle over a M\"obius band. Then let $j = 3 - i$. Then $\alpha_j$ cannot be meridional in $X_j$, because we could patch a meridian disc for $X_j$ onto a M\"obius band in $X_i$ to get an embedded projective plane. This would imply that $M$ is $\RP^3$, contrary to assumption. Note that here we are using the fact that $\alpha_1$ and $\alpha_2$ are equal. Thus, the above argument gives a maximal generalised parallelity bundle $\calB_j^+$ for $(X'_j, F'_j)$ with $\bdy_h \calB_j^+$ incompressible in $X_j'$.

The only situation where the theorem is not proved in this case is when the generalised parallelity bundles $\calB_i^+$ and $\calB_j^+$ both contain components $B_i$ and $B_j$ that  are $I$--bundles over M\"obius bands. Then, $\alpha_i$ and
$\alpha_j$ are the boundaries of M\"obius bands in $X_i$ and
$X_j$ and, after an isotopy, these can be glued to form an embedded
Klein bottle in $M$. The exterior of this Klein bottle is a solid torus $V$. This is a general fact about Klein bottles in lens spaces (see the proof of \reflem{EmbeddedKleinBottle}).
But it can be seen directly as follows.
The exterior of $B_i$ in $X'_i$ is a solid torus which winds twice along $X'_i$.
Moreover, $\bdy_v B_i$ is longitudinal in this solid torus.
Similarly, the exterior of $B_j$ in $X'_j$ is a solid torus. If the two annuli $\bdy X'_i \cut \bdy_h B_i$ and $\bdy X'_j \cut \bdy_h B_j$ are isotoped
to be equal, then when $X'_i \cut B_i$ and $X'_j \cut B_j$ are glued
along this annulus, the result is a solid torus that is isotopic to $V$.
Each boundary component of $\bdy_v B_i$ is isotopic to a core curve of $V$.
In this case, we take the required curve $C$ to be one of these boundary components. Let $A$ be $\bdy_v \calB_i^+$ lying in the boundary of the manifold $M' = X'_i \cut \calB_i^+$. Then $(M', A)$, with its inherited handle structure, is $(0,0)$--nicely embedded in $\calH$.
By \refthm{NiceEmbeddedManifold}, $A$ is simplicial in $\calT^{(30)}$.
In particular, $C$, which is a boundary component of $A$ is simplicial in $\calT^{(30)}$, as required.

\begin{case}
\label{Case:UnequalNonEmpty}
$\alpha_1$ and $\alpha_2$ are distinct and non-empty.
\end{case}

For $i \in \{ 1, 2 \}$, let $C_i$ be some boundary curve of $\bdy_v \calB_i$ with slope $\alpha_i$, isotoped a little so that it becomes a standard curve. Note that this isotopy pushes $C_i$ into handles of $\calH_i$ that are not parallelity handles.

Now let $F$ be $S \cut N(C_1 \cup C_2)$. Since this lies in the complement of $C_1$ and $C_2$, which are essential curves with the distinct slopes,
$F$ is a union of discs. In particular, it is incompressible in $X_1$ and $X_2$.

Pick some $i$ and perform as many annular simplifications to $(X_i, F)$ as possible, giving a
handle structure $\calH'_i$ for a pair $(X'_i, F')$ that is isotopic to $(X_i,F)$.
Let $\calB_i^+$ be a maximal generalised parallelity bundle for $(X'_i, F')$ that contains its parallelity bundle.
By \refthm{IncompressibleHorizontalBoundary}, the horizontal boundary
of $\calB^+_i$ is incompressible. Since it is a subsurface of $F$, it is a union of discs.
Thus, $\calB^+_i$ consists of $I$--bundles over discs. Replace
each of these with a two-handle, giving a handle structure $\calH'$.

We claim that $\calH'$ is $(10,6)$--nicely embedded in $\calH$.
Because $\calH'$ is obtained from $\calH$ by cutting along the normal surface $S$,
it is $(k, \ell)$--nicely embedded in $\calH$ for some $k$ and $\ell$, but we need to show why we can take $k = 10$ and $\ell = 6$.
Consider a zero-handle $H'_0$ of $\calH'$ that lies between normally parallel discs. Let $H_0$ be the zero-handle of $\calH$ that contains it.
Since $H'_0$ does not lie in $\calB_i^+$, it is not a parallelity handle for $(X'_i, F')$. Hence, it intersects $C_1$ or $C_2$ in $S$.
Hence, in a component of $H_0 \cut S$ incident to $H'_0$, there must be a zero-handle of $\calH'$ that does not lie between parallel normal discs of $S$. Thus, at most $10$ zero-handles of $\calH'$ in $H_0$ do not lie between parallel normal discs. This number $10$ is twice the number of normal disc types of $S$ that can simultaneously exist within $H_0$.
Therefore, we may set $k = 10$. A similar argument gives that $\ell = 6$ also suffices.

Since $\calH'$ is $(10,6)$--nicely embedded in $\calH$,
\refthm{NiceEmbeddedSolidTorus} gives that there is a core curve of $X'_i$ that is simplicial in $\calT^{(86)}$. This proves the theorem in this case.
\end{proof}


We now use \refthm{DerivedSolidTorus} and \refthm{LensSpaceCurve} to prove our main technical result.

\begin{restate}{Theorem}{Thm:LensSpaceCore}
Let $M$ be a lens space, which is neither a prism manifold nor a copy of $\RP^3$.
Let $\calT$ be any triangulation of $M$.
Then the iterated barycentric subdivision $\calT^{(86)}$ contains a core curve of $M$ in its one-skeleton.
Furthermore, $\calT^{(139)}$ contains in its one-skeleton the union of the two core curves.
\end{restate}

\begin{proof}
Let $\calT$ be a triangulation of a lens space $M$, other than a prism manifold or $\RP^3$.
Note that the lens spaces that contain an embedded Klein bottle are exactly the prism manifolds by \reflem{EmbeddedKleinBottle}.
So, by \refthm{LensSpaceCurve}, there is a core curve $C$ of $M$ that is simplicial in $\calT^{(86)}$.
A regular neighbourhood of $C$ is simplicial in $\calT^{(88)}$.
Removing the interior of this regular neighbourhood
gives a triangulated solid torus. By \refthm{DerivedSolidTorus}, this contains a core curve that is
simplicial in its 51st barycentric subdivision. Hence, $M$ contains the union of its two core curves as a
simplicial subset of $\calT^{(139)}$.
\end{proof}

\section{Elliptic manifolds}
\label{Sec:Elliptic}

A three-manifold $M$ is \emph{elliptic} if there is a subgroup $\Gamma \subset \SO(4)$, acting freely on the three-sphere, with $M \homeo S^3/\Gamma$.
Since the universal cover of $M$ is the
three-sphere, the fundamental group $\pi_1(M)$ is necessarily finite.
Since all elements of $\SO(4)$ are orientation preserving, all elliptic
manifolds are orientable.  To give names to all of the elliptic
manifolds we invoke a beautiful result of Seifert and
Threlfall~\cite[page~568]{SeifertThrelfall33}.  See
also~\cite[Corollary 4.4.11]{Thurston97}.

\begin{theorem}
\label{Thm:Seifert}
Every elliptic manifold admits a Seifert fibred structure.
\end{theorem}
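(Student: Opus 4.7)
The plan is to descend the Hopf fibration of $S^3$ to a Seifert fibration on $M = S^3/\Gamma$. The idea is to find a circle subgroup of $\SO(4)$ whose action on $S^3$ is permuted (setwise, fiber-by-fiber) by $\Gamma$; the orbit projection then descends to $M$ and exhibits the required fibration.

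First I would pass to the quaternionic model. Identify $S^3$ with the group of unit quaternions and use the double cover
\[
\Phi \from S^3 \times S^3 \to \SO(4), \qquad \Phi(p, q)(x) = p x q^{-1},
\]
with kernel $\{\pm(1,1)\}$, to lift $\Gamma$ to a finite subgroup $\hat\Gamma \subset S^3 \times S^3$. Let $L$ and $R$ denote the images of $\hat\Gamma$ under the two projections; each is a finite subgroup of $S^3 \isom \operatorname{SU}(2)$, and so lies in the classical list: cyclic, binary dihedral, or binary polyhedral (binary tetrahedral, octahedral, or icosahedral).

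Next I would invoke Hopf's classification of finite subgroups of $\SO(4)$ acting freely on $S^3$: after swapping the two factors if necessary, one may assume $L$ is cyclic, and after conjugation in $S^3$ the subgroup $L$ sits inside a chosen maximal torus $T \subset S^3$. With this in hand, consider the Hopf fibration $h \from S^3 \to S^2$ whose fibres are the left-multiplication orbits $T \cdot x$. The key calculation is that $\hat\Gamma$ preserves this fibration: an element $(p, q) \in \hat\Gamma$ sends $T \cdot x$ to $p T x q^{-1} = T \cdot (p x q^{-1})$, since $p \in L \subseteq T$ gives $p T = T$ and right multiplication by $q^{-1}$ commutes with left multiplication by $T$. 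Hence fibres map to fibres, and $h$ descends to a map $M \to S^2 / \Gamma'$, where $\Gamma'$ is the action induced by $\Gamma$ on the base. The generic fibre is a circle, and the exceptional fibres occur exactly over those points of the base whose stabilizer in $\Gamma'$ is non-trivial; this is the desired Seifert fibration on $M$.

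The main obstacle is the classification step, namely that one of $L$, $R$ may always be taken cyclic. This is not formal and requires a case analysis of those finite subgroups of $S^3 \times S^3$ whose image in $\SO(4)$ acts freely on $S^3$. The delicate cases are the binary polyhedral ones, which give platonic manifolds: there one must verify that freeness of the $\Gamma$-action forces the factor opposite to the binary polyhedral one to be contained in a cyclic subgroup. The binary dihedral cases, giving prism manifolds, are easier, since a binary dihedral group already lies in the normalizer of a maximal torus. With the classification in hand, the descent argument above is essentially formal, and the Seifert invariants of $M$ can be read off from the image of $R$ in the Hopf base $S^2$.
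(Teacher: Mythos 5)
The paper does not actually prove this theorem; it is quoted from Seifert--Threlfall (and Thurston, Corollary~4.4.11) as a black box. Your outline --- lift $\Gamma$ to $\hat\Gamma \subset S^3 \times S^3$ via $\Phi(p,q)\colon x \mapsto pxq^{-1}$, normalise so that the left projection $L$ lies in a maximal torus $T$, and then descend the Hopf fibration whose fibres are the left $T$-orbits --- is precisely the proof that appears in those references, so in that sense you have recovered the intended argument, and the descent calculation $pTxq^{-1} = T(xq^{-1})$ is correct and really is all that is needed once $L \subseteq T$.

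The one genuine gap is the one you flag yourself: the assertion that, after swapping factors, $L$ can be taken cyclic. This is not a formal consequence of the ADE classification of finite subgroups of $S^3$; it genuinely uses freeness of the $\Gamma$-action. The relevant mechanism is that $\Phi(p,q)$ has a fixed point on $S^3$ precisely when $p$ and $q$ are conjugate in $S^3$, i.e.\ have equal real part. If both $L$ and $R$ were, say, binary dihedral, then $\hat\Gamma$ would be forced to contain a pair $(p,q)$ with both entries of order four and hence of real part zero, producing a fixed point. Ruling this out in all cases (including the three binary polyhedral groups) is exactly the content of the Seifert--Threlfall classification, and it is the bulk of the work. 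So the proposal is a correct and faithful outline of the standard proof, but it is a sketch: the step you label ``not formal'' is the theorem, and as written it is deferred rather than proved.
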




Recall that a Seifert fibred structure $\calF$ on a three-manifold $M$ is a foliation by circles where each circle $C$ has a neighbourhood $U$ that is a fibred solid torus.
The fibre $C$ has \emph{Seifert invariant} $q/p$ if every fibre in $\calF|\bdy U$ has slope $q/p$ with respect to a framing $\subgp{\lambda, \mu}$.
Here $\mu$ is the meridian of $U$ and $p$ is necessarily non-zero.
Note that replacing $\lambda$ by $\lambda + \mu$ changes the slope of the fibre to be $(q - p)/p$.

We simplify the notation $(M, \calF)$ to just $M$ when the foliation is understood.
We say that a fibre $C \subset M$ is \emph{generic} if it has integral Seifert invariant  (in other words, $|p| = 1$) and \emph{critical} otherwise.
If we quotient all fibres of $\calF$ to points we get the \emph{base orbifold} $B = M/S^1$, where critical fibres project to orbifold points.

The quotient induces a surjection $\pi_1(M) \to \pi_1^\orb(B)$ where the kernel is cyclic (possibly trivial) and generated by the generic fibre.
Since $\pi_1(M) \isom \Gamma$ is finite, we deduce that $\pi_1^\orb(B)$ is also finite.
Here, then, are the possibilities for $B$.

\begin{lemma}
\label{Lem:AllYourBaseAreBelongToUs}
Suppose $B$ is a closed two-dimensional orbifold (without mirrors).
Then $\pi_1^\orb(B)$ is finite if and only if $\chi^\orb(B) > 0$.
Thus $B$ is one of the following.
\begin{itemize}
\item
Cyclic -- $S^2$, $S^2(p)$, $P^2$, $S^2(p,q)$.
\item
Dihedral -- $P^2(r)$, $S^2(2,2,r)$, where $r \geq 2$.
\item
Tetrahedral -- $S^2(2,3,3)$.
\item
Octahedral -- $S^2(2,3,4)$.
\item
Icosahedral -- $S^2(2,3,5)$.
\end{itemize}
The names indicate the orbifold fundamental group. \qed
\end{lemma}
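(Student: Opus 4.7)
The plan is to establish the equivalence by combining the orbifold Gauss--Bonnet / uniformisation theorem with a direct enumeration. Recall that for a closed $2$-orbifold $B$ with underlying surface $|B|$ and cone points of orders $n_1,\ldots,n_k$ one has
\[
\chi^\orb(B) \;=\; \chi(|B|) \;-\; \sum_{i=1}^k \Bigl(1 - \tfrac{1}{n_i}\Bigr),
\]
and this invariant is multiplicative in finite orbifold covers. If $B$ is good and $\pi_1^\orb(B)$ is finite, then a torsion-free finite index subgroup yields a closed surface cover $\Sigma \to B$ of some degree $d$; the group $\pi_1(\Sigma)$ injects into $\pi_1^\orb(B)$, so $\Sigma$ is $S^2$ or $\RP^2$ and $\chi^\orb(B) = \chi(\Sigma)/d > 0$. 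Conversely, if $\chi^\orb(B) > 0$ the orbifold uniformisation theorem realises $B$ as a spherical quotient $S^2/\Gamma$ for a finite $\Gamma < O(3)$, so $\pi_1^\orb(B) \isom \Gamma$ is finite.

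The enumeration is then bookkeeping on the inequality $\chi^\orb(B) > 0$. If $|B| = S^2$ the condition is $\sum (1 - 1/n_i) < 2$. With at most two cone points this is automatic, yielding $S^2$, $S^2(p)$ and $S^2(p,q)$. With three cone points ordered $p \le q \le r$ the inequality becomes $1/p + 1/q + 1/r > 1$; its solutions are $(2,2,r)$ for any $r \ge 2$ together with the platonic triples $(2,3,3)$, $(2,3,4)$, and $(2,3,5)$. Four or more cone points force $\sum(1 - 1/n_i) \ge 4 \cdot \tfrac{1}{2} = 2$, which is ruled out. If $|B| = \RP^2$ the condition $\sum(1 - 1/n_i) < 1$ permits at most one cone point, giving $\RP^2$ and $\RP^2(r)$. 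Any other underlying surface already has $\chi(|B|) \le 0$ and admits no cone-point decorations compatible with $\chi^\orb > 0$. One then identifies the named groups from the standard orbifold presentation
\[
\pi_1^\orb(S^2(n_1,\ldots,n_k)) \;\isom\; \group{x_1,\ldots,x_k}{x_i^{n_i} = 1,\ x_1 \cdots x_k = 1}
\]
and its non-orientable analogue, matching each to a subgroup of $O(3)$ of the asserted cyclic, dihedral, tetrahedral, octahedral or icosahedral type.

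The main obstacle I anticipate is the good/bad orbifold distinction in the forward implication. The ``pass to a surface cover'' argument works cleanly only when $B$ is good, whereas $S^2(p)$ and $S^2(p,q)$ with $p \ne q$ are bad and do not cover any closed surface. Fortunately these exceptional orbifolds appear only in the cyclic block of the list, where the orbifold fundamental group can be read directly from the presentation above (it is trivial for $S^2(p)$ and $\ZZ_{\gcd(p,q)}$ for $S^2(p,q)$); one verifies $\chi^\orb > 0$ for them by hand, bypassing uniformisation entirely.
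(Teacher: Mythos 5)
The paper does not actually supply a proof for this lemma (the statement ends with \verb|\qed|, treating it as standard), so there is nothing to compare against; I will assess the argument on its own terms.

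Your proof is correct and uses the standard machinery: the orbifold Euler characteristic formula, its multiplicativity under finite covers, orbifold uniformisation for the converse, direct enumeration from the inequality $\chi^\orb(B) > 0$, and hand-checking the bad orbifolds $S^2(p)$ and $S^2(p,q)$ with $p \ne q$, which you correctly identify as exactly the places where the covering-theoretic argument breaks down. A cosmetic simplification: when $\pi_1^\orb(B)$ is finite, the only torsion-free subgroup is the trivial one, so your ``torsion-free finite-index subgroup'' cover is simply the universal orbifold cover; for a good orbifold this is a simply-connected closed surface, hence $S^2$, giving $\chi^\orb(B) = 2/|\pi_1^\orb(B)| > 0$ directly and rendering the $\RP^2$ alternative unnecessary.

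One item you defer but should be aware of: the last step, ``matching each to a subgroup of $O(3)$ of the asserted type,'' does not go through literally for $P^2(r)$. Your own presentation gives $\pi_1^\orb(\RP^2(r)) = \langle a, x \mid a^2 x = 1,\ x^r = 1\rangle \cong \ZZ_{2r}$, which is cyclic, realised geometrically as the rotatory reflection group $S_{2r} \subset O(3)$ (the only order-$2r$ subgroup with quotient $\RP^2(r)$ and no mirrors; using $-I$ together with rotations instead would introduce a reflection, hence a mirror boundary). So the lemma's label ``Dihedral'' for $P^2(r)$ is not the isomorphism type of $\pi_1^\orb$; it reflects a downstream grouping (compare \reflem{LensPrismBase}, where $P^2(r)$ and $S^2(2,2,r)$ are grouped together as bases of prism manifolds, whose fundamental groups are dicyclic/binary-dihedral extensions). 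This is an imprecision in the lemma's framing rather than a gap in your argument, but since you explicitly claim the names can be matched, it is worth knowing the match is non-literal in this one case.
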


We call the last three the \emph{platonic} orbifolds.
In these cases we say the \emph{type} of $B$ is the orbifold fundamental group $\calP = \pi_1^\orb(B)$.
Here $\calP$ is one of the three platonic groups $A_4$, $S_4$ or $A_5$, respectively.

\begin{lemma}
\label{Lem:LensPrismBase}
Suppose $(M, \calF)$ is Seifert fibred and $B = M/S^1$ is the base orbifold.
If $B$ is cyclic or dihedral then $M$ is $S^2 \times S^1$, a lens space or prism manifold.
\end{lemma}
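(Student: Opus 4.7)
The plan is to work through each orbifold listed in \reflem{AllYourBaseAreBelongToUs} under the cyclic or dihedral headings, and in each case exhibit a decomposition of $M$ into two pieces, each of which is either a solid torus or the orientable $I$--bundle $K \twist I$. The crucial subroutine, used repeatedly, is that the total space of a Seifert fibration over a disk containing at most one cone point is a solid torus; this is just the definition of a fibred solid torus, and can be seen by noting that a regular fibre bounds a disk in the base, which lifts to a meridian disk of the total space.

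First I would handle the cyclic cases $B = S^2$, $B = S^2(p)$, and $B = S^2(p,q)$. In each case I split the base $S^2$ along a simple closed curve into two disks $D_1$ and $D_2$, chosen so that each $D_i$ contains at most one of the (at most two) cone points. By the subroutine above, the preimage of each $D_i$ is a solid torus $V_i$, so $M = V_1 \cup V_2$ is a genus one Heegaard splitting. By \refdef{Lens} (together with \reffac{Glue}), any such gluing is either $S^2 \times S^1$ or a lens space.

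Next I would handle the cases $B = P^2$ and $B = P^2(r)$. Decompose $B$ as the union of a M\"obius band $\mu$ and a disk $D$, where $D$ contains any cone point. The preimage of $D$ is again a solid torus. For the preimage of $\mu$ the key point is that an orientable Seifert fibration over a M\"obius band (without cone points) is canonically identified with the orientable $I$--bundle $K \twist I$: the orientation double cover of $\mu$ is an annulus, whose preimage is $T^2 \times I$, and the deck involution descends so that the quotient of the fibred $T^2 \times I$ is $K \twist I$ with the induced Seifert structure. Hence $M$ is a union of a solid torus and $K \twist I$ glued along their boundary tori; when $\pi_1(M)$ is finite, \refdef{Prism} says $M$ is a prism manifold, and otherwise the only Seifert fibred possibility is $S^2 \times S^1$ (the manifold $\RP^3 \connect \RP^3$, the other outcome of such a gluing, is reducible and hence not Seifert fibred).

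Finally I would handle the last dihedral case $B = S^2(2,2,r)$. Split $S^2$ into disks $D_1$ and $D_2$ with $D_1$ containing both order-$2$ cone points and $D_2$ containing the order-$r$ cone point. As before the preimage of $D_2$ is a solid torus. The preimage of $D_1$ is again $K \twist I$: this manifold carries a second standard Seifert structure over $D^2(2,2)$ whose two critical fibres are the cores of the two M\"obius bands that compose the central Klein bottle of $K \twist I$. This can be verified by taking the double cover of $D^2(2,2)$ branched over the two cone points, which is an annulus without cone points, and observing that the induced cover of the total space is the expected $T^2 \times I$. So $M$ is once more a union of a solid torus and $K \twist I$, and the argument concludes as in the previous step. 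The only genuinely delicate step throughout is the identification of the preimage with $K \twist I$ in the last two cases; once the orientation double cover is set up, everything else is a direct check of Seifert invariants.
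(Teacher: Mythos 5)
Your proof follows essentially the same strategy as the paper's: decompose the base orbifold along a curve (or an arc) so that the preimage of each piece is a fibred solid torus or a copy of $K \twist I$, and then invoke the definitions of lens space and prism manifold. The paper's version is more terse (it takes the preimage of a one-sided loop, resp.\ of an arc joining the two order-$2$ cone points, and observes directly that it is a Klein bottle), while you instead thicken to a M\"obius band or a disc neighbourhood and identify the preimage with $K \twist I$ via the orientation double cover; this is a perfectly legitimate, and arguably clearer, way of saying the same thing. So the two proofs are genuinely the same approach.

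However, there is a factual error in your parenthetical justification for excluding $\RP^3 \connect \RP^3$. You write that it ``is reducible and hence not Seifert fibred.'' This is false: $\RP^3 \connect \RP^3$ \emph{is} a Seifert fibred space (over $\RP^2$ with no exceptional fibres); it is the unique reducible orientable Seifert fibred manifold other than $S^2 \times S^1$, and indeed the paper itself exhibits it as $P(1,0)$ -- a gluing of $K\twist I$ to a solid torus -- just before \reflem{EmbeddedKleinBottle}. So reducibility does not rule it out. In fact, strictly as stated, \reflem{LensPrismBase} is slightly imprecise for exactly this reason: a Seifert fibration over the cyclic orbifold $P^2$ can have total space $\RP^3\connect\RP^3$, which is not $S^2\times S^1$, a lens space, or a prism manifold in the paper's sense (those definitions require finite $\pi_1$). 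The paper's own proof silently glosses over this case, and it is harmless in context because \reflem{LensPrismBase} is only applied to elliptic manifolds, where $\pi_1(M)$ is finite. If you want a correct way to dispose of the case, say that in the intended application $\pi_1(M)$ is finite (so the gluing cannot produce $\RP^3\connect\RP^3$ or $S^2\times S^1$), or, if you insist on arguing intrinsically, note that the statement should really carry the hypothesis that $M$ is irreducible or that $\pi_1(M)$ is finite. Do not rely on ``reducible, hence not Seifert fibred.''
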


\begin{proof}
Suppose that $B = M/S^1$ is a two-sphere with at most two orbifold points.
Let $\alpha$ be a loop in $B$ cutting $B$ into a pair of discs each with at most one marked point.
Since $M$ is orientable and since $\alpha$ is two-sided, the preimage of $\alpha$ in $M$ is a torus $T$.
The preimage of either component of $B - \interior(N(\alpha))$ is a fibred solid torus and thus $M$ is a lens space or $S^2 \times S^1$.

Suppose that $B$ is a projective plane with at most one orbifold point.
Let $\alpha$ be a loop in $B$ reversing orientation.
So $B - \interior(N(\alpha))$ is a disc with at most one orbifold point.
Since $M$ is orientable, and since $\alpha$ is one-sided, the preimage of $\alpha$ in $M$ is a Klein bottle.
It follows that $M$ is a prism manifold.

Suppose that $B = S^2(2,2,r)$.
Let $\alpha$ be an arc connecting the orbifold points of order $2$.
Again, the preimage of $\alpha$ is a Klein bottle and $M$ is a prism manifold.
\end{proof}

We say that a Seifert fibred space $(M, \calF)$ is \emph{platonic} if $B = M/S^1$ is platonic.
In this case, the \emph{type} of $M$ is the same as the type of $B$.
Now Lemmas~\ref{Lem:AllYourBaseAreBelongToUs} and~\ref{Lem:LensPrismBase} allow us to coarsely name the elliptic manifolds.

\begin{proposition}
\label{Prop:Names}
Every elliptic manifold is either a lens space, a prism manifold, or a platonic manifold. \qed
\end{proposition}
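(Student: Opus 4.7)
The plan is to assemble \refprop{Names} directly from the three results immediately preceding it. Given an elliptic manifold $M$, I would first invoke \refthm{Seifert} to equip $M$ with a Seifert fibred structure $\calF$, and form the base orbifold $B = M/S^1$. Because the kernel of the surjection $\pi_1(M) \to \pi_1^\orb(B)$ is cyclic (generated by the generic fibre) and $\pi_1(M) \isom \Gamma$ is finite by ellipticity, the quotient $\pi_1^\orb(B)$ is also finite.

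Next I would apply \reflem{AllYourBaseAreBelongToUs} to the finite-$\pi_1^\orb$ orbifold $B$. This pins $B$ down to exactly one of the five families on the list: cyclic, dihedral, tetrahedral, octahedral, or icosahedral. In the tetrahedral, octahedral, or icosahedral case, $B$ is platonic by definition, and so $M$ is a platonic manifold in the sense just defined.

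In the two remaining cases, $B$ is cyclic or dihedral, and I would invoke \reflem{LensPrismBase}. This lemma concludes that $M$ is one of $S^2 \cross S^1$, a lens space, or a prism manifold. The single observation still required is that the $S^2 \cross S^1$ option cannot actually occur here: $\pi_1(S^2 \cross S^1) \isom \ZZ$ is infinite, whereas $\pi_1(M)$ was already known to be finite. Hence $M$ is a lens space or a prism manifold.

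There is essentially no main obstacle; the work has been done in the preceding lemmas, and the only thing to verify by hand is ruling out $S^2 \cross S^1$ from the conclusion of \reflem{LensPrismBase}, which is immediate from finiteness of $\pi_1(M)$.
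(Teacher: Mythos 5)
Your proposal is correct and follows the paper's intended argument exactly: it combines \refthm{Seifert}, \reflem{AllYourBaseAreBelongToUs}, and \reflem{LensPrismBase}, with the extra (necessary) observation that $S^2 \cross S^1$ is excluded because $\pi_1(M)$ is finite. The paper regards this as immediate from those preceding results and gives no separate proof, so there is nothing to diverge from.
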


To pin down an elliptic manifold precisely, we must discuss how the Seifert invariants of the critical fibres interact with the \emph{Euler number}.
Suppose, for this paragraph, that $M$ is a platonic manifold.
Suppose the Seifert invariants of the critical fibres $C_i$ are $q_i/p_i$, for $i = 1, 2, 3$.
Following Orlik~\cite[page~91]{Orlik72}, there is an integer $q$ so that $\pi_1(M)$ has the following presentation.
\[
\pi_1(M) = \group{a_1, a_2, a_3, b}{[a_i,b] = 1, a_i^{p_i} = b^{-q_i},
a_1 a_2 a_3 = b^q}
\]
Here $\subgp{b}$ is the central subgroup and is generated by a generic fibre in $M$.
Killing $b$ gives $\pi_1^\orb(B)$.
The integer $q$ is determined by the \emph{Euler number} of $(M, \calF)$:
\[
e(M, \calF) = - q - \sum \frac{q_i}{p_i}.
\]
In a small abuse of notation we call the data $(q, q_1/p_1, q_2/p_2, q_3/p_3)$ the \emph{Seifert invariants} of $M$.
In the terminology of Orlik~\cite[page~88]{Orlik72}, this manifold would be denoted by
\[
\{ q, (o_1, 0); (q_1,p_1), (q_2,p_2), (q_3,p_3) \}.
\]

Suppose $U_i$ is a fibred neighbourhood of the critical fibre $C_i \subset M$.
If we change the framing of $\bdy U_i$, say by replacing $q_i/p_i$ with $(q_i + p_i)/p_i$, then in order to keep the Euler number the same we must also change the framing about a generic fibre, by replacing $q$ with $q - 1$.
We call this process \emph{reframing}.

Note also that there is an orientation-reversing homeomorphism between the manifolds having Seifert invariants
\[
(q,q_1/p_1, \dots, q_n/p_n)
\quad \mbox{and} \quad
(-q,-q_1/p_1, \dots, -q_n/p_n).
\]
Since we are not considering our manifolds as oriented in this paper, we view these two Seifert fibre spaces as equivalent.

We now record a very useful observation, essentially due to Seifert and Threlfall~\cite[page~573]{SeifertThrelfall33}.

\begin{proposition}
\label{Prop:PlatonicHomeo}
Suppose $M$ is a platonic manifold with $B = M/S^1 = S^2(p_1, p_2, p_3)$.
Then the Seifert invariants of $M$ can be recovered, up to reframing and reversing orientation, from $B$ and the order of $H_1(M)$.
\end{proposition}

\begin{proof}
Abelianizing the fundamental group shows $H_1(M)$ has order
\[
\big| q p_1 p_2 p_3 + q_1 p_2 p_3 + p_1 q_2 p_3 + p_1 p_2 q_3 \big|.
\]
A bit of modular arithmetic
shows the data $q, q_1, q_2, q_3$ can be recovered, up to reframing and changing all their signs, from $B = M/S^1$ and the order of $H_1(M)$.

For example, suppose that $B$ is $S^2(2,3,4)$.
By applying an orientation-reversing homeomorphism if necessary, we may assume that $e(M, \calF) \leq 0$.
Its Seifert invariants are $(q,1/2,q_2/3,q_3/4)$ where $q_2 \in \{ 1,2 \}$ and $q_3 \in \{1,3 \}$.
Note that
\[
|H_1(M)|= |24 q + 12+ 8q_2 + 6q_3| = 24 q + 12+ 8q_2 + 6q_3
\]
since $e(M, \calF) \leq 0$.
We may compute $q_2$ using the fact that $|H_1(M)| \equiv 8 q_2$ mod $3$, and we may compute $q_3$ using the fact that $|H_1(M)| \equiv 6 q_3 + 12$ mod $8$.
Then finally, $q$ can be determined using the above equality for $|H_1(M)|$.

The cases of the other platonic manifolds are similar.
\end{proof}

\begin{remark}
It is evident that the procedure given in the above proof runs in polynomial time as a function of the number of digits of $|H_1(M)|$.
\end{remark}

\begin{lemma}
\label{Lem:Platonic}
Suppose $M$ is a platonic manifold with base orbifold $B = M/S^1$.
The map $\pi_1(M) \to \pi_1^\orb(B)$ is the only surjection of the fundamental group of $M$ onto a platonic group, up to post-composing by an automorphism of $\pi_1^\orb(B)$.
\end{lemma}

\begin{proof}
Suppose $G = \pi_1(M)$ and $\calP = \pi_1^\orb(B)$.
Let $\rho \from G \to \calP$ be the associated surjection.
Now suppose $\rho' \from G \to \calP'$ is any surjection to a platonic group.
Since $\calP'$ has trivial centre, the map $\rho'$ kills the fibre of $M$.
Thus $\rho$ factors $\rho'$.
However, there is no nontrivial surjective map between distinct platonic groups.
Thus $\rho = \rho'$, perhaps after applying an automorphism of $\calP$.
\end{proof}

\begin{proposition}
\label{Prop:PrismCriterion}
Suppose $q/p$ and $s/r$ are Farey neighbours, with $q \geq 2$.
An orientable three-manifold $M$ is homeomorphic to the prism manifold $P(p,q)$ if and only if $M$ is double covered by the lens space $L = L(2pq, ps+qr)$ and the subgroup of $\pi_1(L)$ fixed by the deck group action has order $2p$.
\end{proposition}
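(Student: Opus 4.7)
The forward implication is supplied by \reflem{PrismCover}, so the real work lies in the converse. Let $M$ be closed and orientable, double covered by $L \isom L(2pq, ps+qr)$, with the deck action fixing a subgroup of order $2p$ in $\pi_1(L) \isom \ZZ_{2pq}$. Since the universal cover of $L$, and therefore of $M$, is $S^3$, the manifold $M$ is elliptic with $|\pi_1(M)| = 4pq$; by \refprop{Names} it is a lens space, a prism manifold, or a platonic manifold. My plan is to exclude the first and third possibilities, and then identify the resulting prism manifold via an abelianisation count.

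If $M$ were a lens space then $\pi_1(M)$ would be abelian, so the conjugation action of the deck group $\pi_1(M)/\pi_1(L) \isom \ZZ_2$ on $\pi_1(L)$ would be trivial and the fixed subgroup would be all of $\pi_1(L)$; since $q \geq 2$, the order $2pq$ exceeds $2p$, contradicting the hypothesis. If instead $M$ were platonic, with associated platonic group $\calP \in \{A_4, S_4, A_5\}$ and cyclic central kernel $Z \subset \pi_1(M)$ of the projection $\pi_1(M) \to \calP$, then the cyclic index-two normal subgroup $H = \pi_1(L)$ has image $HZ/Z$ in $\calP$ equal either to $\calP$ or to an index-two subgroup. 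In the first case $\calP$ would be a cyclic quotient of $H$, impossible as the platonic groups are non-abelian. In the second case $Z \subset H$, so $H/Z$ is an index-two subgroup of $\calP$; since $A_5$ is simple and $A_4$ has no subgroup of order six, only $\calP = S_4$ survives, whereupon $H/Z \isom A_4$ is non-cyclic, contradicting cyclicity of $H$.

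It follows that $M \isom P(p', q')$ with $4p'q' = |\pi_1(M)| = 4pq$. To pin down $p = p'$ I will compute $|H_1(M)|$ in two ways. Writing the deck action on $\pi_1(L) \isom \ZZ_{2pq}$ as multiplication by some $c$ with $c^2 \equiv 1 \pmod{2pq}$ and $\gcd(c-1, 2pq) = 2p$, and realising $\pi_1(M)$ as the extension $1 \to \ZZ_{2pq} \to \pi_1(M) \to \ZZ_2 \to 1$, its commutator subgroup is generated by the commutators $[\tau, x] = (c-1)x$ for $\tau$ a lift of the non-identity deck element, so equals $(c-1)\ZZ_{2pq}$, of order $2pq/\gcd(c-1, 2pq) = q$. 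Hence $|H_1(M)| = 4pq/q = 4p$. Abelianising the presentation in \reffac{FundPrism} yields $|H_1(P(p', q'))| = 4p'$, giving $p' = p$ and therefore $q' = q$; so $M \isom P(p, q)$.

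The hardest step will be ruling out the platonic case: the fundamental group of a platonic manifold is generally a cyclic central extension of $\calP$ rather than $\calP$ itself, so the argument about the cyclic index-two subgroup $H$ must be routed through the kernel $Z$ before the non-cyclicity of the platonic groups — and of $A_4 \subset S_4$ in particular — can be invoked.
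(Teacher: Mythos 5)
Your proof is correct and follows the same overall architecture as the paper's: cite \reflem{PrismCover} for the forward direction, establish ellipticity via the spherical space form conjecture, exclude the lens-space and platonic cases, and then pin down the prism coefficients. The lens-space exclusion and the platonic exclusion are the same ideas as in the paper (abelianness of $\pi_1$ would force the deck action to be trivial; a platonic group has no cyclic subgroup of index at most two), although your platonic argument is more circuitous than necessary --- you route through the kernel $Z$, whereas it suffices to observe directly that the image of the cyclic index-two subgroup $\pi_1(L)$ under any surjection $\pi_1(M)\to\calP$ is a cyclic subgroup of $\calP$ of index at most two, which none of $A_4$, $S_4$, $A_5$ possesses. (Your insistence at the end that one ``must'' go through $Z$ is an overstatement.)

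Where you genuinely depart from the paper is in identifying the coefficients once $M$ is known to be a prism manifold. The paper simply re-invokes \reflem{PrismCover}, asserting that $p$ and $q$ can be read off from the order and index of the fixed subgroup. This implicitly requires that the fixed-subgroup order is an intrinsic invariant of $M$, i.e.\ that it does not depend on which cyclic index-two subgroup (if there happens to be more than one) furnishes the double cover. Your computation of $|H_1(M)|$ from the extension $1\to\ZZ_{2pq}\to\pi_1(M)\to\ZZ_2\to 1$ --- showing $[\pi_1(M),\pi_1(M)]=(c-1)\ZZ_{2pq}$ has order $q$, hence $|H_1(M)|=4p$, and matching this against $|H_1(P(p',q'))|=4p'$ from the abelianisation of \reffac{FundPrism} --- sidesteps this issue entirely, since $|H_1(M)|$ is intrinsic. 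This is a cleaner and more self-contained way to finish, and it supplies a justification that the paper's terse final sentence leaves implicit.
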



\begin{proof}
The forward direction is the statement of \reflem{PrismCover}.

For the backwards direction let $G = \pi_1(L) \isom H_1(L)$.
Since $M$ is double covered by a lens space, the resolution of the spherical space form conjecture~\cite{Perelman1, Perelman2, Perelman3} implies $M$ is elliptic.
Since $\pi_1(M)$ has an index two cyclic subgroup, namely $G$, we deduce that $\pi_1(M)$ cannot surject a platonic group.

\refprop{Names} implies $M$ is either a lens space or a prism manifold.
However, in any double cover of a lens space, the deck group acts trivially on homology.
We deduce that $M$ is a prism manifold.
According to \reflem{PrismCover}, the coefficients $p$ and $q$ can be recovered from the order and index, respectively, of the subgroup of $G$ that is fixed by the action of the deck group.
\end{proof}

\begin{proposition}
\label{Prop:PlatonicCriterion}
Suppose $\calP$ is one of the three platonic groups $A_4$, $S_4$ or $A_5$.
Let $d$ be the order of $\calP$.
An orientable three-manifold $M$ is homeomorphic to a platonic manifold of type $\calP$ if and only if it is $d$--fold covered by a lens space $L$ with deck group isomorphic to $\calP$.
Moreover, the Seifert invariants of $M$ can be recovered from the order of $H_1(M)$.
\end{proposition}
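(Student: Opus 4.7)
The plan is to mirror the three-part structure of the proof of \refprop{PrismCriterion}: handle the forward direction using the Seifert structure directly, handle the backward direction by invoking geometrisation and then ruling out the non-platonic cases via an algebraic obstruction coming from $\calP$, and finally deduce the Seifert-invariant statement from \refprop{platonicHomeo}.

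For the forward direction, I would start from a platonic manifold $M$ of type $\calP$. As noted in \refsec{Elliptic}, the Seifert fibration gives a surjection $\pi_1(M) \to \pi_1^\orb(B) = \calP$ whose kernel is cyclic, generated by a generic fibre. Let $L \to M$ be the regular cover associated with this kernel: it has deck group $\calP$ and degree $d$, and it is itself Seifert fibred over the orbifold cover of $B$ with trivial isotropy, namely $S^2$. Since $\pi_1(L)$ is finite (as a subgroup of the finite group $\pi_1(M)$), \reflem{LensPrismBase} forces $L$ to be a lens space rather than $S^2 \times S^1$.

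The backward direction is the substantive step. Assume $M$ is $d$-fold covered by a lens space $L$ with deck group $\calP$. The universal cover of $M$ is then $S^3$, so by the resolution of the spherical space form conjecture \cite{Perelman1, Perelman2, Perelman3} the manifold $M$ is elliptic, and \refprop{Names} presents it as a lens space, a prism manifold, or a platonic manifold. The hypothesis yields a surjection $\pi_1(M) \to \calP$ with cyclic kernel $\pi_1(L)$. If $M$ were a lens space, $\pi_1(M)$ would be cyclic while $\calP$ is non-abelian, an immediate contradiction. The prism-manifold case is the main obstacle: here I would apply \reflem{PrismCover}, which implies that $\pi_1$ of a prism manifold contains a cyclic subgroup of index $2$; the image of this subgroup under $\pi_1(M) \to \calP$ would then be a cyclic subgroup of $\calP$ of index at most $2$. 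However, the largest cyclic subgroups of $A_4$, $S_4$, and $A_5$ have orders $3$, $4$, and $5$ respectively, so none admits a cyclic subgroup of index at most $2$, ruling out the prism case. Hence $M$ is platonic of some type $\calP'$; applying \reflem{platonic} to the two surjections of $\pi_1(M)$ onto platonic groups (namely the deck-group map onto $\calP$ and the orbifold map onto $\calP' = \pi_1^\orb(M/S^1)$) forces $\calP \cong \calP'$.

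Finally, the claim about Seifert invariants is an immediate corollary of \refprop{platonicHomeo}: the type $\calP$ pins down $B = M/S^1$ as the unique two-orbifold $S^2(p_1, p_2, p_3)$ with orbifold fundamental group $\calP$, and then $|H_1(M)|$ together with $B$ determines the remaining Seifert data via the modular-arithmetic computation already carried out there. The only delicate step in the whole argument is the prism-manifold exclusion; every other ingredient is a direct appeal to material proved earlier in the section.
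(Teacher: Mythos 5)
Your proof is correct and follows essentially the same strategy as the paper: forward direction via the central fibre subgroup, backward direction via geometrisation and \refprop{Names}, type identification via \reflem{platonic}, and Seifert invariants via \refprop{platonicHomeo}. The main divergences are cosmetic. In the forward direction the paper simply observes that the cover is elliptic with cyclic $\pi_1$ and is therefore a lens space, whereas you re-derive this by tracking the lifted Seifert fibration over the universal orbifold cover $S^2$ and then citing \reflem{LensPrismBase}; both are fine, though the paper's is shorter. In the backward direction the paper states tersely ``we deduce $M$ is not a lens space or prism manifold'' from the fact that $\pi_1(M)$ is a cyclic extension of $\calP$; you spell out the argument — non-abelian image rules out the lens case, and for the prism case you extract the index-two cyclic subgroup from \reflem{PrismCover} and note that its image would be a cyclic subgroup of $\calP$ of index at most two, which $A_4$, $S_4$, $A_5$ do not possess. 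This is exactly the algebraic obstruction the paper itself uses in the proof of \refprop{PrismCriterion}, so you have correctly identified the implicit argument. No gaps.
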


\begin{proof}
For the forward direction, recall the fundamental group of $M$ has the form
\[
G = \pi_1(M) = \group{a_1, a_2, a_3, b}{[a_i,b] = 1, a_i^{p_i} =
  b^{-q_i}, a_1 a_2 a_3 = b^q}.
\]
The subgroup $\subgp{b}$ is central.
The corresponding cover is an elliptic manifold with cyclic fundamental group, and so is a lens space.
The deck group is isomorphic to $G/\subgp{b} \isom \calP$.

For the backwards direction, since $M$ is covered by a lens space, the spherical space form conjecture (which is a consequence of the geometrisation conjecture, proved by Perelman \cite{Perelman1, Perelman2, Perelman3}) implies that $M$ is elliptic.
Since the degree of the cover equals the order of the deck group the covering is normal.
Thus $\pi_1(M)$ is a cyclic extension of $\calP$.
We deduce $M$ is not a lens space or prism manifold.

\refprop{Names} implies $M$ is a platonic manifold.
\reflem{Platonic} implies $M$ has the type of $\calP$.
Finally, \refprop{PlatonicHomeo} states that the Seifert invariants can be recovered from the order of $H_1(M)$.
\end{proof}

\section{Certifying \texorpdfstring{$T^2 \times I$}{T2 x I} and \texorpdfstring{$K^2 \twist I$}{K2 x I}}
\label{Sec:IBundles}

As usual, we assume that any three-manifold $M$ is given via a finite triangulation $\calT$.
The decision problem \textsc{Recognising $T^2 \times I$} takes $\calT$ as its input and it asks whether $M$ is homeomorphic to $T^2 \times I$.
The decision problem \textsc{Recognising $K^2 \twist I$} is defined similarly.
Both are dealt with by Haraway and Hoffman~\cite[Theorem~3.6]{HarawayHoffman19}.

\begin{theorem}
\label{Thm:RecognisingBundles}
The problems \textsc{Recognising $T^2 \times I$} and \textsc{Recognising $K^2 \twist I$} are in \NP.  \qed
\end{theorem}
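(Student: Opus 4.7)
The plan is to exhibit, for each of the two manifolds in turn, a polynomial-size normal surface certificate whose complement is a simpler three-manifold already known to be recognisable in \NP, and then to verify that the re-gluing combinatorics reproduce the target bundle. The building blocks are that three-ball recognition lies in \NP\ (a consequence of three-sphere recognition due to Schleimer and Ivanov) and that solid torus recognition lies in \NP\ (Ivanov), both mentioned earlier in the introduction. The strategy exploits the fact that both $T^2 \times I$ and $K^2 \twist I$ admit very simple decompositions along vertical annuli in their natural Seifert fibrations.

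For \textsc{Recognising $T^2 \times I$}, the certificate consists of (i) a verification that $\bdy M$ is two tori; (ii) a pair of disjoint properly embedded essential normal annuli $A_1, A_2$, each with one boundary component on each torus of $\bdy M$; and (iii) an \NP\ certificate that $M \cut (A_1 \cup A_2)$ is a three-ball. Each normal annulus is encoded by its vertex coordinates, which have polynomial bit-size. The verifier additionally inspects the combinatorics of how the four scar rectangles sit on the boundary of the resulting three-ball, confirming that reassembling them yields $T^2 \times I$ rather than some other manifold with two torus boundary components admitting such a decomposition.

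For \textsc{Recognising $K^2 \twist I$}, the certificate consists of (i) a check that $\bdy M$ is a single torus; (ii) a normal essential annulus $A$ with $\bdy A \subset \bdy M$; and (iii) an \NP\ certificate that $M \cut A$ is a solid torus. The verifier records the slope of $\bdy A$ on $\bdy M$ and the slopes of the two scar annuli on the boundary of the resulting solid torus, and checks that the meridian of this solid torus crosses each scar exactly twice. This double-crossing is precisely the signature of the twisted $I$-bundle structure, and it distinguishes $K^2 \twist I$ from the lens spaces and other Seifert fibered spaces which also admit a single-annulus decomposition into a solid torus.

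The main obstacle is step (iii) in each case: the existence of a cutting annulus with ball or solid torus complement is by itself a rather weak condition, since many Seifert fibered spaces satisfy it. The homeomorphism type of $M$ is pinned down only by the slopes of the cutting annuli together with the meridional data on the resulting pieces. Tracking these slopes through normal surface operations requires care, but the relevant linear-algebraic data on $H_1$ of the boundary tori are of polynomial bit-size, and a short appeal to the classification of Seifert fibered spaces with base a disc with at most two exceptional fibres (for $K^2 \twist I$) or an annulus (for $T^2 \times I$) completes the identification. Since each verification step lies in polynomial time or in \NP, and the certificates themselves are of polynomial size, both recognition problems lie in \NP.
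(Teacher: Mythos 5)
The paper itself dispatches this theorem by citing Haraway--Hoffman \cite[Theorem~3.6]{HarawayHoffman19} (together with a patch to a gap in the underlying sutured-manifold result and two alternative certificates for $T^2\times I$: the Dehn-filling criterion of Haraway, and a crushing/sweep-out argument); for $K^2\twist I$ it passes to the orientation double cover and reuses the $T^2\times I$ certificate. Your proposal takes a different route --- cut along essential normal annuli and verify the resulting pieces --- but the decomposition you propose for $T^2\times I$ is topologically impossible. If $A_1$ and $A_2$ are disjoint essential annuli in $T^2\times I$, each with one boundary circle on each boundary torus, then (after isotopy) $A_i = \alpha_i\times I$ for essential simple closed curves $\alpha_i\subset T^2$ with $\alpha_1\cap\alpha_2=\emptyset$, and disjoint essential curves in a torus are parallel. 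Hence $A_1$ and $A_2$ are parallel vertical annuli, and $(T^2\times I)\cut(A_1\cup A_2)$ is two solid tori, never a ball. To get a ball one would need the $\alpha_i$ to intersect once, which forces the annuli to intersect; so the certificate you describe never exists, even when $M$ really is $T^2\times I$. This is a fatal flaw in that half of the argument.

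The $K^2\twist I$ half has a related problem: cutting along the vertical annulus over the two-sided fibre $\beta$ of $K = S^1\twist S^1$ does give a solid torus $A'\times I$ (with $A'$ an annulus), but the compressing disc of that solid torus meets each scar annulus in exactly \emph{one} arc, not two, so ``the meridian crosses each scar exactly twice'' is the wrong signature. More generally, your ``short appeal to the classification of Seifert fibred spaces'' is doing all the work that actually needs to be proved: one must show that the specific slope/gluing data extractable from the normal-surface certificate pins down the homeomorphism type and is verifiable in polynomial time, and one must also establish weight bounds on the required normal annuli (the paper appeals to \cite[Lemma~8.5, Theorem~8.3, Theorem~9.3]{Lackenby:Knottedness} for exactly these points) so that $M\cut A$ can be encoded with polynomially many handles. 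None of these steps is routine, and your sketch does not supply them.
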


There is a subtle point to note here.
Haraway and Hoffman, for their certificate for \textsc{Recognising $T^2 \times I$}, rely on~\cite[Theorem~12.1]{Lackenby:Knottedness}.
There, given a handle structure $\calH$ of a sutured manifold $(M, \gamma)$, the theorem provides a certificate that $(M, \gamma)$ is a product sutured manifold.
In our setting, we would simply check that $M$ had two toral boundary components $T_0$ and $T_1$, say, and would assign the sutured manifold structure $R_-(M) = T_0$ and $R_+(M) = T_1$.
Then~\cite[Theorem 12.1]{Lackenby:Knottedness} provides a certificate that $(M, \emptyset)$ is a product sutured manifold, which is equivalent to the statement that $M$ is homeomorphic to $T^2 \times I$.

However, there is a gap in the published proof of~\cite[Theorem~12.1]{Lackenby:Knottedness}.
There one tacitly assumes that $\gamma$ is non-empty.
Nevertheless, there is a straightforward fix.
We take as the certificate a non-separating annulus $A$ in normal form with respect to the triangulation $\calT$ and with weight at most an exponential function of the number of tetrahedra of $\calT$.
Such an annulus exists by~\cite[Lemma~8.5, Theorem~8.3]{Lackenby:Knottedness}. Then we form a handle structure on $M \cut A$ where the number of handles is bounded above by a linear function of the number of tetrahedra of $\calT$, using~\cite[Theorem~9.3]{Lackenby:Knottedness}.
This is a product sutured manifold if and only if $M$ was a product; however  now $M \cut A$ has a non-empty collection of sutures and so the proof of~\cite[Theorem~12.1]{Lackenby:Knottedness} applies.

We also note that there are alternative methods of certifying $T^2 \times I$.
One is to use the following result.

\begin{theorem}
Let $M$ be a compact orientable three-manifold with two boundary components, both of which are tori.
Let $s_1, s_2, s_3$ be slopes on one of the boundary components $T$ of $M$ that represent the three non-trivial elements of $H_1(T; \ZZ/2\ZZ)$.
Then $M$ is homeomorphic to $T^2 \times I$ if and only if the three manifolds obtained by Dehn filling along $s_1$, $s_2$, and $s_3$ are all homeomorphic to a solid torus. \qed
\end{theorem}

The proof is given inside the proof of~\cite[Theorem~11]{Haraway20}.
Thus we can certify that $M$ is homeomorphic to $T^2 \times I$ as follows.
We check that $M$ has two boundary components, both of which are tori.
Let $T$ be one of these.
We pick embedded normal one-manifolds $C_1$, $C_2$ and $C_3$ that represent the three non-trivial elements of $H_1(T; \ZZ/2\ZZ)$,
with the property that each $C_i$ intersects each triangle of $T$ in at most one normal arc.
Each $C_i$ consists of some parallel essential simple closed curves in $T$, plus possibly some curves that bound discs in $T$.
Let $C_i'$ be one of these essential curves.
The curves $C_1', C_2', C_3'$ again represent the three non-trivial elements of $H_1(T; \ZZ/2\ZZ)$.
We form triangulations for the three-manifolds obtained by Dehn filling along the slopes of $C_1', C_2', C_3'$ as follows.
We barycentrically subdivide the triangulation of $M$ once, so that each $C'_i$ is simplicial. Then
we attach (simplicially) a triangulated disc to $M$ along $C'_i$.
Finally we attach a triangulated three-cell to complete the Dehn filling.
Each of the resulting triangulations has at most $56$ times as many tetrahedra as the original, given, triangulation of $M$.
The final piece of our certificate is that these three triangulated three-manifolds are solid tori; for this we use~\cite[Corollary~2]{Ivanov08}.

There is even a third method of providing a certificate for \textsc{Recognising $T^2 \times I$}, going back to Schleimer's thesis~\cite[Chapter~6]{Schleimer01}.
Let $M$ be the given three-manifold equipped with the triangulation $\calT$.
We first check that $M$ has the homology of $T^2$.
As in~\cite[Theorem~15.1]{Schleimer11}, the first third of the certificate is a sequence $(\calT_i, v(S_i))_{i = 0}^n$ where:
\begin{itemize}
\item
$\calT_0 = \calT$,
\item
$v(S_i)$ is the normal vector of $S_i$, a fundamental non-vertex linking normal two-sphere in $\calT_i$ (for $i < n$),
\item
$v(S_n)$ is the zero-vector,
\item
$\calT_{i+1}$ is the result of \emph{crushing} $\calT_i$ along $S_i$~\cite[Section~13]{Schleimer11}, and
\item
$\calT_n$ is \emph{zero-efficient}~\cite[Definition~4.10]{Schleimer11}.
\end{itemize}
We next certify three-sphere components of $\calT_n$ and discard them to obtain $\calT'$.
The final third of the certificate follows the plan of~\cite[Theorem~6.2.1]{Schleimer01}.
We find a list $(T_i)_{i = 0}^{2n}$ of disjoint tori in $\calT'$.
Each even torus $T_{2k}$ (except for the first and last) is almost normal and \emph{normalises via isotopy} to the normal tori $T_{2k \pm 1}$.
The union $T_0 \cup T_{2n}$ is the frontier of the boundary of a small regular neighbourhood of $\bdy M$, taken in $\calT$.
We require that $T_0$ and $T_{2n}$ normalise via isotopy to $T_1$ and $T_{2n - 1}$, respectively.
That the $T_i$ exist and have controlled weight is proved in~\cite[Chapter~6]{Schleimer01}.
The normalisations are produced in polynomial time using the algorithm of~\cite[Theorem~12.1]{Schleimer11}.

There are several possible certificates for \textsc{Recognising $K^2 \twist I$}.
One is to exhibit a double cover $\cover{M}$ of the manifold $M$, to certify that $\cover{M}$ is $T^2 \times I$ and to check that $M$ has a single torus boundary component.
Specifically, the certificate is as follows:
\begin{enumerate}
\item a triangulation $\cover{\calT}$ of a three-manifold $\cover{M}$;
\item a simplicial involution $\phi$ of $\cover{\calT}$ with no fixed points;
\item a combinatorial isomorphism between $\cover{\calT} / \phi$ and $\calT$;
\item a certificate that $\cover{M}$ is homeomorphic to $T^2 \times I$.
\end{enumerate}
We now check that $\bdy M$ is a single torus and verify the given certificate.
This suffices because $K^2 \twist I$ is the unique orientable three-manifold with a single toral boundary component and that is double covered by $T^2 \times I$~\cite[Theorem~10.5]{Hempel}.

\section{Certificates for elliptic manifolds}
\label{Sec:CertificateElliptic}

In this section, we give our method for certifying whether a closed three-manifold is an elliptic manifold and, if it is, then which elliptic manifold it is.
That is, we prove the following.

\begin{restate}{Theorem}{Thm:Elliptic}
The problem \textsc{Elliptic manifold} lies in \NP.
\end{restate}

\begin{restate}{Theorem}{Thm:NamingElliptic}
The problem \textsc{Naming elliptic} lies in \FNP.
\end{restate}

In what follows we assume that the three-manifold $M$ is given via a finite triangulation $\calT$.

\subsection{Some problems in \NP}

In our proofs of Theorems~\ref{Thm:Elliptic} and~\ref{Thm:NamingElliptic}, we rely on the fact that the following decision problems lie in \NP.
\begin{enumerate}
\item \textsc{Recognising $S^3$} -- \cite[Theorem~15.1]{Schleimer11} and~\cite[Theorem~2]{Ivanov08}.
\item \textsc{Recognising $D^2 \cross S^1$} -- \cite[Corollary~2]{Ivanov08}.
\item \textsc{Recognising $T^2 \times I$} -- \cite[Theorem~3.6]{HarawayHoffman19}.
\item \textsc{Recognising $K^2 \twist I$} -- \cite[Theorem~3.6]{HarawayHoffman19}.
\end{enumerate}

\subsection{Some problems in {\textsc{P}}}

In what follows we rely on the algorithm of Kannan and Bachem~\cite{KannanBachem79} which places a given integer matrix into Smith normal form. 
The running time and also the bit-size of the output are bounded above by polynomial functions of the bit-size of the input.

We also use the fact, due to Burton~\cite[Corollary~8]{Burton11}, that it is decidable in polynomial time whether two triangulations of compact three-manifolds are combinatorially isomorphic. Burton proved this result by creating from a triangulation of a compact three-manifold an \emph{isomorphism signature}, which gives a labelling of the simplices of the triangulation, and it has the property that two triangulations are combinatorially isomorphic if and only if their isomorphism signatures are equal \cite[Theorem 7]{Burton11}. Furthermore, his algorithm also computes the automorphism group of a single triangulation in polynomial time.

We now provide various types of certificates for various types of elliptic manifolds.
In each case, we give the certificate and explain how it is verified.
The time to complete the verification is bounded above by a polynomial function of $|\calT|$, the number of tetrahedra in $\calT$.
Finally, we prove that such a certificate exists if and only if $M$ is the corresponding type of elliptic manifold.

In our certificates, we give more information than is strictly necessary. We do this in order to make the certificates easier to interpret by the reader of this paper. Some parts of the certificates could be removed, at the cost of requiring the checker of the certificate to do more work. We place an asterisk against those parts of the certificate that seem to crucial given the current state of knowledge.

\subsection{Three-sphere}

Recognising the three-sphere is discussed immediately above.

\subsection{Real projective space}
\label{Sec:RP3}

The certificate here is:
\begin{enumerate}
\item[(1)]
a triangulation $\cover{\calT}$ of a three-manifold $\cover{M}$;
\item[$\ast$(2)]
a certificate that $\cover{M}$ is the three-sphere;
\item [(3)]
a simplicial involution $\phi$ of $\cover{\calT}$ that has no fixed points; and
\item [(4)]
a simplicial isomorphism between $\cover{\calT} / \phi$ and $\calT$.
\end{enumerate}
The first, third, and fourth parts may be verified in polynomial time;
we omit the details.


By the spherical space form conjecture~\cite{Perelman1, Perelman2, Perelman3} or \cite{Livesay60}, the manifold $M$ has such a certificate if and only if it is $\RP^3$.
The output of the algorithm records whether the certificate has been verified, and in the function case, it also gives the Seifert data $(0, 1/2)$.

Only (2) appears to be necessary. Instead of (1), (3) and (4) being part of the certificate, one can proceed as follows. The verifier is given a triangulation $\calT$ of a three-manifold $M$. Using the algorithm of Kannan and Bachem, the verifier can compute $H_1(M)$ in polynomial time and check that it is $\mathbb{Z}/2\mathbb{Z}$. The verifier can also construct the unique non-trivial homomorphism $\pi_1(M) \rightarrow \mathbb{Z}/2\mathbb{Z}$, and hence can build the unique double cover $\cover{M}$ and its lifted triangulation $\cover{\calT}$. Using Burton's isomorphism signature, the verifier can ensure that the simplices of this triangulation are labelled in a canonical fashion. The certificate in (2) applies to this labelled triangulation. Once the certificate in (2) is verified, the verifier can deduce that $M$ is $\RP^3$.

\begin{remark}
There is an interesting generalisation of the above.
Suppose that $M = L(p, q)$ is given via a triangulation $\calT$;
suppose that $p$ is bounded by a uniform polynomial in $|\calT|$.
In this case, the verifier can construct the $p$--fold homology cover in polynomial time and also check that the deck group is cyclic.
They then check a (provided) certificate that the cover is the three-sphere.
This proves that $M$ is a lens space. 
Finally, and more delicately, they verify the coefficient $q$ in polynomial time using~\cite[Theorem~1.1]{Kuperberg18}.
\end{remark}

\subsection{Non-prism non-\texorpdfstring{$\RP^3$}{RP3} lens spaces}
\label{Sec:NonPrismNonRP3Lens}

The certificate is as follows:
\begin{enumerate}
\item [$\ast$(1)]
a simplicial subset $C$ of the one-skeleton of $\calT^{(86)}$;
\item [(2)]
a triangulation $\calT'$ of a three-manifold $X$;
this will in fact be the exterior of $C$;
\item [(3)]
a simplicial isomorphism between $\calT'$ and the triangulation that results from $\calT^{(88)}$ by keeping only those simplices that are disjoint from $C$;
\item [$\ast$(4)]
a certificate establishing that $X$ is a solid torus;
\item [(5)]
simplicial one-chains $\lambda$ and $\mu$ in $\calT^{(88)}$;
these will in fact be meridional and longitudinal curves on $\bdy N(C)$;
\item [(6)]
positive integers $p$ and $q$ with $1 \leq q < p$;
$M$ will in fact be homeomorphic to $L(p,q)$.
\end{enumerate}

To verify this certificate we must check the following:
\begin{enumerate}
\item that $C$ is a circle;
\item that the mapping given in part (3) is a simplicial isomorphism;
\item that the certificate in part (4) shows that $X$ is a solid torus.
\item that $\mu$ and $\lambda$ are one-cycles that generate $H_1(\bdy N(C))$;
\item that $\mu$ is trivial in $H_1(N(C))$ and that $\lambda$ is a generator;
\item that the kernel of the homomorphism $H_1(\bdy N(C)) \to H_1(X)$ (induced by inclusion) is generated by $p \lambda + q \mu$; and
\item that $(p,q) \neq (4p', 2p' \pm 1)$ for some integer $p'$.
\end{enumerate}
Once verified this also, for the function case, records $p$ and $q$. 
More specifically, since we are requiring that the output is the Seifert data for $M$, then this is $(0,q/p)$.

If we can verify the certificate, then $M$ is the manifold $L(p,q)$.
Conversely, suppose $M$ is the lens space $L(p, q)$ and is neither a prism manifold nor $\RP^3$.
Then $(p,q) \neq (4p', 2p' \pm 1)$ by \reflem{PrismIsLens}.
By \refthm{LensSpaceCore}, the subdivision $\calT^{(86)}$ contains a simplicial core curve $C$.
Its regular neighbourhood $N(C)$ is simplicial in $\calT^{(88)}$.
An essential simple closed curve in $\bdy N(C)$ that lies in the link of some vertex of $C$ gives a simplicial meridian $\mu$.
An embedded simplicial arc in $\bdy N(C)$ joining opposite sides of $\mu$ can then be closed up to form a simplicial longitude $\lambda$.
Thus, the required certificate exists.

It can be verified in polynomial time, as a function of the number of tetrahedra in $\calT$, for the following reasons.
The subdivision $\calT^{(88)}$ contains $24^{88}$ times as many tetrahedra as $\calT$.
Thus, steps (1)--(3) in the verification can be achieved in polynomial time.
For (4)--(6), we note that the boundary maps in the chain groups for $N(C)$ and $\bdy N(C)$ can be expressed as integer matrices, where the number of rows and columns is bounded by a constant multiple of the number of tetrahedra in $\calT^{(88)}$ and where each column has $L^1$ norm at most $3$.
Thus, (4)--(6) can be verified in polynomial time using linear algebra, as in the Kannan-Bachem algorithm.

One can alternatively just use (1) and (4) in the certificate. Instead of (2) and (3), the verifier can compute the triangulation for the exterior $X$ of $C$ obtained from $\calT^{(88)}$ by keeping only those simplices that are disjoint from $C$. Furthermore, by using Burton's isomorphism signature, the verifier can give the simplices of this triangulation a canonical labelling. 
Furthermore, the verifier can build the simplicial one-chains $\lambda$ and $\mu$ and compute the integers $p$ and $q$, using linear algebra.
Note that, since they are computed in polynomial time, $p$ and $q$ necessarily have polynomial bit-size.

\subsection{Prism lens spaces}
\label{Sec:PrismLens}

Here, the certificate is either as in the case of non-prism non-$\RP^3$ lens spaces (but where the integers $p$ and $q$ do satisfy $(p, q) = (4p', 2p' \pm 1)$ for some positive integer $p'$) or the following:
\begin{enumerate}
\item [$\ast$(1)] a simplicial subset $C$ of the one-skeleton of $\calT^{(86)}$;
\item [(2)] a triangulation $\calT'$ of a three-manifold $X$;
this will in fact be the exterior of $C$;
\item [$\ast$(3)] a certificate that $X$ is homeomorphic to $K^2 \twist I$;
\item [(4)] a triangulation $\cover{\calT}$ of a three-manifold $\cover{M}$;
this will in fact be the double cover of $M$ for which the inverse image of $K^2\twist I$ is a copy of $T^2 \times I$;
\item [(5)] a simplicial involution $\phi$ of $\cover{\calT}$ that has no fixed points;
\item [(6)] a simplicial isomorphism between $\calT$ and $\cover{\calT} / \langle \phi \rangle$;
\item [(7)] a simplicial subset $\tilde C$ of $\cover{\calT}^{(86)}$, partitioned into two subsets $\tilde C_1$ and $\tilde C_2$;
this will in fact be the inverse image of $C$ partitioned into its two components;
\item [(8)] a triangulation $\cover{\calT}'$ of a three-manifold $\tilde X'$;
this will be the exterior of $\tilde C_1$;
\item [$\ast$(9)] a certificate that $\tilde X'$ is a solid torus;
\item [(10)] simplicial one-chains $\lambda$ and $\mu$ in $(\cover{\calT}')^{(88)}$;
these will in fact be meridional and longitudinal curves on $\bdy N(\tilde C_1)$; and
\item [(11)] a positive integer $p$;
$\cover{M}$ will in fact be the lens space $L(2p, 1)$.
\end{enumerate}
We check that $C$ is a simple closed curve.
We check that $\calT'$ is the triangulation obtained from $\calT^{(88)}$ by removing those simplices incident to $C$.
We verify the certificate that this is a copy of $K^2 \twist I$, and so on.
These imply that $M$ is a prism manifold.
Once we have checked that $\cover{M}$ is the lens space $L(2p, 1)$, we then check that $\phi_\ast$ acts trivially on $H_1(\cover{M})$. By \reflem{PrismCover}, this implies that $M$ was the manifold $P(p,1)$, which is indeed both a prism manifold and a lens space.

The existence of this certificate is a consequence of \refthm{LensSpaceCurve}.

As in previous cases, some parts of this certificate can be dispensed with. One can alternatively compute $H_1(X)$ using the triangulation $\calT'$ provided in (2) and check that it is isomorphic to $\mathbb{Z} \times (\mathbb{Z}/2\mathbb{Z})$. One can then construct all double covers of $M$.
One of these will be the cover $\cover{M}$ with triangulation $\cover{\calT}$. The inverse image of $C$ in  $\cover{\calT}^{(86)}$ is readily computed. There are two possible choices for the component $C_1$. A triangulation of its exterior is readily computed, as are the one-chains $\lambda$ and $\mu$ and the positive integer $p$.

\begin{remark}
Note that the above three cases, plus the case of $S^3$, provide certificates for all lens spaces.
\end{remark}

\subsection{Prism non-lens spaces}

The certificate:
\begin{enumerate}
\item [(1)] a triangulation $\cover{\calT}$ of a three-manifold $\cover{M}$;
\item [(2)] integers $p, q, r, s$ where $p \geq 1$ and $q > 1$;
\item [$\ast$(3)] a certificate that $\cover{M}$ is the lens space $L(2pq, ps+qr)$;
\item [(4)] a simplicial involution $\phi$ of $\cover{\calT}$ that has no fixed points;
\item [(5)] a simplicial isomorphism between $\calT$ and $\cover{\calT} / \langle \phi \rangle$.
\end{enumerate}

To verify this certificate we must check the following:
\begin{enumerate}
\item that $ps-qr = 1$;
\item that the given certificate establishes that $\cover{M}$ is the lens space $L(2pq, ps+qr)$;
\item that the subgroup of $H_1(\cover{M})$ that is fixed by $\phi$ has order $2p$;
\item that $\phi$ has no fixed points;
\item that the given map between $\calT$ and $\cover{\calT} / \langle \phi \rangle$ is a simplicial isomorphism.
\end{enumerate}

\refprop{PrismCriterion} then implies that $M$ is the prism manifold $P(p,q)$. 
This has two possible Seifert fibrations. 
The algorithm outputs the one with spherical base space, which has Seifert data $(0, 1/2, -1/2, q/p)$.

We need to show that if $M$ is the manifold $P(p,q)$, then there is a certificate as above that can be verified in polynomial time.
\refprop{PrismCriterion} gives that the prism manifold $P(p,q)$ has a double cover $\cover{M}$ that is the lens space $L(2pq, ps + qr)$.
The triangulation $\calT$ lifts to a triangulation $\cover{\calT}$ for $\cover{M}$.
As explained in the previous subsections, there is a certificate that establishes that $\cover{M}$ is $L(2pq, ps+qr)$.
As usual, $2pq$ has at most polynomially many digits (as a function of $|\cover{\calT}|$) as do $p$ and $q$. 
Now, $s/r$ can be any rational number so that $ps - qr = 1$.
However, we may rechoose $r$ and $s$ so that $0 \leq r \leq p$ and hence $|s| \leq |ps| = |1 + qr|$.
Thus, it can be verified in polynomial time that $ps - qr = 1$.
The remaining parts of the certificate may be checked in polynomial time.
In particular, one uses linear algebra to verify that the subgroup of $H_1(\cover{M})$ that is fixed by the covering involution $\phi$ has order $2p$.

Again certain parts of this certificate are not strictly necessary. 
Since the first homology of a prism manifold is generated by at most two elements, it has at most three double covers. 
For each such cover, one can compute its lifted triangulation and its isomorphism signature. One of
these triangulations will be $\cover{\calT}$. 
If we proceed in this way, we do not need to be provided with (4) and (5) in the certificate. 
The order of the subgroup of $H_1(\cover{M})$, fixed by the covering transformation, can be computed in polynomial time; 
this gives $p$. 
Since we have established that lens space recognition is in \FNP, when this is applied to $\cover{M}$, the integers $2pq$ and $ps+qr$ are given as the lens space coefficients; 
hence these do not need to be provided as part of a certificate. 
Once we have $p$ and $2pq$, we can compute $q$. 
As observed above, $s$ and $r$ can be any integers satisfying $ps - qr = 1$ and hence can be computed using the Euclidean algorithm, and can be arranged so that $0 \leq r \leq p$ and $|s| \leq |1 + qr|$. 
Then $ps + qr$ can be computed and we can check whether it equals the second integer that is output from the recognition of $\cover{M}$ as a lens space.

\subsection{Platonic manifolds}

The certificate:
\begin{enumerate}
\item [$\ast$(1)] a triangulation $\cover{\calT}$ of a three-manifold $\cover{M}$;
\item [$\ast$(2)] a group $\calP$ of simplicial isomorphisms of $\cover{\calT}$ that acts freely;
\item [(3)]
a simplicial isomorphism between $\cover{\calT}/\calP$ and $\calT$;
\item [(4)] an isomorphism between $\calP$ and one of the platonic groups $A_4$, $S_4$ or $A_5$;
\item [$\ast$(5)] a certificate that certifies that $\cover{M}$ is a lens space using one of the certificates described above in Sections \ref{Sec:RP3}, \ref{Sec:NonPrismNonRP3Lens} or \ref{Sec:PrismLens}.
\end{enumerate}

To verify this certificate we must check the following:
\begin{enumerate}
\item
that the given isomorphism between $\calP$ and one of the platonic groups is indeed an isomorphism;
\item
that $\calP$ acts freely on $\cover{\calT}$;
\item
that the given simplicial isomorphism between $\cover{\calT}/\calP$ and $\calT$ is indeed a simplicial isomorphism;
\item that the given certificate establishes that $\cover{M}$ is homeomorphic to a lens space;
\item the resulting Seifert invariants of $M$, using \refprop{PlatonicHomeo}.
\end{enumerate}

It is clear that if there is such a certificate, then $M$ is regularly covered by a lens space, with deck group given by the platonic group $\calP$.
Hence, by \refprop{PlatonicCriterion}, $M$ is a platonic manifold with type $\calP$.
Conversely, if $M$ is such a manifold, then by \refprop{PlatonicCriterion}, there is such a finite cover, and hence the certificate exists.
It can be verified in polynomial time.

It is clear that (3) and (4) are not actually needed as part of the certificate. The simplicial isomorphism in (3) can be computed using Burton's algorithm and it can readily be checked that the given group $\calP$ of covering transformations is isomorphic to one of $A_4$, $S_4$ or $A_5$.

An alternative certificate, instead of giving (1) and (2), could give a surjective homomorphism from $\pi_1(M)$ to $\calP$, the desired platonic group.  
The verifier could then build the cover $\cover{\calT}$ and its deck group directly, in polynomial time.

\bibliographystyle{plainurl}
\bibliography{lens_spaces}
\end{document}